\newtheorem{thm}{Theorem}[subsection]
\newtheorem{boasthm}[thm]{Boas' Theorem}
\newtheorem{havithm}[thm]{Haviland's Theorem}
\newtheorem{hausthm}[thm]{Hausdorff's Theorem}
\newtheorem{hamthm}[thm]{Hamburger's Theorem}
\newtheorem{karthm}[thm]{Karlin's Theorem}
\newtheorem{stielthm}[thm]{Stieltjes' Theorem}
\newtheorem{svethm}[thm]{{\v{S}}venco's Theorem}
\newtheorem{richthm}[thm]{Richter's Theorem}
\newtheorem{lem}[thm]{Lemma}
\newtheorem{cor}[thm]{Corollary}
\newtheorem{karcor}[thm]{Karlin's Corollary}
\theoremstyle{definition}
\newtheorem{dfn}[thm]{Definition}
\newtheorem{exm}[thm]{Example}
\newtheorem{exms}[thm]{Examples}
\theoremstyle{remark}
\newtheorem{rem}[thm]{Remark}
\newcommand{\exmsymbol}{\hfill$\circ$}
\newcommand{\nset}{\mathds{N}}
\newcommand{\rset}{\mathds{R}}
\newcommand{\diff}{\mathrm{d}}
\newcommand{\lin}{\mathrm{lin}\,}
\newcommand{\pos}{\mathrm{Pos}}
\newcommand{\sign}{\mathrm{sgn}}
\newcommand{\inter}{\mathrm{int}\,}
\newcommand{\cF}{\mathcal{F}}
\newcommand{\cG}{\mathcal{G}}
\newcommand{\cH}{\mathcal{H}}
\newcommand{\cX}{\mathcal{X}}
\newcommand{\cY}{\mathcal{Y}}
\newcommand{\cZ}{\mathcal{Z}}
\newcommand{\fA}{\mathfrak{A}}
\author{Philipp J.\ di Dio}
\ead{philipp.didio@uni-konstanz.de}
\address{Department of Mathematics and Statistics, University of Konstanz, Universit\"atsstra{\ss}e 10, D-78464 Konstanz, Germany}
\address{Zukunftskolleg, Universtity of Konstanz, Universit\"atsstra{\ss}e 10, D-78464 Konstanz, Germany}
\address{eMail: philipp.didio@uni-konstanz.de}
\journal{arXiv}
\title{The Early History of Moment Problems and Non-Negative Polynomials with Gaps: Sparse Moment Problems, Sparse Positivstellensätze, and Sparse Nichtnegativstellensätze from a T-System Point of View}
\begin{document}

\begin{abstract}
We deal with and investigate sparse univariate Positivstellensätze, Nichtnegativstellensätze, and solutions to sparse moment problems.
The paper relies heavily on results on T-systems by Karlin in 1963 and by Karlin and Studden in 1966.
We gain complete descriptions of all sparse strictly positive and sparse non-negative algebraic polynomials on $[a,b]$ with $a\geq 0$ and on $[0,\infty)$. We extend, simplify, and solve the sparse Hausdorff and Stieltjes moment problem with these results and the methods of adapted spaces and T-systems.
\end{abstract}

\begin{keyword}
moment problem\sep Positivstellensatz\sep Nichtnegativstellensatz\sep sparse\sep gap\sep T-system
\MSC[2020] Primary 44A60, 14P99 ; Secondary 41A10, 12E10.
\end{keyword}

\maketitle

\setcounter{tocdepth}{2}
\tableofcontents

\section{Introduction}


The theory of moments (or the moment problem) has been connected to non-negative polynomials for a long time and this connection is well-known since Haviland \cite{havila36} or even dates back further.

The classical moment problem is the following:
Given a closed set $K\subseteq\rset^n$ and a real sequence $s = (s_\alpha)_{\alpha\in I}$ with $I\subseteq\nset_0^n$. When does there exist a measure $\mu$ on $K$ such that
\[s_\alpha = \int_K x^\alpha~\diff\mu(x)\]
holds for all $\alpha\in I$?

For more on the theory of moments see e.g.\ \cite{shohat43,akhiezClassical,karlinStuddenTSystemsBook,kreinMarkovMomentProblem,
marshallPosPoly,lauren09,schmudMomentBook} and references therein.

In modern times the theory of moments and the theory of non-negative polynomials were revived by \cite{schmud91} and then put to useful applications, see e.g.\ \cite{lasserreSemiAlgOpt}.

By applications and specially by the need for efficient and fast algorithms the focus more and more turned the last years (and decades) to sparse systems, i.e., the index set $I\subsetneq\nset_0^n$ is not all $\nset_0^n$ and specially not all $\alpha$ with $|\alpha|\leq d$ for some $d\in\nset_0^n$ are present.
It should not be surprising that these sparse systems were studied theoretically.
More surprising is it that the early results in this field are not well-known or even completely forgotten.
And unfortunately it came recently to our attention that several known results are being reproved in weaker versions \cite{scheider23}.

The main purpose of this article is to review the early results in the theory of sparse moment problems and to show how important results and especially sparse Positivstellensätze, sparse Nichtnegativstellensätze, and sparse moment problems follow from these early results.
All results presented here are not contained in the modern literature about the theory of moments \cite{marshallPosPoly,schmudMomentBook}, about real algebraic geometry \cite{bochnak98}, or about (sparse) polynomial optimization \cite{lasserreSemiAlgOpt,margronSparsePolyOpt}.

We hope that this treatment will also be useful for the emerging works of moment problems and polynomials on curves since these often reduce to the univariate polynomial case \cite{zalar23}.

By the title we only look at early results (and their applications). By ``early'' we mean everything up to and including 1966.
Everything between 1967 and up to 1991 we consider ``modern'' and everything after ``contemporary''.
Modern and contemporary results are not considered here since they deserve more space than this article can give.
The year 1966 is chosen since in 1966 the extensive research monograph by Samuel Karlin and William J.\ Studden about T-systems appeared \cite{karlinStuddenTSystemsBook}.
This monograph is an extensive follow up of the work by Karlin in \cite{karlin63}.
Both works solve important problems in the theory of T-systems.
The theory of T-systems is the theoretical framework where e.g.\ sparse univariate algebraic polynomial systems were investigated in.
The year 1991 is chosen since then the first denominator free description of strictly positive polynomials appeared \cite{schmud91} reviving a large part in real algebraic geometry.

The article is structured as follows.
In the next \Cref{sec:prelim} we shortly introduce the notations in this article. 
In \Cref{sec:beginning} we shortly present the ``usual suspects'' (classical results without gaps) and the two first explicit studies of problems with gaps.
We will meet there also Richter's Theorem and Boas' Theorem.
In \Cref{sec:tsystems} we will introduce the theory of T-systems and show their basic properties with a special emphasis on zeros and non-negativity.
By far the most important part is \Cref{sec:sparsePosNiNeg} where we look at the results in \cite{karlin63,karlinStuddenTSystemsBook} and apply them to get sparse algebraic Positivstellensätze and Nichtnegativstellensätze.
Additionally, they are used to solve and even extend the early sparse moment problems from \Cref{sec:beginning}.
In \Cref{sec:summary} we sum up the results.

All results are presented with proofs as far as possible.
Several are collected from the literature but translated to nowadays mathematical language.
Also some missing steps are filled in and errors are corrected.

\section{Preliminaries}
\label{sec:prelim}

Let $n\in\nset$, $K\subseteq\rset^n$ be closed, and $s = (s_\alpha)_{\alpha\in\nset_0^n}$ be a real sequence. We say that $s$ is a $K$-moment sequence if there exists a measure $\mu$ on $K$ such that
\[s_\alpha = \int_K x^\alpha~\diff\mu(x).\]
The measure $\mu$ is called a representing measure. Unless otherwise denoted as signed measure all measures are positive. The moment sequence $s$ is called determined if $\mu$ is unique. We call $s$ a truncated moment sequence when only finitely many $s_\alpha$ with $\alpha\in\nset_0$ are known.

For a real sequence $s=(s_\alpha)_{\alpha\in\nset_0}$ we call the linear functional
$L_s:\rset[x_1,\dots,x_n]\to\rset$ defined by $L_s(x^\alpha) = s_\alpha$
the Riesz functional. For $\beta\in\nset_0$ we define $X^\beta s = (s_{\alpha+\beta})_{\alpha\in\nset_0^n}$ the shifted sequence.

For a sequence $s=(s_\alpha)_{\alpha\in\nset_0^n}$ we define the Hankel matrix $\cH(s) := (s_{\alpha,\beta})_{\alpha,\beta\in\nset_0^n}$.
For $K\subseteq\rset^n$ we set $\pos(K) := \{f\in\rset[x_1,\dots,x_n] \,|\, f\geq 0\ \text{on}\ K\}$.
For any set $\cX$ we denote by $|\cX|$ the cardinality of $\cX$.

\section{The Beginning of the Moment Problem}
\label{sec:beginning}

\subsection{The Usual Suspects: Well-known Classical Results without Gaps}

The first moment problem that was solved is the following.

\begin{stielthm}[{\cite{stielt94}}]\label{thm:stieltjesMP}
Let $s = (s_i)_{i\in\nset_0}$ be a real sequence. The following are equivalent:
\begin{enumerate}[(i)]
\item $s$ is a $[0,\infty)$-moment sequence (Stieltjes moment sequence).

\item $L_s(p)\geq 0$ for all $p\in\pos([0,\infty))$.

\item $L_s(p^2)\geq 0$ and $L_{Xs}(p^2) = L_s(x\cdot p^2)\geq 0$ for all $p\in\rset[x]$.

\item $s$ and $Xs = (s_{i+1})_{i\in\nset_0}$ are positive semidefinite.

\item $\cH(s)\succeq 0$ and $\cH(Xs)\succeq 0$ for all $d\in\nset_0$.
\end{enumerate}
\end{stielthm}

\Cref{thm:stieltjesMP} in the original proof \cite{stielt94} does not use non-negative polynomials. Stieljes uses continued fractions and introduces new sequences which we (nowadays) denote by $s$ and $Xs$.

Stieltjes only proves (i) $\Leftrightarrow$ (iv). The implication (i) $\Leftrightarrow$ (ii) is \Cref{thm:haviland}, (ii) $\Leftrightarrow$ (iii) is the description of $\pos([0,\infty))$, and (iv) $\Leftrightarrow$ (v) is a reformulation of $s$ and $Xs$ being positive semi-definite.

The next moment problem that was solved is the following.

\begin{hamthm}[{\cite[Satz X and Existenstheorem (§8, p.\ 289)]{hamburger20}}]\label{thm:hamburgerMP}
Let $s = (s_i)_{i\in\nset_0}$ be a real sequence. The following are equivalent:
\begin{enumerate}[(i)]
\item $s$ is a $\rset$-moment sequence (Hamburger moment sequence or short moment sequence).

\item $L_s(p)\geq 0$ for all $p\in\pos(\rset)$.

\item $L_s(p^2)\geq 0$ for all $p\in\rset[x]$.

\item $s$ is positive semidefinite.

\item $\cH(s)\succeq 0$.
\end{enumerate}
\end{hamthm}

Hamburger proves similar to Stieltjes \cite{stielt94} the equivalence (i) $\Leftrightarrow$ (iv) via continued fractions.
In \cite[Satz XIII]{hamburger20} Hamburger solves the full moment problem by approximation with truncated moment problems.
This was later in a slightly more general framework reproved in \cite{stochel01}.
Hamburger needed to assume that the sequence of measures $\mu_k$ (which he called ``Belegungen'' and denoted by $\diff\Phi^{(k)}(u)$) to converge to some measure $\mu$ (condition 2 of \cite[Satz XIII]{hamburger20}).
Hamburgers additional condition 2 is nowadays replaced by the vague convergence and the fact that the solution set of representing measures is vaguely compact \cite[Thm.\ 1.19]{schmudMomentBook}, i.e., it assures the existence of a $\mu$ as required by Hamburger in the additional condition 2.

Shortly after Hamburger the moment problem on $[0,1]$ was solved.

\begin{hausthm}[{\cite[Satz II and III]{hausdo21}}]\label{thm:hausdorffMP}
Let $s = (s_i)_{i\in\nset_0}$ be a real sequence. The following are equivalent:
\begin{enumerate}[(i)]
\item $s$ is a $[0,1]$-moment sequence (Hausdorff moment sequence).

\item $L_s(p)\geq 0$ for all $p\in\pos([0,1])$.

\item $L_s(p^2)\geq 0$, $L_{Xs}(p^2)\geq 0$, and $L_{(1-X)s}(p^2)\geq 0$ for all $p\in\rset[x]$.

\item $s$, $Xs$, and $(1-X)s$ are positive semidefinite.

\item $\cH(s)\succeq 0$, $\cH(Xs)\succeq 0$, and $\cH((1-X)s)\succeq 0$.
\end{enumerate}
\end{hausthm}

Hausdorff proves the equivalence (i) $\Leftrightarrow$ (iii) via so called C-sequences.
In \cite{toeplitz11} Toeplitz treats general linear averaging methods.
In \cite{hausdo21} Hausdorff uses these.
Let the infinite dimensional matrix $\lambda = (\lambda_{i,j})_{i,j\in\nset_0}$ be row-finite, i.e., for every row $i$ only finitely many $\lambda_{i,j}$ are non-zero.
Then the averaging method
\[A_i = \sum_{j\in\nset_0} \lambda_{i,j} a_j\]
shall be consistent: If $a_j \to \alpha$ converges then $A_i\to\alpha$ converges to the same limit.
Toeplitz proved a necessary and sufficient condition on $\lambda$ for this property.
Hausdorff uses only part of this property. He calls a matrix $(\lambda_{i,j})_{i,j\in\nset_0}$ with the property that a convergent sequence $(a_j)_{j\in\nset_0}$ is mapped to a convergent sequence $(A_j)_{j\in\nset_0}$ (the limit does not need to be preserved) a C-matrix (convergence preserving matrix).
Hausdorff gives the characterization of C-matrices \cite[p.\ 75, conditions (A) -- (C)]{hausdo21}.
Additionally, if $\lambda$ is a C-matrix and a diagonal matrix with diagonal entries $\lambda_{i,i} = s_i$ then $s = (s_i)_{i\in\nset_0}$ is called a C-sequence.
The equivalence (i) $\Leftrightarrow$ (iii) is then shown by Hausdorff in the result that a sequence is a $[0,1]$-moment sequence if and only it is a C-sequence \cite[p.\ 102]{hausdo21}.

A much simpler approach to solve the $K$-moment problem for any closed $K\subseteq\rset^n$, $n\in\nset$, was presented by Haviland.
He no longer used continued fractions but employed the Riesz(--Markov--Kakutani) representation theorem, i.e., representing a linear functional by integration.

The present Riesz--Markov--Kakutani representation theorem was developed in several stages.
A first version for continuous functions on the unit interval $[0,1]$ is by F.\ Riesz \cite{riesz09}.
It was extended by Markov to some non-compact spaces \cite{markov38} and then by Kakutani to locally compact Hausdorff spaces \cite{kakutani41}.
Interestingly, it already follows from Daniell's Representation Theorem \cite{daniell18,daniell20} with Urysohn's Lemma \cite{urysohn25}.

Haviland proved the following.

\begin{havithm}[{\cite[Theorem]{havila36}}, see also {\cite[Theorem]{havila35}} for $K=\rset^n$]\label{thm:haviland}
Let $n\in\nset$, $K\subseteq\rset^n$ be closed, and $s = (s_\alpha)_{\alpha\in\nset_0^n}$ be a real sequence. The following are equivalent:
\begin{enumerate}[(i)]
\item $s$ is a $K$-moment sequence.

\item $L_s(p)\geq 0$ for all $p\in\pos(K)$.
\end{enumerate}
\end{havithm}

In \cite[Theorem]{havila35} Haviland proves ``only'' the case $K=\rset^n$ with the extension method by M.\ Riesz.
In \cite[Theorem]{havila36} this is extended to any closed $K\subseteq\rset^n$.
The idea to do so is attributed by Haviland to Aurel Wintner \cite[p.\ 164]{havila36}:
``A.\ Wintner has subsequently suggested that it should be possible to extend this result [\cite[Theorem]{havila35}] by requiring that the distribution function [measure] solving the problem have a spectrum [support] contained in a preassigned set, a result which would show the well-known criteria for the various standard special momentum problems (Stieltjes, Herglotz [trigonometric], Hamburger, Hausdorff in one or more dimensions) to be put particular cases of the general $n$-dimensional momentum problem mentioned above.
The purpose of this note is to carry out this extension.''

In \cite{havila36} after the general Theorem \ref{thm:haviland} Haviland then goes through all the classical results (Theorems \ref{thm:stieltjesMP} to \ref{thm:hausdorffMP}, and the Herglotz (trigonometric) moment problem on the unit circle which we did not included here) and shows how all these results (i.e., conditions on the sequences) are recovered from the at this point known representations of non-negative polynomials.

For the Hamburger moment problem Haviland uses
\[\pos(\rset) = \left\{ f^2 + g^2 \,\middle|\, f,g\in\rset[x]\right\}\]
which was already known to Hilbert \cite{hilbert88}.
For the Stieltjes moment problem he uses
\begin{equation}\label{eq:pos0infty1}
\pos([0,\infty)) = \left\{ f_1^2 + f_2^2 + x\cdot (g_1^2 + g_2^2) \,\middle|\, f_1,f_2,g_1,g_2\in\rset[x]\right\}
\end{equation}
with the reference to P\'olya and Szeg{\"o} (previous editions of \cite{polya64,polya70}).
In \cite[p.\ 82, ex.\ 45]{polya64} the representation (\ref{eq:pos0infty1}) is still included while it was already known before, see \cite[p.\ 6, footnote]{shohat43}, that
\begin{equation}\label{eq:pos0infty2}
\pos([0,\infty)) = \left\{ f^2 + x\cdot g^2 \,\middle|\, f,g\in\rset[x]\right\}
\end{equation}
is sufficient.
Also in \cite[Prop.\ 3.2]{schmudMomentBook} the representation (\ref{eq:pos0infty1}) is used, not the representation (\ref{eq:pos0infty2}).

For the $[-1,1]$-moment problem Haviland uses
\[\pos([-1,1]) = \left\{ f^2 + (1-x^2)\cdot g^2 \,\middle|\, f,g\in\rset[x]\right\}.\]
For the Hausdorff moment problem he uses that any non-negative polynomial on $[0,1]$ is a linear combination of $x^m\cdot (1-x)^{p-m}$, $m,p\in\nset_0$, $p\geq m$, with non-negative coefficients.
For the two-dimensional Hausdorff moment problem he uses that any non-negative polynomial on $[0,1]^2$ is a linear combination of $x^m\cdot y^n\cdot (1-x)^{p-m}\cdot (1-y)^{q-n}$, $n,m,q,p\in\nset_0$, $p\geq m$, $q\geq n$, with non-negative coefficients \cite{hildeb33}.
Hildebrandt and Schoenberg \cite{hildeb33} already solved the moment problem on $[0,1]^2$ (and more generally on $[0,1]^n$ for all $n\in\nset$) getting the same result as Haviland.
The idea of using $\pos(K)$-descriptions to solve the moment problem was therefore already used by Hildebrandt and Schoenberg in 1933 \cite{hildeb33} before Haviland uses this in \cite{havila35} and generalized this in \cite{havila36} as suggested to him by Wintner.

With these broader historical remarks we see that of course more people are connected to Theorem \ref{thm:haviland}.
It might also be appropriate to call Theorem \ref{thm:haviland} the \emph{Haviland--Wintner} or \emph{Haviland--Hildebrand--Schoenberg--Wintner Theorem}.
But as so often, the list of contributors is long (and maybe even longer) and hence the main contribution (the general proof) is rewarded by calling it just Haviland Theorem.

As one other solved moment problem of the long list (our list here is far from complete) is the following.

\begin{svethm}[{\cite{svenco39}}]
Let $s=(s_i)_{i\in\nset_0}$ be a real sequence. The following are equivalent:
\begin{enumerate}[(i)]
\item $s$ is a $(-\infty,0]\cup [1,\infty)$-moment sequence.

\item $L_s(p)\geq 0$ for all $p\in\pos((-\infty,0]\cup[1,\infty))$.

\item $L_s(p^2)\geq 0$, $L_{(X^2-X)s}(p^2)\geq 0$ for all $p\in\rset[x]$.

\item $s$ and $(X^2-X)s$ are positive semi-definite.

\item $\cH(s)\succeq 0$ and $\cH((X^2-X)s)\succeq 0$.
\end{enumerate}
\end{svethm}

All moment problems on closed and semi-algebraic sets $K\subseteq\rset$ follow nowadays easily from \Cref{thm:haviland} and the fact that any preodering from a natural description of $K$ is saturated, see e.g.\ \cite[Prop.\ 2.7.3]{marshallPosPoly}.

The higher dimensional moment problem is much harder than the one-dimensional moment problem and in general it is not solved.
The reason is that a description of $\pos(K)$ is in general unknown.
A huge progress in this field was done by Konrad Schmüdgen in 1991 \cite{schmud91} where he solved the $K$-moment problem for compact semi-algebraic sets $K\subset\rset^n$, $n\geq 2$.
As a corollary he gained a complete description of strictly positive $f\in\pos(K)$.
These and subsequence results are discussed elsewhere \cite{marshallPosPoly,lauren09}.

\subsection{Early Results with Gaps}

The early history of moment problems with gaps is very thin. We discuss only \cite{hausdo21a} and \cite{boas39}.

Hausdorff just solved \Cref{thm:hausdorffMP} in \cite{hausdo21} (submitted 11th February 1920) and in \cite{hausdo21a} (submitted 8th September 1920) he treats
\[s_n = \int_0^1 x^{k_n}~\diff\mu(x)\]
with
\[k_0 = 0 < k_1 < k_2 < \dots < k_n < \dots \]
for a sequence of real numbers, i.e., not necessarily in $\nset_0$.
See also \cite[p.\ 104]{shohat43}.
Since Hausdorff in \cite{hausdo21a} did not have access to \Cref{thm:haviland} \cite{havila36} or the description of all non-negative linear combinations of $1,x^{k_1}, \dots, x^{k_n},\dots$ the results in \cite{hausdo21a} need complicated formulations and are not very strong.
Only with the description of non-negative linear combinations by Karlin \cite{karlin63} an easy formulation of the result is possible.
We will therefore postpone the exact formulation to \Cref{thm:sparseTruncHausd}, \ref{thm:sparseHausd}, and \ref{thm:generalSparseHausd} where we present easy proofs using also the theory of adapted spaces \cite{choquet69,phelpsLectChoquetTheorem,schmudMomentBook}.

In \cite{boas39} Boas investigates the Stieltjes moment problem ($K=[0,\infty)$) with gaps.
Similar to \cite{hausdo21a} the results are difficult to read and they are unfortunately incomplete since Boas (like Hausdorff) did not have access to the description of all non-negative or strictly positive polynomials with gaps (or more general exponents).
We will give the complete solution of the $[0,\infty)$-moment problem with gaps and more general exponents in \Cref{thm:sparseStieltjesMP}.

\subsection{Finitely Atomic Representing Measures: The Richter Theorem}

When working with a truncated moment sequence it is often useful in theory and applications to replace a representing measure with a finitely atomic measure without changing the moments.
That this is always possible for truncated moment sequences was first proved in full generality by Richter \cite{richte57}.

\begin{richthm}[{\cite[Satz 4]{richte57}}]\label{thm:richter}
Let $n\in\nset$, $(\cX,\fA)$ be a measurable space, and $\{f_i\}_{i=1}^n$ be a family of real measurable functions $f_i:\cX\to\rset$. Then for every measure $\mu$ on $\cX$ such that all $f_i$ are $\mu$-integrable, i.e.,
\[s_i := \int_\cX f_i(x)~\diff\mu(x) \quad <\infty\]
for all $i=1,\dots,n$, there exists a $K\in\nset$ with $K\leq n$, points $x_1,\dots,x_K\in\cX$ pairwise different, and $c_1,\dots,c_K\in (0,\infty)$ such that
\[s_i = \sum_{j=1}^K c_j\cdot f_i(x_j)\]
holds for all $i=1,\dots,n$.
\end{richthm}

The history of this result is often still misrepresented in the literature, even after K.\ Schmüdgen and the present author compared the different contributions and publication dates in detail in \cite{didioCone22}. With these historical remarks it also is appropriate to call Theorem \ref{thm:richter} the \emph{Richter--Rogosinski--Rosenbloom Theorem} \cite{richte57,rogosi58,rosenb52}.
Every other result before or after \cite{richte57} is only a special case and can easily be recovered from \Cref{thm:richter}, especially \cite{bayer06}.

Since \Cref{thm:richter} only needs a family of finitely many measurable functions it also includes all cases of gaps in the truncated moment theory.

\subsection{Signed Representing Measures: Boas' Theorem}

In the theory of moments almost exclusively the representation by non-negative measures is treated.
The reason is the following.

\begin{boasthm}[\cite{boas39a} or e.g.\ {\cite[p.\ 103, Thm.\ 3.11]{shohat43}}]\label{thm:boas}
Let $s = (s_i)_{i\in\nset_0}$ be a real sequence. Then there exist infinitely many signed measures $\mu$ on $\rset$ and infinitely many signed measures $\nu$ on $[0,\infty)$ such that
\[s_i = \int_\rset x^{i}~\diff\mu(x) = \int_0^\infty x^{i}~\diff\nu(x)\]
holds for all $i\in\nset_0$.
\end{boasthm}

\Cref{thm:boas} also holds in the $n$-dimensional case on $\rset^n$ and $[0,\infty)^n$ for any $n\in\nset$.
See \cite[p.\ 104]{shohat43} for an extension which kinds of measures can be chosen.

\Cref{thm:boas} also covers the case with gaps.
If any gaps in the real sequence $s$ are present then fill them with any real number you like.

\section{T-Systems}
\label{sec:tsystems}

We have seen the early attempts to deal with gaps in the moment problems.
A sufficient solution was at these times not possible.
Only with the introduction of so called T-systems and their rigorous investigation significant progress and finally complete solutions were possible.
For more on the early development and history of T-systems see \cite{krein51,karlin63,karlinStuddenTSystemsBook,kreinMarkovMomentProblem}.

\subsection{Definition and Basic Properties of T-Systems}

\begin{dfn}\label{dfn:tSystem}
Let $n\in\nset$, $\cX$ be a set with $|\cX|\geq n+1$, and let $\cF=\{f_i\}_{i=0}^n$ be a family of real functions $f_i:\cX\to\rset$.
We call any linear combination
\[f = \sum_{i=0}^n a_i\cdot f_i \quad\in\lin\cF\]
with $a_1,\dots, a_n\in\rset$ a \emph{polynomial}.
The family $\cF$ on $\cX$ is called a \emph{Tchebycheff system} (\emph{T-system}) \emph{of order $n$ on $\cX$} if any polynomial $f\in\lin\cF$ with $\sum_{i=0}^n a_i^2 > 0$ has at most $n$ zeros on $\cX$.

If $\cX$ is a topological space and $\cF$ is a family of continuous functions then we call $\cF$ a \emph{continuous T-system}.
If additionally $\cX$ is the unit circle then we call $\cF$ a \emph{periodic T-system}.
\end{dfn}

\begin{cor}\label{cor:restriction}
Let $n\in\nset$ and $\cF=\{f_i\}_{i=0}^n$ be a T-system of order $n$ on some $\cX$ with $|\cX|\geq n+1$. Let $\cY\subset\cX$ with $|\cY|\geq n+1$. Then $\cG := \{f_i|_\cY\}_{i=0}^n$ is a T-system of order $n$ on $\cY$.
\end{cor}
\begin{proof}
Let $f\in\lin\cF$. Then $f$ has at most $n$ zeros in $\cX$ and hence $f|_\cY$ has at most $n$ zeros in $\cY\subset\cX$.
Since for any $g\in\lin\cG$ there is a $f\in\lin\cF$ such that $g = f|_\cY$ we have the assertion.
\end{proof}

The set $\cX$ does not require any structure or property except $|\cX|\geq n+1$.

In the theory of T-systems we often deal with one special matrix.
We use the following abbreviation.

\begin{dfn}\label{dfn:kreinMatrix}
Let $n\in\nset$, $\{f_i\}_{i=0}^n$ be a family of real functions on a set $\cX$ with $|\cX|\geq n+1$. We define the matrix
\[\begin{pmatrix}
f_0 & f_1 & \dots & f_n\\ x_0 & x_1 & \dots & x_n
\end{pmatrix}
:=
\begin{pmatrix}
f_0(x_0) & f_1(x_0) & \dots & f_n(x_0)\\
f_0(x_1) & f_1(x_1) & \dots & f_n(x_1)\\
\vdots & \vdots & & \vdots\\
f_0(x_n) & f_1(x_n) & \dots & f_n(x_n)
\end{pmatrix} = (f_i(x_j))_{i,j=0}^n\]
for any $x_0,\dots,x_n\in\cX$.
\end{dfn}

\begin{lem}[see e.g.\ {\cite[p.\ 31]{kreinMarkovMomentProblem}}]\label{lem:determinant}
Let $n\in\nset$, $\cX$ be a set with $|\cX|\geq n+1$, and $\cF=\{f_i\}_{i=0}^n$ be a family of real functions $f_i:\cX\to\rset$. The following are equivalent:
\begin{enumerate}[(i)]
\item $\cF$ is a T-system of order $n$ on $\cX$.

\item The determinant
\[\det\begin{pmatrix}
f_0 & f_1 & \dots & f_n\\ x_0 & x_1 & \dots & x_n
\end{pmatrix}\]
does not vanish for any pairwise distinct points $x_0,\dots,x_n\in\cX$.
\end{enumerate}
\end{lem}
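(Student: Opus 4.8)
The plan is to prove both implications by contraposition, reducing everything to the elementary fact that a square matrix over $\rset$ is singular if and only if its columns are linearly dependent. Fix pairwise distinct points $x_0,\dots,x_n\in\cX$ (which exist since $|\cX|\geq n+1$) and write $M = (f_i(x_j))_{i,j=0}^n$ for the matrix of \Cref{dfn:kreinMatrix}, so that row $j$ records the values of all the $f_i$ at $x_j$ while column $i$ records the values of $f_i$ at all the points. The key observation is that for a coefficient vector $a=(a_0,\dots,a_n)^{\mathsf{T}}$ one has $Ma=0$ precisely when $\sum_{i=0}^n a_i f_i(x_j)=0$ for every $j$, i.e.\ precisely when the polynomial $f=\sum_{i=0}^n a_i f_i$ vanishes at all of $x_0,\dots,x_n$. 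Thus a nontrivial kernel vector of $M$ is the same datum as a nonzero polynomial in $\lin\cF$ (one with $\sum_i a_i^2>0$) vanishing at these $n+1$ distinct points.

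For (i) $\Rightarrow$ (ii) I would argue contrapositively. Suppose the determinant of $M$ vanishes for some choice of pairwise distinct $x_0,\dots,x_n$. Then $M$ is singular, so there is a nonzero $a$ with $Ma=0$; by the observation above the corresponding polynomial $f=\sum_i a_i f_i$ is nonzero yet has the $n+1$ distinct zeros $x_0,\dots,x_n$. This contradicts the defining property of a T-system that every nonzero polynomial in $\lin\cF$ has at most $n$ zeros on $\cX$. Hence, if $\cF$ is a T-system, the determinant can never vanish on pairwise distinct points.

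For (ii) $\Rightarrow$ (i), again contrapositively, suppose $\cF$ is not a T-system, so some polynomial $f=\sum_i a_i f_i$ with $\sum_i a_i^2>0$ has at least $n+1$ zeros on $\cX$. Choosing $n+1$ of them, say pairwise distinct $x_0,\dots,x_n$, the nonzero vector $a$ lies in the kernel of the associated matrix $M$, which forces $\det M=0$ --- a violation of (ii). Therefore (ii) implies that every nonzero polynomial has at most $n$ zeros, i.e.\ $\cF$ is a T-system.

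The argument is short because the content is essentially linear-algebraic; the only points demanding a little care are bookkeeping ones. First, one must have at least $n+1$ points available to form a square $(n+1)\times(n+1)$ matrix, which is exactly what the hypothesis $|\cX|\geq n+1$ guarantees. Second, one must keep straight the convention of \Cref{dfn:kreinMatrix} that the evaluation points index the rows and the functions index the columns, so that it is column dependence (with coefficients $a_i$) rather than row dependence that corresponds to a polynomial relation; transposing leaves the determinant unchanged, so either reading is admissible, but the translation between \emph{kernel vector} and \emph{vanishing polynomial} should be stated explicitly. I do not expect any genuine obstacle beyond this.
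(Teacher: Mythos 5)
Your proof is correct and is essentially the same argument as the paper's: both rest on the identification of a nontrivial kernel vector of the matrix $(f_i(x_j))_{i,j}$ with a nonzero polynomial in $\lin\cF$ vanishing at $x_0,\dots,x_n$, the only cosmetic difference being that you phrase both implications contrapositively while the paper states the first one directly. No gaps.
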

\begin{proof}
\underline{(i) $\Rightarrow$ (ii):}
Let $x_0,\dots,x_n\in\cX$ be pairwise distinct.
Since $\cF$ is a T-system we have that any non-trivial polynomial $f$ has at most $n$ zeros, i.e., the matrix
\[\begin{pmatrix}
f_0 & f_1 & \dots & f_n\\ x_0 & x_1 & \dots & x_n
\end{pmatrix}\]
has trivial kernel and hence its determinant is non-zero. Since $x_0,\dots,x_n\in\cX$ are arbitrary pairwise distinct we have (ii).

\underline{(ii) $\Rightarrow$ (i):} Assume there is a polynomial $f$ with $\sum_{i=0}^n a_i^2 >0$ which has the $n+1$ pairwise distinct zeros $z_0,\dots,z_n\in\cX$. Then the matrix
\[Z=\begin{pmatrix}
f_0 & f_1 & \dots & f_n\\ z_0 & z_1 & \dots & z_n
\end{pmatrix}\]
has non-trivial kernel since $0\neq (a_0,a_1,\dots,a_n)^T\in\ker Z$ and hence $\det Z = 0$ in contradiction to (ii).
\end{proof}

\begin{cor}\label{cor:uniqueDeter}
Let $n\in\nset$, and $\cF = \{f_i\}_{i=0}^n$ be a T-system of order $n$ on some $\cX\subseteq\rset$ with $|\cX|\geq n+1$. The following hold:
\begin{enumerate}[(i)]
\item The functions $f_0,\dots,f_n$ are linearly independent over $\cX$.

\item For any $f = \sum_{i=0}^n a_i\cdot f_i\in\lin\cF$ the coefficients $a_i$ are unique.
\end{enumerate}
\end{cor}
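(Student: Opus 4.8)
The plan is to derive both statements directly from the defining property of a T-system, namely that a non-trivial polynomial has at most $n$ zeros, and from the equivalent determinant criterion in \Cref{lem:determinant}. For part (i), linear independence, I would argue by contraposition: suppose the functions $f_0,\dots,f_n$ were linearly dependent over $\cX$. Then there exist coefficients $a_0,\dots,a_n$ with $\sum_{i=0}^n a_i^2 > 0$ such that the polynomial $f = \sum_{i=0}^n a_i\cdot f_i$ vanishes identically on $\cX$. Since $|\cX|\geq n+1$, this polynomial has at least $n+1$ zeros on $\cX$ (in fact infinitely many if $\cX$ is larger), contradicting the T-system property that any non-trivial polynomial has at most $n$ zeros. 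Hence no such dependence relation exists and $f_0,\dots,f_n$ are linearly independent.

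For part (ii), uniqueness of coefficients, I would reduce it to part (i) in the standard way. Suppose $f = \sum_{i=0}^n a_i\cdot f_i = \sum_{i=0}^n b_i\cdot f_i$ are two representations of the same $f\in\lin\cF$. Then $\sum_{i=0}^n (a_i - b_i)\cdot f_i = 0$ on $\cX$, and by the linear independence established in (i) we conclude $a_i = b_i$ for all $i$. Thus the coefficients are uniquely determined.

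Alternatively, and perhaps more in the spirit of the determinant machinery just developed, one can phrase the uniqueness via \Cref{lem:determinant}: pick any $n+1$ pairwise distinct points $x_0,\dots,x_n\in\cX$ (possible since $|\cX|\geq n+1$), and note that the matrix $(f_i(x_j))_{i,j=0}^n$ has non-vanishing determinant, hence is invertible. The coefficient vector $(a_0,\dots,a_n)^T$ is therefore the unique solution of the linear system obtained by evaluating $f$ at these points, which immediately gives both linear independence (the homogeneous system has only the trivial solution) and uniqueness of the coefficients.

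I do not expect any genuine obstacle here; the statement is an essentially immediate consequence of the definition together with \Cref{lem:determinant}, and the only point requiring a moment of care is to invoke the hypothesis $|\cX|\geq n+1$ to guarantee enough distinct evaluation points (equivalently, enough zeros to trigger the contradiction). The two parts are logically linked, with (ii) following from (i), so the cleanest presentation proves (i) first and then deduces (ii).
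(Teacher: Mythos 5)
Your proposal is correct. Note that the paper disposes of this corollary in one line by citing \Cref{lem:determinant} (i) $\Rightarrow$ (ii): the evaluation matrix at any $n+1$ distinct points is invertible, which is exactly the "alternative" argument you sketch in your third paragraph. Your primary argument is slightly different and even more elementary: it bypasses the determinant machinery entirely and argues directly from \Cref{dfn:tSystem}, observing that a nontrivial dependence relation would produce a nonzero polynomial with at least $n+1$ zeros on $\cX$. Both routes are valid; the direct one has the small advantage of not requiring \Cref{lem:determinant} at all, while the determinant route gives (i) and (ii) simultaneously from the invertibility of a single matrix, which is why the paper prefers it. Your deduction of (ii) from (i) is the standard reduction and is fine.
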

\begin{proof}
Follows immediately from \Cref{lem:determinant} (i) $\Rightarrow$ (ii).
\end{proof}

We even have the following.

\begin{thm}[see e.g.\ {\cite[p.\ 33]{kreinMarkovMomentProblem}}]
Let $n\in\nset$, $\cF$ be a T-system on some set $\cX$ with $|\cX|\geq n+1$, and let $x_0,\dots,x_n\in\cX$ be pairwise different points.
The following hold:
\begin{enumerate}[(i)]
\item Any $f\in\lin\cF$ is uniquely determined by its values $f(x_0),\dots, f(x_n)$.

\item For any $y_0,\dots,y_n\in\rset$ there exists a unique $f\in\lin\cF$ with $f(x_i)=y_i$ for all $i=0,\dots,n$.
\end{enumerate}
\end{thm}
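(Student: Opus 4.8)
The plan is to deduce both statements from the determinant characterization in \Cref{lem:determinant}, which is the only real input needed. The conceptual picture is that evaluation at the fixed points defines a linear map $\Phi\colon\lin\cF\to\rset^{n+1}$, $\Phi(f)=(f(x_0),\dots,f(x_n))$, and the whole theorem amounts to the claim that $\Phi$ is a bijection.

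First I would prove (i). Suppose $f,g\in\lin\cF$ satisfy $f(x_i)=g(x_i)$ for all $i=0,\dots,n$. Then $h:=f-g\in\lin\cF$ vanishes at the $n+1$ pairwise distinct points $x_0,\dots,x_n$. By the defining property of a T-system (\Cref{dfn:tSystem}), a polynomial with more than $n$ zeros must have all its coefficients equal to zero, so $h$ is the zero function and $f=g$. This shows $\Phi$ is injective, which is exactly the uniqueness asserted in (i).

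For (ii) I would read off existence and uniqueness directly from the interpolation matrix. Writing $f=\sum_{i=0}^n a_i f_i$, the conditions $f(x_j)=y_j$ become the linear system $\sum_{i=0}^n a_i\, f_i(x_j)=y_j$ for $j=0,\dots,n$, whose coefficient matrix is precisely the matrix from \Cref{dfn:kreinMatrix}. By \Cref{lem:determinant}~(i)$\Rightarrow$(ii) this matrix has nonzero determinant for the pairwise distinct $x_0,\dots,x_n$, hence is invertible, so the system has a unique solution $(a_0,\dots,a_n)$, which yields a unique $f\in\lin\cF$ with the prescribed values. Equivalently, one can argue that $\Phi$ is an injective linear map between spaces of equal dimension $n+1$ (the $f_i$ being linearly independent by \Cref{cor:uniqueDeter}), hence surjective.

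I expect no serious obstacle: once \Cref{lem:determinant} is in hand the statement is essentially linear algebra. The only point requiring a little care is to distinguish the zero function on $\cX$ from the zero tuple of coefficients in the argument for (i); this is handled by \Cref{cor:uniqueDeter}, which guarantees that a polynomial equals the zero function exactly when all its coefficients vanish, so the zero-counting argument legitimately forces $f=g$ as elements of $\lin\cF$.
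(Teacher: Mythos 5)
Your proposal is correct and follows essentially the same route as the paper: both parts reduce to the invertibility of the interpolation matrix guaranteed by \Cref{lem:determinant}, and your part (ii) is verbatim the paper's argument. The only cosmetic difference is in (i), where you argue by zero-counting on the difference $f-g$ via \Cref{dfn:tSystem} rather than by solving the linear system for the coefficients; since \Cref{lem:determinant} is itself derived from that zero-counting property, the two arguments are interchangeable.
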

\begin{proof}
\underline{(i):}
Since $f\in\lin\cF$ we have $f = \sum_{i=0}^n a_i\cdot f_i$.
Let $x_1,\dots,x_n\in\cF$ be pairwise distinct. Then by \Cref{lem:determinant} (i) $\Rightarrow$ (ii) we have that
\[\begin{pmatrix}
f(x_0)\\ \vdots\\ f(x_n)
\end{pmatrix} = \begin{pmatrix}
f_0 & f_1 & \dots & f_n\\ x_0 & x_1 & \dots & x_n
\end{pmatrix}\cdot \begin{pmatrix}
\alpha_0\\ \vdots\\ \alpha_n
\end{pmatrix} \]
has the unique solution $\alpha_0 = a_0$, \dots, $\alpha_n = a_n$.

\underline{(ii):}
By the same argument as in (i) the system
\[\begin{pmatrix}
y_0\\ \vdots\\ y_n
\end{pmatrix} = \begin{pmatrix}
f_0 & f_1 & \dots & f_n\\ x_0 & x_1 & \dots & x_n
\end{pmatrix}\cdot \begin{pmatrix}
\alpha_0\\ \vdots\\ \alpha_n
\end{pmatrix} \]
has the unique solution $\alpha_0 = a_0$, \dots, $\alpha_n = a_n$.
\end{proof}

So far we imposed no structure on $\cX$.
We now impose structure on $\cX$.
The following structural result was proved in \cite{mairhu56} for compact subsets $\cX$ of $\rset^n$ and for arbitrary compact sets $\cX$ in \cite{sieklu58,curtis59}.

\begin{thm}[{\cite[Thm.\ 2]{mairhu56}}, {\cite{sieklu58}}, {\cite[Thm.\ 8 and Cor.]{curtis59}}]\label{thm:msc}
Let $n\in\nset$ and $\cF$ be a continuous T-system of order $n$ on a topological space $\cX$. If $\cX$ is a compact metrizable space then $\cX$ can be homeomorphically embedded in the unit circle $\{(x,y)\in\rset^2 \,|\, x^2 + y^2 = 1\}$.
\end{thm}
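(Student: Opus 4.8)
The plan is to argue the contrapositive via a sign obstruction carried by the Chebyshev determinant. The single tool driving everything is
\[
\Delta(x_0,\dots,x_n) := \det\begin{pmatrix} f_0 & f_1 & \dots & f_n\\ x_0 & x_1 & \dots & x_n\end{pmatrix},
\]
which by \Cref{lem:determinant} is continuous and nowhere zero on the open subset of $\cX^{n+1}$ consisting of tuples with pairwise distinct entries, and whose decisive feature is purely algebraic: interchanging two arguments $x_j\leftrightarrow x_k$ swaps two rows of the matrix from \Cref{dfn:kreinMatrix}, so $\Delta$ acquires a factor $-1$. The case $n=1$ is in fact immediate and needs none of this: the map $x\mapsto[f_0(x):f_1(x)]\in\pset^1$ is a continuous injection (two equal images would force the corresponding $2\times2$ determinant to vanish), hence a homeomorphism of the compact space $\cX$ onto a subset of $\pset^1\cong\sset^1$. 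The substance of the theorem therefore lies in the range $n\geq2$, on which I focus, so that $n+1\geq3$.

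The heart of the matter is to turn the topology of $\cX$ into a continuous path in the distinct-tuple space along which $\Delta$ must change sign. Suppose one can find a path $t\mapsto(x_0(t),\dots,x_n(t))$, $t\in[0,1]$, of pairwise distinct tuples whose endpoint differs from its start by a single transposition, say $x_0(1)=x_1(0)$, $x_1(1)=x_0(0)$, and $x_i(1)=x_i(0)$ for $i\geq2$. Then $\Delta$ runs continuously from a nonzero value $v$ to its value at the swapped tuple, namely $-v$; since $\rset\setminus\{0\}$ is disconnected, this is impossible and contradicts \Cref{lem:determinant}. So $\cX$ can admit no such \emph{transposition path}. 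The point is that this is precisely a rigidity characteristic of the circle: for any three of the $n+1$ points lying on $\sset^1$ their cyclic order is a continuous invariant of the distinct-tuple space, so no transposition can be realized without a collision — which is exactly why $\sset^1$ itself is the borderline case that is \emph{not} excluded.

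Conversely, a transposition path appears as soon as $\cX$ contains enough room to break this cyclic rigidity, and the two model situations are the two standard obstructions to circle-embeddability. If $\cX$ contained a triod (three arcs joined at a single endpoint), I would park the $n-1$ spectator points far out on one arc and swap the two active points using the remaining arcs as sidings: route one active point through the branch point onto a free arc, slide the other into its former place, and bring the first back, never letting two points coincide. If instead $\cX$ contained a $2$-disk, the two active points can be braided around each other inside the disk while the spectators stay near the boundary. Either configuration yields a transposition path, hence the forbidden sign change, so $\cX$ can contain neither a triod nor a $2$-disk. The content assembled in \cite{mairhu56,sieklu58,curtis59} is that these (together with the analogous higher branchings) are the only obstructions, so that a compact metrizable space free of transposition paths embeds homeomorphically in $\sset^1$, which is the claim.

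I expect the genuine difficulty to sit at the two ends of the middle step rather than in the routine sign computation. First, building a transposition path while keeping all $n+1$ points \emph{simultaneously} distinct is more delicate than it looks: one must verify that the spectator points can be isolated from the region where the swap occurs, which is exactly where an honest branch point or an honest $2$-cell is used. Second, and more seriously, the clean topological characterization of which compact metric spaces embed in $\sset^1$ is itself nontrivial continuum and dimension theory — the naive guess ``dimension $\leq1$ and no triod'' already fails for examples such as a circle together with a disjoint arc, where two points can be swapped on the circle while a spectator rests on the arc. This topological engine, and not the T-system algebra, is the real obstacle and the reason three independent papers are invoked.
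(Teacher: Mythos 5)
First, note that the paper itself offers no proof of \Cref{thm:msc}: it is stated with citations to Mairhuber, Sieklucki and Curtis and used as a black box, so there is nothing internal to compare your argument against. On its own merits, your proposal correctly isolates the classical T-system half of the argument: the determinant of \Cref{dfn:kreinMatrix} is continuous and nonvanishing on the space of pairwise-distinct $(n+1)$-tuples by \Cref{lem:determinant}, it changes sign under a transposition of two arguments, and hence no path of pairwise-distinct tuples can join a tuple to a transposition of itself. Your $n=1$ argument via the injection into $\pset^1$ is also correct. Two points need repair, however. The smaller one: your three-step swap on a triod as written collides --- after parking $x_0$ on the free arc and sliding $x_1$ into $x_0$'s old slot, $x_0$ cannot return past $x_1$ to the outer position; you need a second siding (e.g.\ the portion of the spectator arc between the branch point and the spectators), so the construction works but not as described.

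The serious gap is the final step. You derive the necessary conditions ``no triod, no $2$-disk'', then correctly observe that these are \emph{not} sufficient for embeddability in $\sset^1$ (circle plus disjoint arc), then retreat to ``no transposition path'' as the operative hypothesis --- but you never prove, nor precisely cite, the implication that a compact metrizable space admitting no transposition path of $n+1$ points embeds in the circle. That implication is the entire content of the Sieklucki and Curtis papers and involves genuine continuum theory (classifying which compacta embed in $\sset^1$, and showing the configuration-space connectivity needed to produce a swap in every space outside that class). As it stands your argument establishes only that $\cX$ satisfies certain necessary conditions; the chain from those conditions to the conclusion is asserted, not closed. You are candid that this is where the difficulty sits, but candour does not fill the gap: the proposal is an accurate sketch of the easy half of the theorem together with an appeal to the literature for the hard half.
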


\begin{cor}[{\cite[Thm.\ 8]{curtis59}}]
The order $n$ of a periodic T-system is even.
\end{cor}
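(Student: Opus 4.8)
The plan is to parametrize the circle and reduce the statement to a sign-counting argument for a single continuous periodic function built from the T-system determinant of \Cref{dfn:kreinMatrix}. Writing $\cX = \{(\cos\theta,\sin\theta) \mid \theta\in\rset\}$ and $\tilde f_i(\theta) := f_i(\cos\theta,\sin\theta)$, each $\tilde f_i$ is continuous and $2\pi$-periodic. I would fix once and for all $n$ pairwise distinct angles $\theta_1 < \theta_2 < \dots < \theta_n$ in one period and set
\[ g(t) := \det\begin{pmatrix} f_0 & f_1 & \dots & f_n \\ t & \theta_1 & \dots & \theta_n \end{pmatrix}. \]
First I would record the elementary structural facts about $g$: expanding the determinant along its first row exhibits $g = \sum_{i=0}^n c_i\tilde f_i$ with constants $c_i$ depending only on $\theta_1,\dots,\theta_n$, so $g\in\lin\cF$; it is continuous and $2\pi$-periodic; and by \Cref{lem:determinant} it vanishes precisely when $t$ coincides (mod $2\pi$) with one of $\theta_1,\dots,\theta_n$ and is nonzero otherwise. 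In particular $g\not\equiv 0$, so $g$ is a non-trivial polynomial of the T-system whose only zeros in one period are the $n$ points $\theta_1,\dots,\theta_n$.

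The heart of the argument is to show that $g$ changes sign at each $\theta_j$. Here I would exploit that the determinant $D(x_0,\dots,x_n) := \det\bigl(\tilde f_i(x_j)\bigr)_{i,j=0}^n$ is an alternating function of its point-arguments, since interchanging two of them interchanges two rows. Comparing a point $t^-$ just below $\theta_j$ with a point $t^+$ just above it, the two ordered tuples $(t^-,\theta_1,\dots,\theta_n)$ and $(t^+,\theta_1,\dots,\theta_n)$ are sorted into increasing order by permutations that differ by a single transposition (the swap of $t$ with $\theta_j$), so their determinants acquire opposite permutation signs; meanwhile the two resulting \emph{sorted} tuples can be joined inside a connected region of strictly increasing $(n+1)$-tuples, obtained by moving only the two coordinates adjacent to $\theta_j$ and keeping the remaining $\theta_k$ fixed, on which $D$ is continuous and, by \Cref{lem:determinant}, nowhere vanishing, hence of constant sign. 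Combining the two observations gives $\sign g(t^-) = -\sign g(t^+)$, i.e.\ a genuine sign change at $\theta_j$. The subtle point — and the step I expect to be the main obstacle — is precisely this sign change: because the $f_i$ are only assumed continuous, no derivative of $g$ is available, and the flip must be extracted from the permutation-sign and connectedness bookkeeping rather than from a transversality computation.

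Finally I would assemble the parity count. The $n$ zeros cut the circle into $n$ open arcs on each of which $g$ has constant nonzero sign, and by the previous step consecutive arcs carry opposite signs. Traversing the circle once returns to the starting arc, so the signs must close up consistently, which forces $(-1)^n = 1$; equivalently, a continuous $2\pi$-periodic function with only sign-changing zeros can have only an even number of them per period. Therefore $n$ is even, as claimed.
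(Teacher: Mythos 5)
Your proof is correct, but it takes a genuinely different route from the paper's. The paper fixes $n+1$ \emph{equally spaced} points on the circle and rotates them all by one increment: the resulting configuration is the same point set cyclically permuted, so the determinant picks up the sign $(-1)^n$ of an $(n+1)$-cycle, while continuity and non-vanishing along the rotation force constant sign --- for odd $n$ this is an immediate contradiction, with no polynomial in $\lin\cF$ ever being constructed. You instead build the explicit polynomial $g\in\lin\cF$ vanishing exactly at $n$ prescribed points, prove that each zero is a sign change (your transposition-plus-connectedness bookkeeping is sound, and correctly replaces the derivative argument that is unavailable for merely continuous $f_i$; the only detail left implicit is the wrap-around at the extreme points $\theta_1,\theta_n$, which is handled by working in a local chart), and then close the argument with the parity of sign changes around a circle. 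Both proofs ultimately rest on the same two facts --- the determinant is alternating in its point arguments, and it is continuous and nowhere zero on distinct tuples --- but the paper's version is shorter and needs no zero analysis, whereas yours yields the stronger local byproduct that every such determinantal polynomial on the circle has only nodal zeros, which sits closer to the index and zero-counting machinery (\Cref{thm:zeros1}) used throughout the rest of the paper.
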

\begin{proof}
Let $\varphi:[0,2\pi]\to S=\{(x,y)\in\rset^2 \,|\, x^2 + y^2\}$ with $\varphi(\alpha) = (\sin\alpha, \cos\alpha)$ and $\cF=\{f_i\}_{i=0}^n$ be a periodic T-system. Then the $f_i$ are continuous and hence also
\[\det\begin{pmatrix}
f_0 & f_1 & \dots & f_n\\ t_0 & t_1 & \dots & t_n
\end{pmatrix}\]
is continuous in $t_0,\dots,t_n\in S$. If $\cF$ is a T-system we have that
\[d(\alpha) := \det\begin{pmatrix}
f_0 & f_1 & \dots & f_n\\ \varphi(\alpha) & \varphi(\alpha+2\pi/(n+1)) & \dots & \varphi(\alpha + 2n\pi/(n+1))
\end{pmatrix}\]
in non-zero for all $\alpha\in [0,2\pi]$ and never changes singes. If $n$ is odd then $d(0) = -d(2\pi/(n+1))$ which is a contradiction. Hence, $n$ must be even.
\end{proof}

\subsection{Examples of T-systems}
\label{subsec:examples}

\begin{exms}[algebraic polynomials, see e.g.\ \cite{karlinStuddenTSystemsBook,kreinMarkovMomentProblem}]\label{exm:algPoly}
\begin{enumerate}[\bfseries (a)]
\item Let $n\in\nset$ and $\cX \subseteq \rset$ with $|\cX|\geq n+1$. Then the family $\cF = \{x^i\}_{i=0}^n$ is a T-system. This follows immediately from the Vandermonde determinant
\begin{equation}\label{eq:abuse}
\det\begin{pmatrix}
1 & x & \dots & x^n\\ x_0 & x_1 & \dots & x_n
\end{pmatrix} = \prod_{0\leq i<j\leq n} (x_j-x_i)
\end{equation}
for any $x_0,\dots,x_n\in\cX$.

Note that we abuse the notation for the algebraic polynomial cases.
The functions $f_0,\dots,f_n$ should not be denoted by $x^i$ but by
\[\cdot^{i}:\rset\to\rset,\ x\mapsto x^i.\]
However, then we have the notation
\[\begin{pmatrix}
\cdot^0 & \cdot^1 & \dots & \cdot^n\\ x_0 & x_1 & \dots & x_n
\end{pmatrix} \qquad\text{or more general}\qquad
\begin{pmatrix}
\cdot^{\alpha_0} & \cdot^{\alpha_1} & \dots & \cdot^{\alpha_n}\\
x_0 & x_1 & \dots & x_n
\end{pmatrix}\]
which seems hard to read.
For convenience we will therefore abuse the notation and use $x^i$ and (\ref{eq:abuse}).

\item Let $n\in\nset$, $\cX\subseteq [0,\infty)$ with $|\cX|\geq n+1$, and $\alpha_0 = 0 < \alpha_1 < \dots < \alpha_n$ be real numbers.
Then $\cF = \{x^{\alpha_i}\}_{i=0}^n$ is a T-system of order $n$ on $\cX$.

\item Let $n\in\nset$, $\cX\subseteq (0,\infty)$ with $|\cX|\geq n+1$, and $0 < \alpha_0 < \dots < \alpha_n$ be real numbers. Then $\cF = \{ x^{\alpha_i}\}_{i=0}^n$ is a T-system of order $n$ on $\cX$.\exmsymbol
\end{enumerate}
\end{exms}

\begin{exm}[see e.g.\ {\cite[p.\ 38]{kreinMarkovMomentProblem}}]
Let $n\in\nset$ and $\alpha_0 < \alpha_1 < \dots < \alpha_n$ be reals. Then
\[\cF = \left\{e^{\alpha_0 x}, e^{\alpha_1 x},\dots, e^{\alpha_n x}\right\}\]
is a T-system on any $\cX\subseteq\rset$ with $|\cX|\geq n+1$.\exmsymbol
\end{exm}

\begin{exm}[see e.g.\ {\cite[p.\ 37-38]{kreinMarkovMomentProblem}}]
Let $n\in\nset$ and $\alpha_0 < \alpha_1 < \dots < \alpha_n$ be reals. Then
\[\cF = \left\{\frac{1}{x+\alpha_0}, \frac{1}{x+\alpha_1}, \dots, \frac{1}{x+\alpha_n}\right\}\]
is a continuous T-system on any $[a,b]$ or $[a,\infty)$ with $-\alpha_0 < a < b$.\exmsymbol
\end{exm}

\begin{exm}[see e.g.\ {\cite[p.\ 38]{kreinMarkovMomentProblem}}]
Let $n\in\nset$ and let $f\in C^n(\cX)$ with $\cX=[a,b]$, $a<b$, and $f^{(n)}>0$ on $\cX$. Then
\[\cF = \{1,x,x^2,\dots, x^{n-1},f\}\]
is a continuous T-system of order $n$ on $\cX = [a,b]$. We can also allow $\cX = (a,b)$, $[a,\infty)$, $(-\infty,b)$, \dots .\exmsymbol
\end{exm}

\begin{exm}[see e.g.\ {\cite[p.\ 10]{karlinStuddenTSystemsBook}}]\label{exm:scaling}
Let $n\in\nset$ and $\cF = \{f_i\}_{i=0}^n$ be a (continuous) T-systems on $\cX\subseteq\rset$ with $|\cX|\geq n+1$.
Then for any (continuous) function $r:\cX\to (0,\infty)$ the family $\{r\cdot f_i\}_{i=0}^n$ is a (continuous) T-system.\exmsymbol
\end{exm}

\begin{exm}
Let $n\in\nset$, $\{f_i\}_{i=0}^n$ be a (continuous) T-system of order $n$ on $\cX\subseteq\rset$ and let $g:\cY\subseteq\rset\to\cX$ be a strictly increasing (continuous) function. 
Then $\{f_i\circ g\}_{i=0}^n$ is a (continuous) T-systems of order $n$ on $\cY$.
\exmsymbol
\end{exm}

\subsection{Non-Negativity, Zeros, and Determinantal Representations of Polynomials in T-Systems}

\begin{thm}[see e.g.\ {\cite[p.\ 20]{karlinStuddenTSystemsBook}} or {\cite[p.\ 33]{kreinMarkovMomentProblem}}]\label{thm:detRepr}
Let $n\in\nset$, $\cF = \{f_i\}_{i=0}^n$ be a T-system on some set $\cX$ with $|\cX|\geq n+1$, $x_1,\dots,x_n\in\cX$ be $n$ distinct points, and let $f\in\lin\cF$ be a polynomial. The following are equivalent:
\begin{enumerate}[(i)]
\item $f(x_i)=0$ holds for all $i=1,\dots,n$.

\item There exists a constant $c\in\rset$ such that
\[f(x) = c\cdot\det\begin{pmatrix}
f_0 & f_1 & \dots & f_n\\ x & x_1 & \dots & x_n
\end{pmatrix}.\]
\end{enumerate}
\end{thm}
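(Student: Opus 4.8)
The plan is to exhibit one explicit nonzero polynomial in $\lin\cF$ whose zeros are exactly $x_1,\dots,x_n$, namely the determinant appearing in (ii), and then to invoke the defining property of a T-system (at most $n$ zeros) to force every polynomial with those zeros to be a scalar multiple of it.

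First I would set
\[
D(x) := \det\begin{pmatrix} f_0 & f_1 & \dots & f_n \\ x & x_1 & \dots & x_n \end{pmatrix}
\]
and expand this determinant along its first row (the row $f_0(x),\dots,f_n(x)$ in the notation of \Cref{dfn:kreinMatrix}). This writes $D(x) = \sum_{i=0}^n c_i\cdot f_i(x)$, where the coefficients $c_i$ are the signed $n\times n$ minors built from the rows associated to $x_1,\dots,x_n$; hence $D\in\lin\cF$. Substituting $x = x_j$ for any $j\in\{1,\dots,n\}$ makes the first row coincide with the $(j+1)$-th row, so $D(x_j)=0$. This already yields the easy direction (ii) $\Rightarrow$ (i): if $f = c\cdot D$ then $f(x_j) = c\cdot D(x_j) = 0$ for all $j$.

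Next I would record that $D$ is not the zero polynomial, and in a usable form. Since $|\cX|\geq n+1$, pick a point $x_0\in\cX$ distinct from $x_1,\dots,x_n$. Then $D(x_0)$ is exactly the determinant of the matrix associated to the $n+1$ pairwise distinct points $x_0,x_1,\dots,x_n$, which is nonzero by \Cref{lem:determinant} applied to the T-system $\cF$. Thus $D(x_0)\neq 0$.

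For the converse (i) $\Rightarrow$ (ii) I would fix such an $x_0$ and put $c := f(x_0)/D(x_0)$, which is well defined by the previous step. Then $h := f - c\cdot D \in \lin\cF$ vanishes at $x_1,\dots,x_n$ (both $f$ and $D$ do) and also at $x_0$ by the choice of $c$, so $h$ has the $n+1$ pairwise distinct zeros $x_0,x_1,\dots,x_n$. Since $\cF$ is a T-system of order $n$ (\Cref{dfn:tSystem}), any polynomial with more than $n$ zeros must be trivial, whence $h\equiv 0$ and $f = c\cdot D$; the degenerate case $f\equiv 0$ is subsumed by $c=0$. The cofactor expansion is routine; the one point to get right is the nonvanishing $D(x_0)\neq 0$, which is what simultaneously lets us divide to define $c$ and manufacture the $(n+1)$-st zero that collapses $h$ to zero. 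This is exactly where the T-system hypothesis (through \Cref{lem:determinant}) does the real work, so I expect that step to carry the whole argument.
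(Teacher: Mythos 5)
Your proof is correct and follows essentially the same route as the paper: exhibit the determinant as a nonzero element of $\lin\cF$ vanishing at $x_1,\dots,x_n$, adjoin an extra point $x_0$ where the determinant is nonzero by \Cref{lem:determinant}, and match values there to force equality. The only cosmetic difference is that you conclude via the zero-count bound in \Cref{dfn:tSystem} directly (the difference $h$ would have $n+1$ zeros), whereas the paper cites the interpolation uniqueness statement — the same mechanism, and your version is, if anything, slightly more self-contained.
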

\begin{proof}
\underline{(ii) $\Rightarrow$ (i):} Clear.

\underline{(i) $\Rightarrow$ (ii):}
If $f=0$ then $c=0$ so the assertion holds.
If $f\neq 0$ then there is a $x_0\in\cX\setminus\{x_1,\dots,x_n\}$ such that $f(x)\neq 0$.
Then also the determinant in (ii) is non-zero and we can choose $c$ such that both $f$ and the scaled determinant coincide also in $x_0$.
By \Cref{cor:uniqueDeter} a polynomial is uniquely determined by $x_0,\dots,x_n$ which shows that (ii) is one and hence the only possible polynomial which fulfills (i).
\end{proof}

So far we treated general T-systems. For further properties we go to continuous T-systems.

\begin{dfn}
Let $n\in\nset$, $\cF$ be a continuous T-system on $\cX\subseteq\rset$ an interval, $f\in\lin\cF$, and let $x_0$ be a zero of $f$. Then $x_0\in\inter\cX$ is called a \emph{non-nodal} zero if $f$ does not change sign at $x_0$. Otherwise the zero $x_0$ is called \emph{nodal}, i.e., either $f$ changes signs at $x_0$ or $x_0$ is a boundary point of $\cX$.
\end{dfn}

The following result bounds the number of nodal and non-nodal zeros.

\begin{thm}[see {\cite{krein51}} or e.g.\ {\cite[p.\ 34, Thm.\ 1.1]{kreinMarkovMomentProblem}}]\label{thm:zeros1}
Let $n\in\nset$, $\cF$ be a continuous T-system of order $n$ on $\cX = [a,b]$ with $a<b$.

If $f\in\lin\cF$ has $k\in\nset_0$ non-nodal zeros and $l\in\nset_0$ nodal zeros in $\cX$ then $2k + l\leq n$.
\end{thm}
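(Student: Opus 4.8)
The plan is to argue by contradiction through a perturbation that \emph{splits} each non-nodal zero into two genuine sign changes. Assume $2k+l\geq n+1$. Since $f$ is a nontrivial element of a T-system of order $n$, it has at most $n$ zeros by \Cref{dfn:tSystem}, so in particular the total number $k+l$ of distinct zeros satisfies $k+l\leq n$. The idea is to produce from $f$ a nearby nontrivial polynomial $f_\varepsilon\in\lin\cF$ that possesses at least $2k+l\geq n+1$ distinct zeros, which contradicts the defining property of a T-system and forces $2k+l\leq n$.

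Let $y_1,\dots,y_k\in\inter\cX$ be the non-nodal zeros and $z_1,\dots,z_l$ the nodal zeros. Since the zero set of $f$ is finite, each $y_j$ is isolated, so $f$ keeps a constant sign $\sigma_j\in\{+1,-1\}$ on a punctured neighborhood of $y_j$. First I would construct the perturbing polynomial $g\in\lin\cF$ by prescribing $g(z_i)=0$ for $i=1,\dots,l$ and $g(y_j)=\sigma_j$ for $j=1,\dots,k$. These are $k+l\leq n$ interpolation conditions at distinct points; completing them to $n+1$ points (possible since $[a,b]$ is infinite) and invoking the unique interpolation property of T-systems established above yields such a $g$.

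Now set $f_\varepsilon:=f-\varepsilon g$ for small $\varepsilon>0$ and analyze its sign. At every nodal zero $z_i$ we have $f_\varepsilon(z_i)=f(z_i)-\varepsilon g(z_i)=0$, so all $l$ nodal zeros are preserved exactly; this handles boundary zeros uniformly, without needing a sign change there. At each non-nodal zero $y_j$ we have $f_\varepsilon(y_j)=-\varepsilon\sigma_j$, of sign $-\sigma_j$, whereas at two points $y_j^-<y_j<y_j^+$ inside the punctured neighborhood $f$ has sign $\sigma_j$ with magnitude bounded below; hence for $\varepsilon$ small enough $f_\varepsilon(y_j^\pm)$ still has sign $\sigma_j$. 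Thus $f_\varepsilon$ changes sign on both $(y_j^-,y_j)$ and $(y_j,y_j^+)$, producing two zeros near $y_j$. Choosing the neighborhoods pairwise disjoint and disjoint from the $z_i$ makes all $2k+l$ zeros distinct; moreover $f_\varepsilon$ is nonzero for all but at most one value of $\varepsilon$ (as $f=\varepsilon g$ can hold for at most one $\varepsilon$ when $f\neq 0$). Fixing one admissible small $\varepsilon$, finitely many smallness conditions being involved, yields a nontrivial $f_\varepsilon\in\lin\cF$ with at least $n+1$ distinct zeros, contradicting \Cref{dfn:tSystem}.

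The step I expect to be the main obstacle is the sign bookkeeping that makes the count exact: one must verify that the perturbation really creates \emph{two} sign changes at each non-nodal interior zero while destroying none of the nodal zeros, and that all the resulting zeros are genuinely distinct. The choice $g(z_i)=0$ is what makes boundary zeros (which cannot be detected by a sign change) survive, while matching $g(y_j)$ with the local sign $\sigma_j$ of $f$ is what guarantees a doubling rather than a mere displacement of the single zero. Once these local pictures are pinned down and a uniform $\varepsilon$ is fixed, the contradiction with the T-system property is immediate.
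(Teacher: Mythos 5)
Your proposal is correct and follows essentially the same route as the paper: you build an interpolating polynomial $g\in\lin\cF$ vanishing at the nodal zeros and matching the local sign of $f$ at the non-nodal ones, subtract a small multiple $\varepsilon g$, and count the resulting $2k+l$ distinct zeros of the (nontrivial) perturbation against \Cref{dfn:tSystem}. The paper's $g_1$ with values $\pm m$ and scaling factor $\rho$ play exactly the roles of your $g$ and $\varepsilon$, so the two arguments differ only in bookkeeping.
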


The proof is adapted from {\cite[Thm.\ 1.1]{kreinMarkovMomentProblem}}.

\begin{proof}
\underline{$\cX=[a,b]$ and $k=0$:} If $f\in\lin\cF$ has $l$ zeros then $n \geq l$ by \Cref{dfn:tSystem}.

\underline{$\cX=[a,b]$ and $k\geq 1$:} Let $x_1,\dots,x_n\in\cX$ be the zeros of $f$. Set
\[M_i := \max_{x_{i-1} \leq x\leq x_i} |f(x)|\]
for $i=1,\dots,k+l+1$ with $t_0 = a$ and $x_{k+l+1}=b$. Additionally, set
\[m := \frac{1}{2}\min_{i=1,\dots,k+l+1} M_i >0.\]

We construct a polynomial $g_1\in\lin\cF$ such that
\begin{enumerate}[\qquad (a)]
\item $g_1$ has the values $m$ at the non-nodal zeros $x_i$ of $f$ with $f\geq 0$ in a neighborhood of $x_i$,

\item $g_1$ has the values $-m$ at the non-nodal zeros $x_i$ of $f$ with $f\leq 0$ in a neighborhood of $x_i$, and

\item $g_1$ vanishes at all nodal zeros $x_i$.
\end{enumerate}
After renumbering the $x_i$'s we can assume $x_1,\dots,x_{k_1}$ fulfill (a), $x_{k_1+1},\dots,x_{k_1+k_2}$ fulfill (b), and $x_{k_1+k_2+1},\dots,x_{k_1+k_2+l}$ fulfill (c) with $k_1+k_2=k$. By \Cref{dfn:tSystem} we have $k+l\leq n$ and hence by \Cref{lem:determinant} we have that
\[\begin{pmatrix}
m\\ \vdots\\ m\\ -m\\ \vdots\\ -m\\ 0\\ \vdots\\ 0
\end{pmatrix} = \begin{pmatrix}
f_0(x_1) & f_1(x_1) & \dots & f_n(x_1)\\
\vdots & \vdots & & \vdots\\
f_0(x_{k+l}) & f_1(x_{k+l}) & \dots & f_n(x_{k+l})
\end{pmatrix}\cdot \begin{pmatrix}
\beta_0\\ \vdots\\ \beta_n
\end{pmatrix} \]
has at least one solution, say $\beta_0 = b_0$, \dots, $\beta_n = b_n$. Then $g_1 = \sum_{i=0}^n b_i\cdot f_i\in\lin\cF $ fulfills (a) to (c).

Set
\[\rho := \frac{m}{2\cdot \|g_1\|_\infty}\]
and let $g_2 := f - g_1$.

We show that to each non-nodal zero $x_i$ of $f$ there correspond two zeros of $g_2$:
Let $x_i$ be a non-nodal zero of $f$ with $f\geq 0$ in a neighborhood of $x_i$, say.
We can find a point $y_i\in (x_{i-1},x_i)$ and a point $y_{i+1}\in (x_i,x_{i+1})$ such that
\[f(y_i) = M_i > m \qquad\text{and}\qquad f(y_{i+1}) = M_{i+1} > m.\]
Therefore, $g_2(y_i) > 0$ and $g_2(y_{i+1}) > 0$. Since $g_2(x_i) = -\rho\cdot m < 0$ if follows that $g_2$ as a zero both in $(y_i,x_i)$ and $(x_i,y_{i+1})$.

Additionally, $g_2$ also vanishes at all nodal zeros of $f$ and so has at least $2k+l$ distinct zeros. Therefore, by \Cref{dfn:tSystem} we have $2k+l\leq n$.
\end{proof}

\begin{cor}\label{cor:zeros1}
\Cref{thm:zeros1} also holds for sets $\cX\subseteq\rset$ of the form
\begin{enumerate}[(i)]
\item $\cX = (a,b)$, $[a,b)$, $(a,b]$ with $a<b$,

\item $\cX = (a,\infty)$, $[a,\infty)$, $(-\infty,b)$, $(-\infty,b]$,

\item $\cX = \{x_1,\dots,x_k\}\subseteq\rset$ with $k\geq n+1$ and $x_1 < \dots < x_k$, and

\item countable unions of (i) to (iii).
\end{enumerate}
\end{cor}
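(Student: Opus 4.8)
The plan is to re-examine the proof of \Cref{thm:zeros1} and isolate where the compactness of $[a,b]$ actually entered; everything else there rests only on \Cref{dfn:tSystem} and \Cref{lem:determinant}, which hold for any $\cX\subseteq\rset$ with $|\cX|\geq n+1$. I would first record that we may take $f\neq 0$, so that by \Cref{dfn:tSystem} its zeros $z_1<\dots<z_m$ are finite in number, isolated, and satisfy $m=k+l\leq n$.

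For the interval cases (i) and (ii) I would simply reduce to the already-proven compact case. Since all $m$ zeros lie in a bounded portion of $\cX$, I can choose a compact interval $[a',b']\subseteq\cX$ containing every zero, selecting $a',b'$ so that an endpoint of $\cX$ which belongs to $\cX$ is kept (it stays a boundary point, hence nodal), while an open end is pushed strictly inside past the extreme zeros (so interior zeros remain interior and no spurious boundary zero is created). By \Cref{cor:restriction} the restricted family is a T-system of order $n$ on $[a',b']$, the nodal/non-nodal type of each zero is unchanged because it is a local notion, and \Cref{thm:zeros1} yields $2k+l\leq n$.

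For the finite case (iii) the interior $\inter\cX$ is empty in $\rset$, so there are no non-nodal zeros, $k=0$, and $2k+l=l\leq n$ follows at once from \Cref{dfn:tSystem}. The remaining case (iv), and indeed all cases uniformly, I would treat by repeating the perturbation step directly: using \Cref{lem:determinant} one finds $g_1\in\lin\cF$ with $g_1(z_i)=\pm 1$ at each non-nodal zero $z_i$ (the sign matching that of $f$ near $z_i$) and $g_1(z_j)=0$ at each nodal zero, a system solvable since it imposes only $m\leq n$ conditions. Setting $g_2:=f-\rho\,g_1$, each non-nodal $z_i$ is interior, hence carries an interval neighborhood inside $\cX$ on which $f$ has strict constant sign away from $z_i$; choosing witnesses $y_i^-<z_i<y_i^+$ in $\cX$ with $f(y_i^\pm)\neq 0$, while $g_2(z_i)=-\rho$ has the opposite sign, forces $g_2$ to vanish on both sides of $z_i$. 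Combined with the persistence $g_2(z_j)=0$ at nodal zeros, $g_2$ has at least $2k+l$ distinct zeros in $\cX$, so \Cref{dfn:tSystem} again gives $2k+l\leq n$.

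The main obstacle is purely the global and topological bookkeeping, not any new idea. For unbounded $\cX$ the quantity $\|g_1\|_\infty$ used in the original argument may be infinite, so I would not fix $\rho$ through a global norm but choose it small relative only to the finitely many local values $f(y_i^\pm)$, which is all the sign argument needs. Across the various endpoint conventions one must check that the rule ``boundary point $\Rightarrow$ nodal'' is respected: a zero sitting at an included endpoint of $\cX$ must not be promoted to an interior (non-nodal) zero, and an isolated point of $\cX$ contributes no non-nodal zero at all. Verifying that the restriction in (i) and (ii) and the interior hypothesis in the perturbation step both honor this classification is the only delicate point.
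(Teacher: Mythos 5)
Your proposal is correct and follows essentially the same route as the paper: reduce the interval cases to \Cref{thm:zeros1} on a compact subinterval containing all zeros, and rerun the perturbation argument for the remaining cases. The paper's own proof consists of exactly this reduction (for $[0,\infty)$, restricting to $[0,x_k+1]$) followed by the instruction to ``adapt (if necessary)'' the proof of \Cref{thm:zeros1}; you have merely carried out that adaptation explicitly, including the genuinely needed replacement of the global bound $\|g_1\|_\infty$ by local bounds at the finitely many witness points.
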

\begin{proof}
\underline{$\cX=[0,\infty)$:} Let $0\leq x_1 < \dots < x_k$ be the zeros of $f$ in $[0,\infty)$. Since every T-system on $[0,\infty)$ is also a T-system on $[0,b]$ for any $b>0$ the claim follows from \Cref{thm:zeros1} with $b=x_k + 1$.

For the other assertions adapt (if necessary) the proof of \Cref{thm:zeros1}.
\end{proof}

\begin{dfn}\label{dfn:index}
Let $x\in [a,b]$ with $a\leq b$. We define the \emph{index} $\varepsilon(x)$ by
\[\varepsilon(x) := \begin{cases}
2 &\text{if}\ x\in (a,b),\\
1 &\text{if}\ x=a\ \text{or}\ b.
\end{cases}\]
The same definition holds for sets $\cX$ as in \Cref{cor:zeros1}.
\end{dfn}

\begin{dfn}
Let $n\in\nset$ and $\cF$ be a T-system of order $n$ on some set $\cX$. We define
\begin{align*}
(\lin\cF)^e &:= \left\{ \sum_{i=0}^n a_i\cdot f_i \,\middle|\, \sum_{i=0}^n a_i^2 = 1\right\},\\
(\lin\cF)_+ &:= \left\{ f\in\cF \,\middle|\, f\geq 0\ \text{on}\ \cX\right\},
\intertext{and}
(\lin\cF)_+^e &:= (\lin\cF)^e \cap (\lin\cF)_+.
\end{align*}
\end{dfn}

With these definitions we can prove the following existence criteria for non-negative polynomials in a T-systems on $[a,b]$.

\begin{thm}\label{thm:zeros2}
Let $n\in\nset$, $\cF$ be a continuous T-system on $\cX=[a,b]$, and $x_1,\dots,x_m\in\cX$.
The following are equivalent:
\begin{enumerate}[(i)]
\item The points $x_1,\dots,x_m$ are zeros of a non-negative polynomial $f\in\lin\cF$.

\item $\displaystyle\sum_{i=1}^m \varepsilon(x_i) \leq n$.
\end{enumerate}
\end{thm}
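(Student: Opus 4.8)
The plan is to prove the two implications separately, using \Cref{thm:zeros1} for the easy direction (i) $\Rightarrow$ (ii) and a coalescence--compactness argument for the substantive direction (ii) $\Rightarrow$ (i).

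For (i) $\Rightarrow$ (ii): suppose $f\in\lin\cF$ is non-negative on $[a,b]$ and vanishes at $x_1,\dots,x_m$. Since $f\geq 0$ it cannot change sign, so every target lying in $(a,b)$ is a non-nodal zero of $f$, while every target equal to $a$ or $b$ is nodal by definition. If $f$ has $k$ non-nodal and $l$ nodal zeros in $[a,b]$ in total, then the number of interior targets is at most $k$ and the number of boundary targets is at most $l$; hence $\sum_i \varepsilon(x_i) = 2\cdot(\#\text{interior targets}) + (\#\text{boundary targets}) \leq 2k+l \leq n$, where the last inequality is exactly \Cref{thm:zeros1}.

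For (ii) $\Rightarrow$ (i): the idea is to realize the desired non-negative polynomial as a limit of polynomials with $n$ simple zeros, letting symmetric pairs of simple zeros coalesce into sign-preserving (non-nodal) double zeros at the interior targets. Concretely, I would choose, for small $\delta>0$, a set of exactly $n$ distinct nodes in $[a,b]$: a symmetric pair $x_i\pm\delta$ around each interior target, a single node at each boundary target, and additional interior pairs for padding so that the total is $n$. By \Cref{thm:detRepr} the polynomial $f_\delta\in\lin\cF$ vanishing at these $n$ nodes is unique up to scaling and equals the $(n+1)\times(n+1)$ T-system determinant; by \Cref{lem:determinant} it vanishes exactly at the nodes, so each node is a simple, sign-changing zero and the sign of $f_\delta$ alternates between consecutive nodes. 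Normalizing the coefficient vector to the unit sphere $\{\sum_{i=0}^n a_i^2 = 1\}$ and passing to a convergent subsequence as $\delta\to 0$, compactness yields a nonzero limit $f_0\in(\lin\cF)^e$ with $f_\delta\to f_0$ uniformly on $[a,b]$; since $f_\delta$ vanishes at $x_i\pm\delta$ (resp.\ at the boundary nodes), continuity forces $f_0(x_i)=0$ at every target. Finally, fixing the overall sign once on the interval to the right of all nodes, the two sign flips incurred across each coalescing pair cancel, so in the limit $f_0$ keeps a constant sign on $(a,b)$ away from the now non-nodal interior zeros; after a global sign change this gives $f_0\geq 0$ with the prescribed zeros.

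The step I expect to be the main obstacle is the bookkeeping that forces the node count to equal $n$ while keeping the limit non-negative, together with the attendant parity issue. Padding with interior pairs changes the count only by even numbers, so when both endpoints are targets and $n$ is odd the naive count cannot reach $n$; in that case I would cluster one extra node next to a boundary target (two nodes coalescing at an endpoint), which preserves the sign on the interior side and absorbs the odd surplus. Verifying that such coalescence genuinely produces non-nodal interior zeros---rather than an isolated interior sign change that would destroy non-negativity---and that the global sign can be chosen consistently across all intervals, is the delicate part; the alternation of the determinant's sign obtained via \Cref{lem:determinant}, combined with \Cref{thm:zeros1} applied to $f_0$ to exclude extraneous zeros, is what makes the sign analysis go through.
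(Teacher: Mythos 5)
Your proposal is correct and follows essentially the same route as the paper: both directions match, with (i) $\Rightarrow$ (ii) reduced to \Cref{thm:zeros1} and (ii) $\Rightarrow$ (i) obtained from determinantal polynomials whose $n$ simple nodes cluster pairwise at the interior targets, normalized in $(\lin\cF)^e$ and passed to a limit by compactness, with non-negativity read off from the alternating signs between nodes. The only deviations are cosmetic — symmetric pairs $x_i\pm\delta$ instead of the paper's one-sided pairs $x_i,x_i^{(j)}$, and your explicit parity bookkeeping via an extra node at an endpoint, which the paper handles implicitly through its Cases II and III.
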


The proof is adapted from {\cite[p.\ 35, Thm.\ 1.2]{kreinMarkovMomentProblem}}.

\begin{proof}
``(i) $\Rightarrow$ (ii)'' is \Cref{thm:zeros1} and we therefore only have to prove ``(ii) $\Rightarrow$ (i)''.

\underline{Case I:} At first assume that $a < x_1 < \dots < x_m < b$ and $\sum_{i=0}^m \varepsilon(x_i) = 2m = n$.
If $2m < n$ then add $k$ additional points $x_{m+1},\dots,x_{m+k}$ such that $2m + 2k = n$ and $x_m < x_{m+1} < \dots < x_{m+k} < b$.

Select a sequence of points $(x_1^{(j)},\dots,x_m^{(j)})\in\rset^m$, $j\in\nset$, such that
\[a < x_1 < x_1^{(j)} < \dots < x_m < x_m^{(j)} < b\]
for all $j\in\nset$ and $\lim_{j\to\infty} x_i^{(j)} = x_i$ for all $i=1,\dots,m$. Set
\begin{equation}\label{eq:gjdfns}
g_j(x) := c_j\cdot \det\begin{pmatrix}
f_0 & f_1 & f_2 & \dots & f_{2m-1} & f_{2m}\\
x & x_1 & x_1^{(j)} & \dots & x_m & x_m^{(j)}
\end{pmatrix} \quad\in (\lin\cF)^e
\end{equation}
for some $c_j>0$.
Since $(\lin\cF)^e$ is compact we can assume that $g_j$ converges to some $g_0\in (\lin\cF)^e$.
Then $g_0$ has $x_1,\dots,x_m$ as zeros with $\varepsilon(x_i)=2$ and $g_0$ is non-negative since $g_j>0$ on $[a,x_1)$, $(x_1^{(j)},x_2)$, \dots, $(x_{m-1}^{(j)}, x_m)$, and $(x_m^{(j)},b]$ as well as $g_j<0$ on $(x_1,x_1^{(j)})$, $(x_2,x_2^{(j)})$, \dots, $(x_m,x_m^{(j)})$.

\underline{Case II:}
If $a = x_1 < x_2 < \dots < x_m < b$ with $\sum_{i=1}^m \varepsilon(x_i) = 2m-1 = n$ the only modification required in case I is to replace (\ref{eq:gjdfns}) by
\[g_j(x) := -c_j\cdot \det \begin{pmatrix}
f_0 & f_1 & f_2 & f_3 & \dots & f_{2m-2} & f_{2m-1}\\
x & a & x_2 & x_2^{(j)} & \dots & x_m & x_m^{(j)}
\end{pmatrix} \quad\in (\lin\cF)^e\]
with some normalizing factor $c_j > 0$.

\underline{Case III:}
The procedure is similar if $x_m = b$ and $\sum_{i=1}^m \varepsilon(x_i) = n$.
\end{proof}

\begin{rem}\label{rem:kreinError}
\Cref{thm:zeros2} appears in {\cite[p.\ 35, Thm.\ 1.2]{kreinMarkovMomentProblem}} in a stronger version.
In {\cite[p.\ 35, Thm.\ 1.2]{kreinMarkovMomentProblem}} Krein claims that the $x_1,\dots,x_m$ are the only zeros of some non-negative $f\in\lin\cF$.
This holds when $n=2m + 2p$ for some $p>0$.
To see this add to $x_1,\dots,x_m$ in (\ref{eq:gjdfns}) points $x_{m+1},\dots,x_{m+p}\in\inter\cX\setminus\{x_1,\dots,x_m\}$ and get $g_0$.
Hence, $g_0\geq 0$ has exactly the zeros $x_1,\dots,x_{m+p}$.
Then construct in a similar way $g_0'$ with the zeros $x_1,\dots,x_m,x'_{m+1},\dots,x'_{m+p}$ with $x'_{m+1},\dots,x'_{m+p}\in\inter\cX\setminus\{x_1,\dots,x_{m+p}\}$.
Hence, $g_0 + g_0'\geq 0$ has only the zeros $x_1,\dots,x_m$.

A similar construction works for $n=2m+1$ with or without end points $a$ or $b$.

However, Krein misses that for $n=2m$ and when one end point is contained in $x_1,\dots,x_m$ then it might happen that also the other end point must appear.
In \cite[p.\ 28, Thm.\ 5.1]{karlinStuddenTSystemsBook} additional conditions are given which ensure that $x_1,\dots,x_m$ are the only zeros of some $f\geq 0$.

For example if also $\{f_i\}_{i=0}^{n-1}$ is a T-system then is can be ensured that $x_1,\dots,x_m$ are the only zeros of some non-negative polynomial $f\in\lin\cF$, see \cite[p.\ 28, Thm.\ 5.1 (b-i)]{karlinStuddenTSystemsBook}.
For our main example(s), the algebraic polynomials with gaps, this holds.

The same problem appears in \cite[p.\ 36, Thm.\ 1.3]{kreinMarkovMomentProblem}.
A weaker but correct version is given in \Cref{thm:zeros3} below.\exmsymbol
\end{rem}

\begin{rem}\label{rem:doublezeros}
Assume that in \Cref{thm:zeros2} we have additionally that $f_0,\dots,f_n\in C^1([a,b])$. Then in (\ref{eq:gjdfns}) we can set $x_i^{(j)} = x_i + j^{-1}$ for all $i=0,\dots,m$ and $j\gg 1$. For $j\to\infty$ with $c_j = j^m$ we then get
\begin{align}
g_0(x) &= \lim_{j\to\infty} j^m\cdot\det \begin{pmatrix}
f_0 & f_1 & f_2 & \dots & f_{2m-1} & f_{2m}\\
x & x_1 & x_1 + j^{-1} & \dots & x_m & x_m + j^{-1}
\end{pmatrix}\notag\\
&= \lim_{j\to\infty} j^m\cdot\det \begin{pmatrix}
f_0(x) & \dots & f_{2m}(x)\\
f_0(x_1) & \dots & f_{2m}(x_1)\\
f_0(x_1+j^{-1}) & \dots & f_{2m}(x_1+j^{-1})\\
\vdots & & \vdots\\
f_0(x_m) & \dots & f_{2m}(x_m)\\
f_0(x_m+j^{-1}) & \dots & f_{2m}(x_m+j^{-1})\\
\end{pmatrix}\notag\\
&= \lim_{j\to\infty} \det \begin{pmatrix}
f_0(x) & \dots & f_{2m}(x)\\
f_0(x_1) & \dots & f_{2m}(x_1)\\
\frac{f_0(x_1+j^{-1})-f_0(x_1)}{j^{-1}} & \dots & \frac{f_{2m}(x_1+j^{-1})-f_{2m}(x_1)}{j^{-1}}\\
\vdots & & \vdots\\
f_0(x_m) & \dots & f_{2m}(x_m)\\
\frac{f_0(x_m+j^{-1})-f_0(x_m)}{j^{-1}} & \dots & \frac{f_{2m}(x_m+j^{-1})-f_{2m}(x_m)}{j^{-1}}
\end{pmatrix}\label{eq:generalg0construction}\\
&= \begin{pmatrix}
f_0(x) & \dots & f_{2m}(x)\\
f_0(x_1) & \dots & f_{2m}(x_1)\\
f_0'(x_1) & \dots & f_{2m}'(x_1)\\
\vdots & & \vdots\\
f_0(x_m) & \dots & f_{2m}(x_m)\\
f_0'(x_m) & \dots & f_{2m}'(x_m)
\end{pmatrix},\notag
\end{align}
i.e., double zeros are included by including the values $f_i'(x_j)$.
Therefore, whenever we have $C^1$-functions in $\cF = \{f_i\}_{i=0}^n$ and $x_i = x_{i+1}$ we define
\begin{equation}\label{eq:doublezeroDfn}
\begin{pmatrix}
f_0 & \dots & f_{i-1} & f_i & f_{i+1} & f_{i+2} & \dots & f_n\\
x_0 & \dots & x_{i-1} & (x_i & x_i) & x_{i+2} & \dots & x_n
\end{pmatrix} := \begin{pmatrix}
f_0(x_0) & \dots & f_n(x_0)\\
\vdots & & \vdots\\
f_0(x_{i-1}) & \dots & f_n(x_{i-1})\\
f_0(x_i) & \dots & f_n(x_i)\\
f_0'(x_i) & \dots & f_n'(x_i)\\
f_0(x_{i+2}) & \dots & f_n(x_{i+2})\\
\vdots & & \vdots\\
f_0(x_n) & \dots & f_n(x_n)
\end{pmatrix}
\end{equation}
and equivalently when $x_j = x_{j+1}$, $x_k = x_{k+1}$, \dots\ for additional entries.
We use the additional brackets ``$($'' and ``$)$'' to indicate that $x_i$ is inserted in the $f_0,\dots,f_n$ and then also into $f_0',\dots,f_n'$ to distinguish (\ref{eq:doublezeroDfn}) from \Cref{dfn:kreinMatrix} to avoid confusion. Hence,
\[\det\begin{pmatrix}
f_0 & \dots & f_{i-1} & f_i & f_{i+1} & f_{i+2} & \dots & f_n\\
x_0 & \dots & x_{i-1} & x_i & x_i & x_{i+2} & \dots & x_n
\end{pmatrix} = 0\]
since in two rows $x_i$ is inserted into $f_0,\dots,f_n$, while in
\[\begin{pmatrix}
f_0 & \dots & f_{i-1} & f_i & f_{i+1} & f_{i+2} & \dots & f_n\\
x_0 & \dots & x_{i-1} & (x_i & x_i) & x_{i+2} & \dots & x_n
\end{pmatrix}\]
indicates that $x_i$ is inserted in $f_0,\dots,f_n$ and then also into $f_0',\dots,f_n'$.

Extending this to zeros of order $k$ for $C^{k+1}$-functions is straight forward and we leave it to the reader to write down the formulas and their proofs.
Similar to (\ref{eq:doublezeroDfn}) we write for any $a\leq x_0 \leq x_1 \leq \dots \leq x_n\leq b$ the matrix as
\[\begin{pmatrix}
f_0 & f_1 & \dots & f_n\\
x_0 & x_1 & \dots & x_n
\end{pmatrix}^*\]
when $f_0,\dots, f_n$ are sufficiently differentiable.

We often want to express polynomials $f\in\lin\cF$ as determinants (\ref{eq:gjdfns}) only by knowing their zeros $x_1,\dots,x_k$. If arbitrary multiplicities appear we only have $x_1 \leq x_2 \leq \dots \leq x_n$ where we include zeros multiple times according to their multiplicities. Hence, for
\[x_0 = \dots = x_{i_1} < x_{i_1+1} = \dots = x_{i_2} < \dots < x_{i_k+1} = \dots = x_n\]
we introduce a simpler notation to write down (\ref{eq:doublezeroDfn}):
\begin{equation}\label{eq:doublezeroDfn2}
\left(\begin{array}{c|cccc}
f_0 & f_1 & f_2 & \dots & f_n\\
x & x_1 & x_2 & \dots & x_n
\end{array}\right) :=
\begin{pmatrix}
f_0 & f_1\; \dots\; f_{i_1} & f_{i_1+1} \;\dots\; f_{i_2} & \dots & f_{i_k+1} \;\dots\; f_{i_k+1}\\
x & (x_1\; \dots\; x_{i_1}) & (x_{i_1+1} \;\dots\; x_{i_2}) & \dots & (x_{i_k+1} \;\dots\; \;\; x_n)\;\;\,
\end{pmatrix}.
\end{equation}
Clearly $(\ref{eq:doublezeroDfn2}) \in\lin\cF$. For (\ref{eq:doublezeroDfn2}) to be well-defined we need $\cF\subset C^{m-1}$ where $m$ is the largest multiplicity of any zero.
However, the procedure (\ref{eq:generalg0construction}) can lead to the zero polynomial.
We have to introduce ET-systems, see \Cref{sec:et} and \Cref{dfn:etSystem}.

In \Cref{thm:zeros2} we did not need the condition $\cF\subset C^m$ for some $m\geq 1$. The limit $g_0$ of the $g_j$ in (\ref{eq:gjdfns}) does not need the unique $f_0',\dots,f_n'$ and therefore the limit needs not to be unique.\exmsymbol
\end{rem}

\begin{cor}\label{cor:zeros2}
\Cref{thm:zeros2} also holds for intervals $\cX\subseteq\rset$, i.e.,
\begin{equation}\label{eq:intervals}
\cX = (a,b),\ (a,b],\ [a,b),\ [a,b],\ (a,\infty),\ [a,\infty),\ (-\infty,b),\ (-\infty,b],\ \text{and}\ \rset \qquad\text{with}\ a<b.
\end{equation}
\end{cor}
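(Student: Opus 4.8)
The plan is to reduce the general interval case to the closed-interval case of \Cref{thm:zeros2}, which is already available. The implication (i) $\Rightarrow$ (ii) I would read off directly from \Cref{cor:zeros1}: if a non-negative $f\in\lin\cF$ vanishes at $x_1,\dots,x_m$, then every $x_i\in\inter\cX$ is a \emph{non-nodal} zero (a non-negative function cannot change sign), while every $x_i$ lying on the boundary of $\cX$ is nodal by definition. Hence these zeros contribute $2\cdot(\#\text{interior})+1\cdot(\#\text{boundary})=\sum_{i=1}^m\varepsilon(x_i)$ to the total count $2k+l\le n$ of \Cref{cor:zeros1}, so $\sum_i\varepsilon(x_i)\le n$.

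For (ii) $\Rightarrow$ (i) I would first pass to a closed subinterval. Order the points $x_1<\dots<x_m$ and choose $a',b'\in\cX$ with $a'<b'$ and $[a',b']\subseteq\cX$ containing all $x_i$, arranged so that the index of each $x_i$ computed in $[a',b']$ equals $\varepsilon(x_i)$ computed in $\cX$: if the left endpoint of $\cX$ is a finite included point $a$ and $x_1=a$, put $a'=a$ so that $x_1$ stays a boundary point; otherwise $x_1\in\inter\cX$ and I pick $a'\in\cX$ with $a'<x_1$, making $x_1$ interior to $[a',b']$. The right endpoint $b'$ is chosen symmetrically. By \Cref{cor:restriction} the restricted family $\{f_i|_{[a',b']}\}_{i=0}^n$ is again a continuous T-system of order $n$, and since the indices agree the hypothesis $\sum_i\varepsilon(x_i)\le n$ persists on $[a',b']$. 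Applying \Cref{thm:zeros2} on $[a',b']$ produces a $g_0\in\lin\cF$, non-negative on $[a',b']$ and vanishing at $x_1,\dots,x_m$; by the construction in that proof (adding interior double zeros inside $(a',b')$ when $\sum_i\varepsilon(x_i)<n$) I may take $g_0$ to attain the full budget $2k+l=n$ with all its zeros inside $[a',b']$.

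The only delicate step is propagating non-negativity of $g_0$ from $[a',b']$ to all of $\cX$, and I expect this to be the main obstacle. The key observation is that $g_0$ has already exhausted the zero budget of \Cref{cor:zeros1}: since $2k+l=n$ is attained inside $[a',b']$, any further zero of $g_0$ in $\cX\setminus[a',b']$ would push the count to at least $n+1$, a contradiction. Thus $g_0$ has constant sign on each complementary interval $\cX\cap(-\infty,a')$ and $\cX\cap(b',\infty)$. When $b'>x_m$ was chosen strictly, $b'$ is not a prescribed zero, so $g_0(b')>0$ and therefore $g_0>0$ on $\cX\cap[b',\infty)$; the only other possibility is that $b'=x_m$ is the finite included right endpoint of $\cX$, in which case $\cX\cap(b',\infty)=\emptyset$. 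The left end is treated identically. Hence $g_0\ge 0$ on all of $\cX$ with the required zeros, giving (i). This reduction sidesteps the direct alternative of re-running the determinantal limit of \Cref{thm:zeros2} on an open or unbounded interval, which would force a separate check of the limiting signs near the non-included or infinite endpoints.
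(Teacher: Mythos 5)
Your proof is correct and follows essentially the same route as the paper: restrict to a compact subinterval, apply \Cref{thm:zeros2} there, and use the zero-count bound of \Cref{cor:zeros1} to rule out further zeros (hence sign changes) of $g_0$ in the rest of $\cX$. If anything, your version is executed more carefully than the paper's one-line argument — you choose $[a',b']$ so that the indices $\varepsilon(x_i)$ are preserved (the paper uses $[\min_i x_i,\max_i x_i]$, which demotes $x_1$ and $x_m$ to endpoints) and you make explicit the sign-propagation step that the paper leaves implicit.
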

\begin{proof}
We have that ``(i) $\Rightarrow$ (ii)'' follows from \Cref{cor:zeros1}.

For ``(ii) $\Rightarrow$ (i)'' we apply \Cref{thm:zeros2} on $[\min_i x_i,\max_i x_i]$ and \Cref{cor:zeros1} assures that no additional zeros appear in $\cX$.
\end{proof}

We will give a sharper version of \Cref{thm:zeros1}, see also \Cref{rem:kreinError}.

\begin{thm}\label{thm:zeros3}
Let $n\in\nset$ and $\cF$ be a continuous T-system on $\cX=[a,b]$.
Additionally, let $x_1,\dots,x_k\in\cX$ and $y_1,\dots,y_l\in\cX$ be pairwise distinct points.
The following are equivalent:
\begin{enumerate}[(i)]
\item There exists a polynomial $f\in\lin\cF$ such that
\begin{enumerate}[(a)]
\item $x_1,\dots,x_k$ are the non-nodal zeros of $f$ and
\item $y_1,\dots,y_l$ are the nodal zeros of $f$.
\end{enumerate}

\item $2k+l \leq n$.
\end{enumerate}
\end{thm}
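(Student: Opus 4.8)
The implication (i) $\Rightarrow$ (ii) is immediate: if $f\in\lin\cF$ has $k$ non-nodal zeros $x_1,\dots,x_k$ and $l$ nodal zeros $y_1,\dots,y_l$, then \Cref{thm:zeros1} gives $2k+l\leq n$. So the whole content is the construction (ii) $\Rightarrow$ (i). My plan is to first settle the \emph{saturated} case $2k+l=n$ and then reduce the general case $2k+l<n$ to it by an addition trick in the spirit of \Cref{rem:kreinError}.

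For the saturated case $2k+l=n$ I would adapt the limiting-determinant construction used in the proof of \Cref{thm:zeros2}. Order all the target points on $[a,b]$; to each non-nodal target $x_i$ attach a companion point $x_i^{(j)}\to x_i$, and to each nodal target $y_r$ assign a single knot, so that for each $j$ one has $n$ distinct knots. Form the normalized determinant $g_j\in(\lin\cF)^e$ built from the matrix in \Cref{dfn:kreinMatrix}. By compactness of $(\lin\cF)^e$ pass to a convergent subsequence $g_j\to g_0\in(\lin\cF)^e$, so in particular $g_0\neq 0$, and $g_0$ vanishes at every target since each $g_j$ does. The sign analysis of \Cref{thm:zeros2} then applies verbatim: the two sign changes of $g_j$ straddling the pair $(x_i,x_i^{(j)})$ coalesce, so $g_0$ keeps its sign across $x_i$ (non-nodal), while $g_0$ still changes sign across each interior $y_r$, and boundary $y_r$ are nodal by definition. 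This already exhibits $k$ non-nodal and $l$ nodal zeros of $g_0$; since $2k+l=n$, \Cref{thm:zeros1} forces equality, so these are \emph{exactly} the non-nodal, resp.\ nodal, zeros of $g_0$ and there are no others. Take $f=g_0$.

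For $2k+l<n$ set $p:=n-2k-l\geq 1$ and build two saturated polynomials $f_1,f_2$ sharing the targets $X=\{x_i\}$ (non-nodal) and $Y=\{y_r\}$ (nodal) but carrying disjoint auxiliary zeros that bring each to weight $n$. If $p$ is even, give each $f_s$ a set $A_s$ of $p/2$ interior non-nodal (touch) zeros with $A_1\cap A_2=\emptyset$ and disjoint from $X\cup Y$. If $p$ is odd, additionally place one auxiliary nodal zero at a boundary point, using $a$ for $f_1$ and $b$ for $f_2$ (when $a,b\notin Y$). Since the touch points $A_s$ and the boundary knots cause no sign change in the open interval $(a,b)$, both $f_1$ and $f_2$ change sign inside $(a,b)$ exactly at the interior points of $Y$; hence I can normalize their signs so that $f_1$ and $f_2$ agree on every subinterval of $[a,b]$ cut out by $Y$. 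Then $f:=f_1+f_2$ vanishes precisely on $X\cup Y$: at each $x_i$ both summands vanish without sign change, at each $y_r$ both vanish with a common sign change, and each auxiliary zero of one summand is killed by the strict positivity (in the aligned sign) of the other. Thus $X$ are the non-nodal and $Y$ the nodal zeros of $f$.

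The main obstacle is exactly the bookkeeping of this last paragraph: guaranteeing that the auxiliary zeros cancel while $X$ remains non-nodal and $Y$ remains nodal. The parity of $p$ decides whether a boundary knot is needed at all, and the argument as sketched presumes \emph{two} free endpoints in the odd case. When an endpoint already belongs to $Y$ — so only one boundary slot is free — the naive addition can fail (an interior auxiliary nodal zero cannot be cancelled, as it breaks sign alignment), and one is forced to produce a genuinely lower-weight polynomial instead; this is precisely the delicate point that Krein overlooked and that \Cref{rem:kreinError} flags. I would therefore isolate these endpoint-in-$Y$ configurations and handle them by a separate sign-aligned combination, or, where it applies, by the refinement that a truncated sub-family $\{f_i\}_{i=0}^{m}$ is again a T-system \cite[p.\ 28, Thm.\ 5.1]{karlinStuddenTSystemsBook}, since this is where the construction is least routine.
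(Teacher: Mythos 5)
Your implication (i) $\Rightarrow$ (ii) and your treatment of the saturated case $2k+l=n$ coincide with the paper's proof: the paper likewise adapts the determinants $g_j$ from (\ref{eq:gjdfns}), inserting each nodal target only once (treating it like an endpoint) and each non-nodal target together with a companion sequence tending to it, and the count $2k+l=n$ combined with \Cref{thm:zeros1} is what rules out extra zeros or a nodal target degenerating into a non-nodal one in the limit. Where you genuinely depart from the paper is the deficient case $2k+l<n$. The paper's proof is a one-line ``adapt accordingly'' and does not address it, even though the determinant of \Cref{dfn:kreinMatrix} requires exactly $n$ knots, so some padding is unavoidable and naive padding (as in Case I of \Cref{thm:zeros2}) would introduce additional zeros. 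Your two-summand construction with disjoint auxiliary non-nodal knots, sign-aligned so that each summand's private zeros are absorbed by the strict positivity of the other, is precisely the device of \Cref{rem:kreinError}, and the alignment argument is sound because neither summand changes sign in $(a,b)$ except at the interior points of $Y$.

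The corner case you flag --- odd deficiency $p$ with no free boundary slot because the endpoints already lie in $Y$ --- is a genuine residual gap \emph{if} the statement is read strictly, i.e.\ that $x_1,\dots,x_k$ and $y_1,\dots,y_l$ are exactly the non-nodal resp.\ nodal zeros. This is the same endpoint phenomenon that \Cref{rem:kreinError} attributes to Krein, and the paper's own proof does not resolve it either; closing it requires either the extra hypotheses of \cite[p.\ 28, Thm.\ 5.1]{karlinStuddenTSystemsBook} (e.g.\ that $\{f_i\}_{i=0}^{n-1}$ is again a T-system, which holds in the paper's algebraic applications) or the weaker reading in which the prescribed points are only required to occur among the zeros with the stated nodal/non-nodal character --- under that reading your saturated construction already suffices after padding. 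So your argument is at least as complete as the paper's, and more explicit about exactly the point where the paper is silent.
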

\begin{proof}
\underline{(i) $\Rightarrow$ (ii):} That is \Cref{thm:zeros1}.

\underline{(ii) $\Rightarrow$ (i):} Adapt the proof and especially the $g_j$'s in (\ref{eq:gjdfns}) of \Cref{thm:zeros2} accordingly.
Let $z_1 < \dots < z_{k+l}$ be the $x_i$'s and $y_i$'s together ordered by size.
Then in $g_j$ treat every nodal $z_i$ like the endpoint $a$ or $b$, i.e., include it only once in the determinant, and insert for every non-nodal point $z_i$ the point $z_i$ and the sequence $z_i^{(j)}\in (z_i,z_{i+1})$ with $\lim_{j\to\infty} z_i^{(j)} = z_i$.
\end{proof}

\begin{cor}
\Cref{thm:zeros3} also holds for sets $\cX\subseteq\rset$ of the form
\begin{enumerate}[(i)]
\item $\cX = (a,b)$, $[a,b)$, $(a,b]$ with $a<b$,

\item $\cX = (a,\infty)$, $[a,\infty)$, $(-\infty,b)$, $(-\infty,b]$,

\item $\cX = \{x_1,\dots,x_k\}\subseteq\rset$ with $k\geq n+1$ and $x_1 < \dots < x_k$, and

\item countable unions of (i) to (iii).
\end{enumerate}
\end{cor}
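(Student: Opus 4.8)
The plan is to mirror the proofs of \Cref{cor:zeros1} and \Cref{cor:zeros2}, reducing every case to the already established interval case \Cref{thm:zeros3}. The implication ``(i) $\Rightarrow$ (ii)'' needs nothing new: if $f\in\lin\cF$ has $k$ non-nodal and $l$ nodal zeros in $\cX$, then \Cref{cor:zeros1}, the version of \Cref{thm:zeros1} valid for all the sets in (i)--(iv), already yields $2k+l\leq n$. Hence only ``(ii) $\Rightarrow$ (i)'' must be proved, and there the task is to construct a polynomial whose non-nodal zeros are exactly $x_1,\dots,x_k$ and whose nodal zeros are exactly $y_1,\dots,y_l$.

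For the connected cases (i) and (ii) I would first pass to a compact core interval $[c,d]\subseteq\cX$ chosen so that every prescribed point interior to $\cX$ lies in the open interval $(c,d)$, while a prescribed point coinciding with an endpoint of $\cX$ --- necessarily nodal, since boundary points of $\cX$ are nodal by definition --- is placed at the corresponding endpoint $c$ or $d$. This choice guarantees that the nodal/non-nodal status of each prescribed point agrees in $[c,d]$ and in $\cX$, so \Cref{thm:zeros3} applies on $[c,d]$ and yields an $f\in\lin\cF$ with exactly the prescribed zeros in $[c,d]$. Since $f$ is a genuine element of $\lin\cF$, it is defined on all of $\cX$, and it only remains to exclude additional zeros in $\cX\setminus[c,d]$. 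Here \Cref{cor:zeros1} serves as the accounting device: the prescribed zeros already consume the budget $2k+l$, so in the saturated regime $2k+l=n$ no further zero can occur without violating the bound. When $2k+l<n$ I would run the construction already saturated, adding and then cancelling auxiliary interior zeros exactly as in \Cref{rem:kreinError}, thereby reducing to the saturated case. For the discrete case (iii) the determinant/limit construction (\ref{eq:gjdfns}) of \Cref{thm:zeros2} works once the index $\varepsilon$ of \Cref{dfn:index} and the nodal/non-nodal dichotomy are read off from the ordering of the finitely many points; and case (iv) follows by placing the (finitely many) prescribed points jointly into the determinant construction and controlling the total count by \Cref{cor:zeros1}, the zeros of a single T-system polynomial distributing over the components with weighted sum at most $n$.

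The step I expect to be the main obstacle is precisely this interface bookkeeping together with the boundary classification. The delicate point, flagged in \Cref{rem:kreinError}, is that for $n=2m$ with a prescribed endpoint the requirement that these be the \emph{only} zeros can silently force the opposite endpoint to appear as well; the remedy is to work in the saturated regime $2k+l=n$, so that the T-system bound leaves no slack, and to choose $c$ and $d$ so that no prescribed \emph{non-nodal} point is ever pushed to the boundary of $[c,d]$, where it would be misclassified as nodal. Once the saturated construction is in place, the zero count of \Cref{cor:zeros1} pins the zero set down exactly, and the passage from $[c,d]$ to all sets in (i)--(iv) is then routine.
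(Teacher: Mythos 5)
Your overall strategy (reduce to the compact case via a core interval $[c,d]\subseteq\cX$, then use the zero count of \Cref{cor:zeros1} to forbid extra zeros outside it) is genuinely different from the paper's, which simply re-runs the determinantal construction (\ref{eq:gjdfns}) directly on $\cX$, allowing sign changes at the nodal points. Your route is cleaner in the saturated case $2k+l=n$, but one step does not go through as written: the de-saturation for $2k+l<n$. The cancellation device of \Cref{rem:kreinError} rests on the fact that a sum of two \emph{non-negative} polynomials vanishes only at their common zeros. Here the target polynomial has prescribed nodal, i.e.\ sign-changing, zeros, so it is not non-negative, and a sum $f_1+f_2$ of two such polynomials can acquire spurious zeros where $f_1=-f_2\neq 0$. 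To salvage the trick you must additionally argue that the two auxiliary constructions share exactly the same nodal points and hence the same sign on each interval between consecutive nodal points (only then does vanishing of the sum force common vanishing), and when the deficit $n-2k-l$ is odd you cannot saturate with non-nodal (index-$2$) auxiliary points alone, so that parity needs a separate argument. None of this is supplied.

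A second, smaller gap concerns case (iv) and disconnected $\cX$ generally: there is no compact interval $[c,d]\subseteq\cX$ containing all prescribed points, and $\cF$ is only assumed to be a T-system on $\cX$, not on the convex hull of $\cX$, so the reduction to \Cref{thm:zeros3} on a core interval is unavailable there. For that case you are forced back to the paper's route, namely building $g_j$ as in (\ref{eq:gjdfns}) with the prescribed points inserted directly (nodal points entered once, non-nodal points doubled), and checking by hand the classification at the ``internal'' boundary points of the components, which are nodal by \Cref{dfn:index}. Your boundary bookkeeping for the connected cases (i)--(ii), including the observation that a prescribed non-nodal point must not be pushed to the boundary of $[c,d]$, is correct and is the part of the argument worth keeping.
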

\begin{proof}
In the adapted proof and the $g_j$'s in (\ref{eq:gjdfns}) of \Cref{thm:zeros2} we do not need to have non-negativity, i.e., in the $g_j$'s sign changes at the $y_i$'s are allowed (and even required).
\end{proof}

\subsection{ET-Systems}
\label{sec:et}

\begin{dfn}\label{dfn:etSystem}
Let $n\in\nset$ and let $\cF = \{f_i\}_{i=0}^n\subset C^n([a,b])$ be a T-system of order $n$ on $[a,b]$ with $a<b$. $\cF$ is called an \emph{extended Tchebycheff system} (\emph{ET-system}) \emph{of order $n$} if any polynomial $f\in\lin\cF\setminus\{0\}$ has at most $n$ zeros counting algebraic multiplicities.
\end{dfn}

For notation of the matrices
\[\begin{pmatrix}
f_0 & f_1 & \dots & f_n\\
x_0 & x_1 & \dots & x_n
\end{pmatrix}^*\]
for $a\leq x_0 \leq x_1 \leq \dots x_n\leq b$ see the previous \Cref{rem:doublezeros}.

\begin{cor}[\cite{krein51} or e.g.\ {\cite[p.\ 37, P.1.1]{kreinMarkovMomentProblem}}]
Let $n\in\nset$ and $\cF=\{f_i\}_{i=0}^n\subset C^n([a,b])$. The following are equivalent:
\begin{enumerate}[(i)]
\item $\cF$ is an ET-system.

\item We have
\[\det\begin{pmatrix}
f_0 & f_1 & \dots & f_n\\
x_0 & x_1 & \dots & x_n
\end{pmatrix}^* \neq 0\]
for every $a\leq x_0 \leq x_1 \leq \dots \leq x_n\leq b$.
\end{enumerate}
\end{cor}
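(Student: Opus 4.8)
The plan is to mirror the proof of \Cref{lem:determinant} almost verbatim, replacing the ordinary evaluation matrix of \Cref{dfn:kreinMatrix} by the confluent matrix $(\cdots)^*$ introduced in \Cref{rem:doublezeros} and replacing ``zeros'' by ``zeros counting algebraic multiplicity''. The conceptual bridge is the dictionary between the rows of the $*$-matrix and vanishing conditions on a polynomial: if a value $x_i$ occurs with multiplicity $m$ in the ordered tuple $a\leq x_0\leq\dots\leq x_n\leq b$, then the corresponding block of $m$ rows encodes $f_j(x_i), f_j'(x_i),\dots,f_j^{(m-1)}(x_i)$ for $j=0,\dots,n$. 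Consequently a coefficient vector $(a_0,\dots,a_n)^T$ lies in the kernel of the $*$-matrix if and only if $f=\sum_{j=0}^n a_j f_j$ has a zero of order at least $m$ at each distinct value $x_i$; this is exactly the content of (\ref{eq:doublezeroDfn}) and the limiting computation (\ref{eq:generalg0construction}), and it is the only place where $\cF\subset C^n([a,b])$ is genuinely needed, since a value may occur with multiplicity up to $n+1$ and thus force derivatives up to order $n$.

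For ``(i) $\Rightarrow$ (ii)'' I would fix $a\leq x_0\leq\dots\leq x_n\leq b$ and argue by contraposition. If the $*$-determinant vanished, its columns would be linearly dependent, producing a nonzero vector $(a_0,\dots,a_n)^T$ in the kernel; by the dictionary above the polynomial $f=\sum_{j=0}^n a_j f_j$ then has at least $n+1$ zeros counting multiplicity on $[a,b]$. Since an ET-system is in particular a T-system, \Cref{cor:uniqueDeter} yields linear independence of the $f_i$, so $f\neq 0$, contradicting the defining ET-bound of at most $n$ zeros counting multiplicity. Hence the determinant does not vanish.

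For ``(ii) $\Rightarrow$ (i)'' I would first observe that restricting to pairwise distinct $x_0<\dots<x_n$ collapses the $*$-matrix to the ordinary matrix of \Cref{dfn:kreinMatrix}, so (ii) together with \Cref{lem:determinant} shows that $\cF$ is a T-system. To obtain the multiplicity bound, suppose some $f=\sum_{j=0}^n a_j f_j\neq 0$ had more than $n$ zeros counting multiplicity; listing the first $n+1$ of them as $z_0\leq\dots\leq z_n$ (each distinct zero repeated according to its order), the vanishing of $f$ and the appropriate derivatives at the $z_i$ places the nonzero vector $(a_0,\dots,a_n)^T$ in the kernel of the $*$-matrix built from $z_0,\dots,z_n$, forcing its determinant to vanish and contradicting (ii).

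The main obstacle I anticipate is not the logical skeleton, which is identical to \Cref{lem:determinant}, but making the multiplicity bookkeeping precise: one must verify that the limiting procedure (\ref{eq:generalg0construction}) correctly identifies the derivative rows with a zero of the prescribed order, and that a zero of order $m$ contributes exactly $m$ rows and no more. I would handle a single repeated value carefully, establishing the equivalence ``kernel membership $\Leftrightarrow$ zero of order $\geq m$'' in that model case, and then remark that the general statement follows by applying the same argument blockwise to each distinct value, exactly as the grouping in (\ref{eq:doublezeroDfn2}) organizes the $*$-matrix.
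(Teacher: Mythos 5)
Your proposal is correct and is essentially the argument the paper intends: its one-line proof defers to \Cref{rem:doublezeros} for exactly the dictionary you spell out (rows of the $*$-matrix $\leftrightarrow$ vanishing of $f,f',\dots,f^{(m-1)}$ at a value of multiplicity $m$), after which the kernel/determinant argument of \Cref{lem:determinant} carries over verbatim with zeros counted by multiplicity. Your explicit treatment of the multiplicity bookkeeping and of the need to first recover the T-system property in the direction (ii) $\Rightarrow$ (i) only makes precise what the paper leaves implicit.
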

\begin{proof}
Follows immediately from \Cref{rem:doublezeros}.
\end{proof}

\begin{exm}[see e.g.\ {\cite[p.\ 19, Exm.\ 12]{karlinStuddenTSystemsBook}}]\label{exm:generalETsystem}
Let $n\in\nset$ and $g_0,\dots,g_n\in C^n([a,b])$ such that $g_0,\dots,g_n>0$ on $[a,b]$ with $a<b$.
Define
\begin{align*}
f_0(x) &:= g_0(x)\\
f_1(x) &:= g_0(x)\cdot \int_a^x g_1(y_1)~\diff y_1\\
f_2(x) &:= g_0(x)\cdot \int_a^x g_1(y_1)\cdot\int_a^{y_1} g_2(y_2)~\diff y_2~\diff y_1\\
&\;\;\;\vdots \\
f_n(x) &:= g_0(x)\cdot \int_a^x g_1(y_1)\cdot \int_a^{y_1} g_2(y_2)~ {\dots} \int_a^{y_{n-1}} g_n(y_n)~\diff y_n~ \dots ~\diff y_2~\diff y_1.
\end{align*}
Then $\{f_i\}_{i=0}^n$ is an ET-system on $[a,b]$.
\exmsymbol
\end{exm}

\begin{exm}\label{exm:nonETalg}
Let $\cF= \{1,x,x^3\}$ on $[0,b]$, $b>0$. Then $\cF$ is a T-system (Example \ref{exm:algPoly}(b)) but not an ET-system.
To see this let $x_0 = x_1 = x_2 = 0$, then
\[\begin{pmatrix}
f_0 & f_1 & f_2\\
0 & 0 & 0
\end{pmatrix}^* = \begin{pmatrix}
1 & 0 & 0\\
0 & 1 & 0\\
0 & 0 & 0
\end{pmatrix}.\]
This shows that $\cF$ is not an ET-system.\exmsymbol
\end{exm}

In the previous example the position $x=0$ prevents the T-system to be a ET-system. If $x=0$ is removed then it is an ET-system.

\begin{exm}\label{exm:algETsystem}
Let $\alpha_0,\dots,\alpha_n\in\nset_0$ with $\alpha_0 < \alpha_1 < \dots < \alpha_n$.
Then $\cF = \{x^{\alpha_i}\}_{i=0}^n$ on $[a,\infty)$ with $a>0$ is an ET-system.
For $n=2m$ and $a < x_1 < x_2 < \dots < x_m$ we often encounter a specific polynomial structure and hence we write it down explicitly once:
\begin{align}
&\det\begin{pmatrix}
x^{\alpha_0} & x^{\alpha_1} & x^{\alpha_2} & \dots & x^{\alpha_{2m-1}} & x^{\alpha_{2m}}\\
x & (x_1 & x_1) & \dots & (x_m & x_m)
\end{pmatrix}\notag\\
&= \lim_{\varepsilon\to 0} \varepsilon^{-m}\cdot \det\begin{pmatrix}
x^{\alpha_0} & x^{\alpha_1} & x^{\alpha_2} & \dots & x^{\alpha_{2m-1}} & x^{\alpha_{2m}}\\
x & x_1 & x_1+\varepsilon & \dots & x_m & x_m+\varepsilon
\end{pmatrix}\notag\\
&=\lim_{\varepsilon\to 0} \left[\prod_{i=1}^m (x_i-x)(x_i+\varepsilon-x)\right]\cdot \left[\prod_{1\leq i<j\leq m} (x_j-x_i)^2(x_j-x_i-\varepsilon)(x_j+\varepsilon-x_i) \right]\label{eq:schurPolyRepr}\\
&\qquad\times s_\alpha(x,x_1,x_1+\varepsilon,\dots,x_m,x_m+\varepsilon)\notag\\
&= \prod_{i=1}^m (x_i-x)^2 \cdot \prod_{1\leq i<j\leq m} (x_j-x_i)^4\cdot s_\alpha(x,x_1,x_1,\dots,x_m,x_m)\notag
\end{align}
where $s_\alpha$ is the Schur polynomial of $\alpha = (\alpha_0,\dots,\alpha_n)$ \cite{macdonSymFuncHallPoly}.
Hence, $s_\alpha(x,x_1,x_1,\dots,x_m,x_m)$ is not divisible by some $(x_i-x)$.

In fact, this is a special case of \Cref{exm:generalETsystem}.
With \Cref{exm:generalETsystem} we can even allow $-\infty < \alpha_0 < \alpha_1 < \dots < \alpha_n < \infty$ to be reals since $a>0$.
\exmsymbol
\end{exm}
\begin{proof}
Combine the induction
\[f^{(m+1)}(x) = \lim_{h\to 0} \frac{f^{(m)}(x+h) - f^{(m)}(x)}{h}\]
and
\[\det \begin{pmatrix}
x^{\alpha_0} & \dots & x^{\alpha_n}\\
x_0 & \dots & x_n
\end{pmatrix} =
\prod_{0\leq i<j\leq n} (x_j - x_i)\cdot s_\alpha(x_0,\dots,x_n)\]
where $s_\alpha$ is the Schur polynomial of $\alpha = (\alpha_0,\dots,\alpha_n)$.
\end{proof}

\begin{exm}
Let $n\in\nset$. Then the T-system $\cF = \{x^i\}_{i=0}^n$ on $\rset$ is an ET-system.\exmsymbol
\end{exm}

\section{Sparse Positivstellensätze and Nichtnegativstellensätze}
\label{sec:sparsePosNiNeg}

In this section we present the Positivestellensatz for T-systems of Karlin (\Cref{thm:karlinab} and \Cref{cor:posabDecomp}).
We show their application to gain algebraic sparse Positivestellensätze and Nichtnegativstellensätze.
They will be used to solve sparse moment problems.

\subsection{Sparse Positivstellensätze and Nichtnegativstellensätze on $[a,b]$ for general T-Systems}
\label{sec:posabGeneral}

For what follows we want to remind the reader of the index $\varepsilon(x)$ of a point $x$, see \Cref{dfn:index}.

\begin{dfn}
Let $\cZ\subset [a,b]$. We say \emph{$\cZ$ has index $n$} if $\sum_{x\in\cZ} \varepsilon(x) = n$.
The same definition holds for sets $\cX$ as in \Cref{cor:zeros1}.
\end{dfn}

Because of its importance and since it was first proved in full generality by Karlin in \cite{karlin63} we call the following result Karlin's Theorem.

\begin{karthm}[{\cite{karlin63}} or e.g.\ {\cite[p.\ 66, Thm.\ 10.1]{karlinStuddenTSystemsBook}}]\label{thm:karlinab}
Let $n\in\nset$, $\cF = \{f_i\}_{i=0}^n$ be a continuous T-system of order $n$ on $[a,b]$ with $a<b$, and let $f\in C([a,b])$ with $f>0$ on $[a,b]$.
The following hold:
\begin{enumerate}[(i)]
\item There exists a unique polynomial $f_*\in\lin\cF$ such that
\begin{enumerate}[(a)]
\item $f(x) \geq f_*(x) \geq 0$ for all $x\in [a,b]$,

\item $f_*$ vanishes on a set with index $n$,

\item the function $f-f_*$ vanishes at least once between each pair of adjacent zeros of $f_*$,

\item the function $f-f_*$ vanishes at least once between the largest zero of $f_*$ and the end point $b$,

\item $f_*(b)>0$.
\end{enumerate}

\item There exists a unique polynomial $f^*\in\lin\cF$ which satisfies the conditions (a)--(d) of (i), and
\begin{enumerate}[(a')]\setcounter{enumii}{4}
\item $f^*(b) = 0$.
\end{enumerate}
\end{enumerate}
\end{karthm}
\begin{proof}
See e.g.\ \cite[p.\ 68--71]{karlinStuddenTSystemsBook}.
\end{proof}

Note, in the previous result we do not need to have $f\in\lin\cF$. The function $f$ only needs to be continuous and strictly positive on $[a,b]$.

An earlier version of \Cref{thm:karlinab} is a lemma by Markov \cite{markov84}, see also \cite[p.\ 80]{shohat43}.

For the same reason as for \Cref{thm:karlinab} we call the following immediate consequence Karlin's Corollary. It is the T-system Positivstellensatz by Karlin and will be used to generate (algebraic) Positivestellensätze.

\begin{karcor}[{\cite{karlin63}} or e.g.\ {\cite[p.\ 71, Cor.\ 10.1(a)]{karlinStuddenTSystemsBook}}]\label{cor:posabDecomp}
Let $n\in\nset$, $\cF$ be a continuous T-system of order $n$ on $[a,b]$ with $a<b$, and let $f\in\lin\cF$ with $f>0$ on $[a,b]$.
Then there exists a unique representation
\[ f = f_* + f^*\]
with $f_*,f^*\in\lin\cF$ such that
\begin{enumerate}[(i)]
\item $f_*,f^*\geq 0$ on $[a,b]$,

\item the zeros of $f_*$ and $f^*$ each are sets of index $n$,

\item the zeros of $f_*$ and $f^*$ strictly interlace, and

\item $f^*(b) = 0$.
\end{enumerate}
\end{karcor}
\begin{proof}
Let $f_*$ be the unique $f_*$ from \Cref{thm:karlinab}(i). Then $f - f_*\in\lin\cF$ is a polynomial and fulfills (a)--(d), and (e') of $f^*$ in \Cref{thm:karlinab}.
But since $f^*$ is unique we have $f - f_* = f^*$.
\end{proof}

\begin{cor}[{\cite{karlin63}} or e.g.\ {\cite[p.\ 71, Cor.\ 10.1(b)]{karlinStuddenTSystemsBook}}]
Let $n\in\nset$, $\{f_i\}_{i=0}^n$ and $\{f_i\}_{i=0}^{n+1}$ be continuous T-systems of order $n$ and $n+1$ on $[a,b]$ with $a<b$. Then $f_{n+1}-(f_{n+1})_*$ and $f_{n+1} - (f_{n+1})^*$ both vanish on sets of index $n+1$.
\end{cor}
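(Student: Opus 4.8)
The plan is to prove the statement by reducing it to \Cref{cor:posabDecomp}, but read inside the \emph{larger} system. The decisive first step is to apply Karlin's Theorem (\Cref{thm:karlinab}), equivalently \Cref{cor:posabDecomp}, not to the small system but to $f=f_{n+1}$ relative to the order-$(n+1)$ T-system $\{f_i\}_{i=0}^{n+1}$: since $f_{n+1}\in\lin\{f_i\}_{i=0}^{n+1}$ and (as I will assume, or arrange) $f_{n+1}>0$ on $[a,b]$, \Cref{cor:posabDecomp} yields the unique decomposition $f_{n+1}=(f_{n+1})_*+(f_{n+1})^*$ with both summands non-negative on $[a,b]$, with interlacing zeros, and with $(f_{n+1})^*(b)=0$. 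The whole point of this reduction is that the two differences in the statement are then literally the complementary summands, namely $f_{n+1}-(f_{n+1})_*=(f_{n+1})^*$ and $f_{n+1}-(f_{n+1})^*=(f_{n+1})_*$; it is essential to work in the extended system, for taken in the order-$n$ system the difference would be close to $f_{n+1}$ and carry far fewer zeros.

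With this identification the conclusion is exactly part (ii) of \Cref{cor:posabDecomp}, which asserts that the zeros of $(f_{n+1})_*$ and of $(f_{n+1})^*$ each form a set of index $n+1$. To make the index count transparent I would split it into the two inequalities on which it rests. For the upper bound, each summand is a non-zero element of the order-$(n+1)$ T-system, so by \Cref{thm:zeros1} together with \Cref{cor:zeros1} its zero set has index at most $n+1$. For the lower bound one uses that each summand is $\geq 0$, so that its interior zeros are non-nodal, of index $2$ by \Cref{dfn:index}, together with properties (a)--(d) of \Cref{thm:karlinab}, which force the interlacing pattern and pin the remaining endpoint contribution (via $(f_{n+1})_*(b)>0$ for one summand and $(f_{n+1})^*(b)=0$ for the other). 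The two bounds then meet at $n+1$.

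The step I expect to be the real obstacle is precisely this endpoint-and-parity bookkeeping in the lower bound: the interlacing interior zeros by themselves account for an index of only about $n$, and one must certify that the extra unit of index is genuinely attained and located at the correct endpoint, its location being dictated by the asymmetry between $(f_{n+1})_*(b)>0$ and $(f_{n+1})^*(b)=0$ and by the parity of $n$. In the present write-up this difficulty is already absorbed into \Cref{thm:karlinab}(b) and \Cref{cor:posabDecomp}, so the remaining work is essentially the bookkeeping of identifying the two differences with the two summands; the one genuine hypothesis to watch is the positivity $f_{n+1}>0$ needed to launch \Cref{thm:karlinab}, without which $(f_{n+1})_*$ and $(f_{n+1})^*$ are not even defined.
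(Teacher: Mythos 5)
There is a genuine gap, and it is located exactly where you placed your ``decisive first step'': you have misidentified the objects $(f_{n+1})_*$ and $(f_{n+1})^*$. They are \emph{not} the two summands of the decomposition of $f_{n+1}$ inside the order-$(n+1)$ system $\{f_i\}_{i=0}^{n+1}$. They are the polynomials produced by \Cref{thm:karlinab} for the order-$n$ system $\cF=\{f_i\}_{i=0}^n$ with the \emph{continuous function} $f=f_{n+1}$ as the function being approximated --- this is precisely why the paper inserts, immediately after \Cref{thm:karlinab}, the remark that $f$ need not lie in $\lin\cF$ but only needs to be continuous and positive, and why the corollary carries the otherwise pointless hypothesis that $\{f_i\}_{i=0}^n$ is itself a T-system of order $n$. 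Under your reading that hypothesis is never used, $f_{n+1}-(f_{n+1})_*$ and $f_{n+1}-(f_{n+1})^*$ are literally $(f_{n+1})^*$ and $(f_{n+1})_*$, and the corollary collapses into a verbatim repetition of \Cref{cor:posabDecomp}(ii); a statement that restates the immediately preceding corollary while adding an unused hypothesis should have been a warning sign. Your parenthetical ``it is essential to work in the extended system, for taken in the order-$n$ system the difference would \dots carry far fewer zeros'' inverts the actual content: the point of the corollary (Karlin--Studden, Cor.\ 10.1(b)) is that the approximation defect $f_{n+1}-(f_{n+1})_*$, with $(f_{n+1})_*\in\lin\{f_i\}_{i=0}^{n}$, is a nonzero polynomial of the \emph{larger} T-system $\{f_i\}_{i=0}^{n+1}$ which attains the maximal possible zero index $n+1$.

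The intended argument (the paper states the corollary without proof) runs as follows: apply \Cref{thm:karlinab} to $\cF=\{f_i\}_{i=0}^n$ and $f=f_{n+1}$; then $g:=f_{n+1}-(f_{n+1})_*$ is a non-negative, nonzero element of $\lin\{f_i\}_{i=0}^{n+1}$, so by \Cref{thm:zeros1} its zero set has index at most $n+1$, while conditions (b)--(e) of \Cref{thm:karlinab} force enough non-nodal interior zeros of $g$ between consecutive zeros of $(f_{n+1})_*$ and near the endpoints to push the index up to exactly $n+1$; the count is a genuine (parity-sensitive) piece of bookkeeping, not something already absorbed into \Cref{cor:posabDecomp}. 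Your secondary worry is, however, legitimate under either reading: invoking \Cref{thm:karlinab} requires $f_{n+1}>0$ on $[a,b]$ (or the normalization convention of Karlin--Studden), and the corollary as printed does not state this hypothesis; that is a gap in the paper's formulation rather than something your argument could repair.
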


\begin{thm}[{\cite{karlin63}} or e.g.\ {\cite[p.\ 72, Thm.\ 10.2]{karlinStuddenTSystemsBook}}]
Let $n\in\nset$, $\cF = \{f_i\}_{i=0}^n$ be a continuous T-system of order $n$ on $[a,b]$ with $a<b$, and let $g_1,g_2$ be two continuous functions on $[a,b]$ such that there exists a $g'\in\lin\cF$ with
\[ g_1(x) < g'(x) < g_2(x)\]
for all $x\in [a,b]$. The following hold:
\begin{enumerate}[(i)]
\item There exists a unique polynomial $f_*\in\lin\cF$ such that
\begin{enumerate}[(a)]
\item $g_1(x) \leq f_*(x) \leq g_2(x)$ for all $x\in [a,b]$, and

\item there exist $n+1$ points $x_1<\dots<x_{n+1}$ in $[a,b]$ such that
\[ f_*(x_{n+1-i}) = \begin{cases}
g_1(x_{n+1-i}) & \text{for}\ i=1,3,5,\dots,\\
g_2(x_{n+1-i}) & \text{for}\ i=0,2,4,\dots .
\end{cases}\]
\end{enumerate}

\item There exists a unique polynomial $f^*\in\lin\cF$ such that
\begin{enumerate}[(a')]
\item $g_1(x) \leq f^*(x) \leq g_2(x)$ for all $x\in [a,b]$, and

\item there exist $n+1$ points $y_1 < \dots < y_{n+1}$ in $[a,b]$ such that
\[ f^*(y_{n+1-i}) = \begin{cases}
g_2(y_{n+1-i}) & \text{for}\ i=1,3,5,\dots,\\
g_1(y_{n+1-i}) & \text{for}\ i=0,2,4,\dots .
\end{cases}\]
\end{enumerate}
\end{enumerate}
\end{thm}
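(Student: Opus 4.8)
The plan is to reduce everything to part (i) and then to a single extremal problem. First I would dispose of part (ii) by the reflection $p \mapsto -p$, $(g_1,g_2) \mapsto (-g_2,-g_1)$, which preserves the hypothesis $g_1 < g' < g_2$ (with $g'$ replaced by $-g' \in \lin\cF$) and interchanges the two alternation patterns in (i)(b) and (ii)(b'); hence (ii) follows from (i) applied to the reflected data. Writing $g_0 := \tfrac12(g_1+g_2)$ and $\rho := \tfrac12(g_2-g_1)$, the band condition (a) becomes $|p - g_0| \le \rho$ on $[a,b]$, where $\rho$ is continuous and strictly positive, and (b) says that $p - g_0$ attains the values $\mp\rho$ alternately at $n+1$ ordered points, with value $+\rho$ at the largest. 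The feasible set $P := \{p \in \lin\cF : |p-g_0| \le \rho \text{ on } [a,b]\}$ is nonempty (it contains $g'$), convex, and compact: by the interpolation property of T-systems established above, a polynomial in $\lin\cF$ is determined by, and its coefficients depend continuously on, its values at any $n+1$ fixed distinct nodes, so the sup-norm bound coming from $|p| \le |g_0| + \rho$ bounds the coefficients.

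Existence I would obtain by a Remez-type exchange together with the compactness of $P$, in the spirit of the limiting-determinant constructions of \Cref{thm:zeros2} and \Cref{thm:karlinab}. For an ordered node vector $x_1 < \dots < x_{n+1}$ the interpolation property produces a unique $p \in \lin\cF$ taking the prescribed alternating band values $g_0(x_j) \pm \rho(x_j)$ there; one then increases the number of strict contacts by the usual exchange step, moving to a configuration for which the interpolant stays inside the band. Passing to the limit of such configurations inside the compact set $P$, and using \Cref{cor:zeros1} to bound how often a difference of polynomials in $\lin\cF$ can oscillate, would yield a limit $f_* \in P$ realizing the full alternation of $n+1$ contacts in the required phase.

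Uniqueness is the cleaner half and rests on the zero bound of \Cref{dfn:tSystem} and \Cref{lem:determinant}. If $f_*$ and $\tilde f_*$ both satisfy (a),(b), set $w := f_* - \tilde f_* \in \lin\cF$. At a contact where $f_*$ meets the upper edge $g_2$ one has $\tilde f_* \le g_2 = f_*$, so $w \ge 0$ there, and at a contact with the lower edge $g_1$ one gets $w \le 0$; thus $w$ takes weakly alternating signs at the $n+1$ contacts of $f_*$ and, with reversed sign, at the $n+1$ contacts of $\tilde f_*$. Merging the two contact sets forces, between consecutive nodes carrying opposite prescribed signs, a zero of $w$; counting these one finds strictly more than $n$ zeros of $w$ unless $w \equiv 0$, which by the T-system property forces $w = 0$.

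The main obstacle is the existence step: unlike in \Cref{thm:karlinab}, the contact sets are cut out by $p - g_0 = \pm\rho$ with $g_0,\rho \notin \lin\cF$, so the zero-counting machinery of \Cref{thm:zeros1} and \Cref{cor:zeros1} cannot be applied to the contacts directly and one must argue through differences lying in $\lin\cF$. Guaranteeing that the exchange and limiting procedure terminates at a polynomial with exactly the prescribed alternation of $n+1$ contacts (rather than fewer contacts, or the wrong phase at $b$) is the delicate point. In the uniqueness argument the corresponding subtlety is that the alternation inequalities are only weak, so care is needed at nodes where $w$ vanishes, which is precisely where the two contact sets must be played against each other.
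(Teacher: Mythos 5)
The paper itself does not prove this theorem: it defers entirely to Karlin--Studden \cite[p.~73]{karlinStuddenTSystemsBook}, where the result is obtained in the same circle of ideas as \Cref{thm:karlinab}. So your proposal must be judged on its own, and as written it is a plan rather than a proof. The reflection $p\mapsto -p$, $(g_1,g_2)\mapsto(-g_2,-g_1)$ reducing (ii) to (i) is correct, and the compactness of the feasible band $P$ is fine. But the existence half is exactly the content of the theorem and you leave it open: ``Remez-type exchange together with compactness'' names a strategy without showing that the exchange step strictly increases the number of contacts, that it terminates with the full set of $n+1$ alternating contacts, or that the limit lands in the phase of (i)(b) (terminal contact with $g_2$) rather than that of (ii)(b'). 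You identify this yourself as ``the delicate point''; conceding the delicate point is not the same as resolving it. A more economical route --- and the one the cited source takes --- is to derive the statement from \Cref{thm:karlinab}, whose existence machinery the paper already provides, rather than rebuilding a Remez iteration from scratch for contact sets that are not cut out by elements of $\lin\cF$.

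The uniqueness half also has a concrete gap. Weak alternation of $w=f_*-\tilde f_*$ at the $n+1$ contacts of $f_*$ forces only $n$ zeros of $w$ (one per gap between consecutive contacts), and a nonzero element of the $(n+1)$-dimensional space $\lin\cF$ is permitted to have exactly $n$ zeros by \Cref{dfn:tSystem}, so no contradiction results from that count alone: for $n=1$, $\cF=\{1,x\}$, $g_1=-1$, $g_2=1$, the difference $w$ need only vanish once between the two contacts of $f_*$. Merging the two contact sets is the right repair, but your claim that the merged configuration yields ``strictly more than $n$ zeros'' rests on an unproved combinatorial lemma: that two weakly alternating sign patterns of length $n+1$ with opposite terminal signs, interleaved in an arbitrary order and with $w$ possibly vanishing at some of the nodes, force at least $n+1$ zeros of $w$ once interior zeros without sign change are counted doubly via \Cref{thm:zeros1}. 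That lemma is true, but it is precisely where the weak inequalities bite, and as written the uniqueness argument asserts its conclusion instead of proving it.
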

\begin{proof}
See \cite[p.\ 73]{karlinStuddenTSystemsBook}.
\end{proof}

In \Cref{thm:karlinab} and \Cref{cor:posabDecomp} we dealt with $f\in\lin\cF$ with $f>0$, i.e., they are the Positivstellensatz.
The following result allows for $f\geq 0$ and is therefore together with \Cref{cor:nonnegabDecomp} the T-system Nichtnegativstellensatz of Karlin.
We get from \Cref{cor:nonnegabDecomp} sparse algebraic Nichtnegativstellensätze (\Cref{thm:algNichtNegab}).

\begin{thm}[{\cite{karlin63}} or e.g.\ {\cite[p.\ 74, Thm.\ 10.3]{karlinStuddenTSystemsBook}}]\label{thm:karlinNichtNeg}
Let $n\in\nset$, $\cF = \{f_i\}_{i=0}^n$ be a continuous ET-system of order $n$ on $[a,b]$ with $a<b$, and let $f\in C^n([a,b])$ be such that $f\geq 0$ on $[a,b]$ and $f$ has $r < n$ zeros (counting multiplicities).
The following hold:
\begin{enumerate}[(i)]
\item There exists a unique polynomial $f_*\in\lin\cF$ such that
\begin{enumerate}[(a)]
\item $f(x)\geq f_*(x)\geq 0$ for all $x\in [a,b]$,

\item $f_*$ has $n$ zeros counting multiplicities,

\item if $x_{1} < \dots < x_{{n-r}}$ in $(a,b)$ are the zeros of $f_*$ which remain after removing the $r$ zeros of $f$ then $f - f_*$ vanishes at least twice more (counting multiplicities) in each open interval $(x_i,x_{i+1})$, $i=1,\dots,n-r-1$, and at least once more in each of the intervals $[a,x_1)$ and $(x_{n-r},b]$,

\item the zeros $x_1,\dots,x_{n-r}$ of (c) are a set of index $n-r$, and

\item $x_{n-r} < b$.
\end{enumerate}

\item There exists a unique polynomial $f^*\in\lin\cF$ satisfying the conditions (a) to (d) and (e') $x_{n-r} = b$.
\end{enumerate}
\end{thm}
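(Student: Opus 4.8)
The plan is to reduce the non-strict statement to the already-proven strict version, Karlin's Theorem \ref{thm:karlinab}, by ``dividing out'' the $r$ zeros of $f$. First I would record the forcing of shared zeros: if $f_*\in\lin\cF$ satisfies $f\geq f_*\geq 0$ on $[a,b]$, then at every zero $z$ of $f$ we have $0=f(z)\geq f_*(z)\geq 0$, so $f_*(z)=0$; since $f,f_*\in C^n$ and both $f_*\geq 0$ and $f-f_*\geq 0$ vanish at $z$, a Taylor comparison forces $f_*$ and $f-f_*$ to vanish at $z$ to at least the multiplicity of $f$ there. Because $f\geq 0$ is $C^n$, each interior zero has even multiplicity, and the total is $r<n$. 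Thus any admissible $f_*$ must carry the $r$ zeros of $f$, leaving exactly $n-r$ free zeros; this already explains the split in (b)--(d) between the prescribed zeros and the points $x_1<\dots<x_{n-r}$.

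Next comes the key reduction. I would divide out the zeros of $f$ one at a time using the determinantal (Wronskian) construction available for ET-systems (see \Cref{rem:doublezeros} and the corollary characterizing ET-systems), which is known to send an ET-system of order $m$ on $[a,b]$ to an ET-system of order $m-1$. Iterating over the $r$ prescribed zeros (counted with multiplicity) produces an ET-system $\{h_i\}_{i=0}^{n-r}$ of order $n-r$ on $[a,b]$, together with a fixed continuous weight $w\geq 0$ vanishing exactly at the zeros of $f$ to the correct multiplicities; here $w$ is chosen sign-definite on $[a,b]$ (using $(x-a)^m\geq 0$ and $(b-x)^m\geq 0$ at the endpoints and even interior multiplicities) so that $\tilde f:=f/w$ extends to a \emph{strictly positive} continuous function on $[a,b]$, and so that $g\mapsto g/w$ identifies the subspace of $\lin\cF$ vanishing at the prescribed zeros with $\lin\{h_i\}_{i=0}^{n-r}$.

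I would then apply \Cref{thm:karlinab}(i) to the strictly positive continuous function $\tilde f$ in the order-$(n-r)$ ET-system $\{h_i\}$, obtaining a unique $\tilde f_*\in\lin\{h_i\}$ with $\tilde f\geq\tilde f_*\geq 0$, vanishing on a set of index $n-r$ at interior points $x_1<\dots<x_{n-r}$, with $\tilde f-\tilde f_*$ vanishing between adjacent zeros and after the last one, and $\tilde f_*(b)>0$. Setting $f_*:=w\cdot\tilde f_*$ gives an element of $\lin\cF$; multiplying the inequalities and the interlacing statements by $w\geq 0$ transfers (a) (namely $f\geq f_*\geq 0$), the zero count (b) ($f_*$ has the $r$ zeros from $w$ plus the $n-r$ zeros of $\tilde f_*$, total $n$ counting multiplicity), the interval condition (c) (the extra vanishings of $f-f_*=w\cdot(\tilde f-\tilde f_*)$ between the $x_i$ and on $[a,x_1)$, $(x_{n-r},b]$), the index statement (d), and $x_{n-r}<b$ in (e). Uniqueness of $f_*$ follows from the uniqueness in \Cref{thm:karlinab}(i) together with the bijection $g\leftrightarrow g/w$; part (ii) is identical using \Cref{thm:karlinab}(ii) with $\tilde f_*(b)=0$, i.e.\ $x_{n-r}=b$.

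The main obstacle will be the reduction lemma itself: proving that dividing out a (possibly high-multiplicity) zero via the Wronskian construction really yields an ET-system of order one less on all of $[a,b]$, and that the weight $w$ can be chosen continuous, sign-definite, and exactly matching the multiplicities so that $f/w$ is genuinely strictly positive and the correspondence $g\leftrightarrow g/w$ is a linear isomorphism onto the reduced span. Care is needed at the endpoints (index $1$ versus $2$) and at the shared zeros, where one must check that the multiplicities add up correctly and that no spurious zeros of $f-f_*$ are created or lost when multiplying back by $w$; this bookkeeping of zeros counted with multiplicity is exactly where the ET-hypothesis (rather than a mere T-system) is essential, as \Cref{exm:nonETalg} shows it can otherwise fail.
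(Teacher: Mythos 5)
The first thing to say is that the paper does not actually prove this theorem: its ``proof'' is the single line ``See \cite[p.\ 74--75]{karlinStuddenTSystemsBook}''. So there is no in-paper argument to compare against, and your proposal has to stand on its own. Your overall strategy --- force any admissible $f_*$ to absorb the zeros of $f$ with their multiplicities, divide out those zeros to reduce to the strictly positive case, apply \Cref{thm:karlinab} to the reduced system, and multiply back --- is a sound and essentially standard ``reduced system'' route, and your first paragraph (comparing leading Taylor coefficients of the two non-negative summands $f_*$ and $f-f_*$ at a zero of $f$) is correct and is exactly what makes the uniqueness transfer work.

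The genuine gap is the one you flag yourself: the reduction lemma is the entire content of the proof, and ``divide out the zeros one at a time using the Wronskian construction'' is not an argument --- no such division lemma appears in the paper, and iterating an unproved order-reduction step is where the proof currently stops. It can be closed as follows, more cleanly than one zero at a time. Let $z_1,z_2,\dots$ be the zeros of $f$ with multiplicities $m_1,m_2,\dots$ summing to $r$, and let $V\subset\lin\cF$ be the subspace of polynomials vanishing at $z_i$ to order at least $m_i$. The ET-hypothesis says precisely that the starred matrices of \Cref{rem:doublezeros} are nonsingular for repeated nodes, so the jet-evaluation map $\lin\cF\to\rset^r$ is surjective and $\dim V=n+1-r$. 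With your sign-definite weight $w\geq 0$, the map $u\mapsto u/w$ is a linear isomorphism of $V$ onto an $(n+1-r)$-dimensional space of continuous functions on $[a,b]$ in which a nonzero element has at most $n-r$ zeros counting multiplicities (otherwise $u=w\cdot(u/w)$ would have more than $n$); any basis of $V/w$ is therefore a continuous T-system of order $n-r$, which is all that \Cref{thm:karlinab} requires --- you do not need the reduced family to be an ET-system again. Two further points need explicit care in the transfer back: \Cref{thm:karlinab}(c) yields only one vanishing of $\tilde f-\tilde f_*$ per gap, and the ``at least twice counting multiplicities'' in (c) here is recovered only because $f-f_*\in C^n$ is non-negative, so its interior zeros have even order; and conditions (c)--(d) must be read with the residual zeros listed by multiplicity (distinct interior points have index $2$, so $n-r$ genuinely distinct interior points would have index $2(n-r)$), which forces the bookkeeping at collisions of residual zeros with zeros of $f$ to be done at the level of orders of vanishing, as you anticipate. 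With the lemma supplied in this form the proposal goes through.
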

\begin{proof}
See \cite[p.\ 74--75]{karlinStuddenTSystemsBook}.
\end{proof}

\begin{cor}[{\cite{karlin63}} or e.g.\ {\cite[p.\ 76, Cor.\ 10.3]{karlinStuddenTSystemsBook}}]\label{cor:nonnegabDecomp}
Let $n\in\nset$, $\cF = \{f_i\}_{i=0}^n$ be an ET-system of order $n$ on $[a,b]$ with $a<b$, and let $f\in\lin\cF$ be such that $f\geq 0$ on $[a,b]$ and $f$ has $r < n$ zeros (counting multiplicities).
Then there exists a unique representation
\[f = f_* + f^*\]
with $f_*,f^*\in\lin\cF$ such that
\begin{enumerate}[(i)]
\item $f_*, f^*\geq 0$ on $[a,b]$,

\item $f_*$ and $f^*$ have $n$ zeros (counting multiplicity) which strictly interlace if the zeros of $f$ are removed,

\item $f^*(b) =0$.
\end{enumerate}
\end{cor}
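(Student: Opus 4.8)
The plan is to mirror the derivation of \Cref{cor:posabDecomp} from \Cref{thm:karlinab}, now using \Cref{thm:karlinNichtNeg} in place of \Cref{thm:karlinab}. First I would let $f_*\in\lin\cF$ be the unique polynomial supplied by \Cref{thm:karlinNichtNeg}(i) and define the complementary polynomial $f^* := f - f_*$. Since both $f$ and $f_*$ lie in $\lin\cF$, so does $f^*$, and the decomposition $f = f_* + f^*$ is built in by construction. The remaining work is to show that this $f^*$ is exactly the unique polynomial characterized in \Cref{thm:karlinNichtNeg}(ii), after which properties (i)--(iii) of the corollary are read off directly.

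The non-negativity (i) is immediate: the inequality $0\leq f_*\leq f$ from \Cref{thm:karlinNichtNeg}(i)(a) rearranges to $0\leq f - f_*\leq f$, so $f^* = f - f_*\geq 0$, and in fact $f^*$ inherits the full condition (a) of \Cref{thm:karlinNichtNeg}, namely $f\geq f^*\geq 0$. The heart of the argument is verifying the zero structure required by conditions (b)--(d) and (e') of \Cref{thm:karlinNichtNeg}(ii) for $f^*$. Here I would note that wherever $f$ vanishes we also have $f_* = 0$ (being squeezed between $0$ and $f$), so the $r$ zeros of $f$ are common zeros of $f_*$ and $f^*$; and that condition (c) of \Cref{thm:karlinNichtNeg}(i), which forces $f - f_* = f^*$ to vanish between consecutive free zeros of $f_*$, in $[a,x_1)$, and in $(x_{n-r},b]$, produces precisely the $n$ zeros of $f^*$ and places them so that they strictly interlace those of $f_*$ once the $r$ zeros of $f$ are deleted. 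The endpoint condition (e') $x_{n-r}=b$ for $f^*$, i.e.\ $f^*(b)=0$, is the statement complementary to (e) $x_{n-r}<b$ for $f_*$; it holds because the ``at least once more in $(x_{n-r},b]$'' clause for $f - f_*$ must be realized at $b$ itself once the interlacing count is saturated.

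Having checked that $f^* = f - f_*$ satisfies the full list of characterizing properties in \Cref{thm:karlinNichtNeg}(ii), the uniqueness asserted there forces $f - f_*$ to coincide with that canonical $f^*$, establishing existence of the decomposition together with the corollary-properties (ii) (the two $n$-zero sets interlacing after removal of the zeros of $f$) and (iii) ($f^*(b)=0$). Uniqueness of the decomposition in the corollary then follows from the uniqueness of $f_*$ and $f^*$ in \Cref{thm:karlinNichtNeg}: any admissible splitting $f = g_* + g^*$ has summands satisfying the respective characterizations, hence $g_* = f_*$ and $g^* = f^*$. The main obstacle I anticipate is the careful bookkeeping in the previous paragraph --- matching the multiplicity counts of the zeros of $f - f_*$ against the ``vanishes at least twice more / at least once more'' clauses of \Cref{thm:karlinNichtNeg}(i)(c)--(d) to confirm that $f^*$ has exactly $n$ zeros and that its last free zero is pinned to the endpoint $b$; everything else is a routine rearrangement of the theorem's conclusions.
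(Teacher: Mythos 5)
Your proposal is correct and is exactly the intended derivation: the paper gives no separate proof of \Cref{cor:nonnegabDecomp} (citing Karlin--Studden), but your argument mirrors verbatim the paper's own proof of the analogous \Cref{cor:posabDecomp}, namely taking $f_*$ from the existence theorem, setting $f^*:=f-f_*$, checking the characterizing properties (a)--(d) and (e$'$), and invoking uniqueness. The zero-count bookkeeping you flag does close up as you expect, since non-negativity forces the interior zeros of $f-f_*$ to be double, so the total count saturates at $n$ only if the last zero sits at $b$.
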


\subsection{Sparse Positivstellensätze and Nichtnegativstellensätze on $[a,b]$ for Algebraic Polynomials}

\begin{thm}[Sparse Algebraic Positivstellensatz on {$[a,b]$ with $0<a<b$}]\label{thm:algPosSatzab}
Let $n\in\nset$, $\alpha_0,\dots,\alpha_n\in\rset$ be real numbers with $\alpha_0 < \alpha_1 < \dots < \alpha_n$, and let $\cF=\{x^{\alpha_i}\}_{i=0}^n$.
Then for any $f=\sum_{i=0}^n a_i f_i\in\lin\cF$ with $f>0$ on $[a,b]$ and $a_n>0$ there exists a unique decomposition
\[f = f_* + f^*\]
with $f_*,f^*\in\lin\cF$ such that
\begin{enumerate}[(i)]
\item for $n = 2m$ there exist points $x_1,\dots,x_m,y_1,\dots,y_{m-1}\in [a,b]$ with
\[a < x_1 < y_1 < \dots < x_m < b\]
and constants $c_*,c^*>0$ with
\[f_*(x) = c_*\cdot\det \begin{pmatrix}
x^{\alpha_0} & x^{\alpha_1} & x^{\alpha_2} & \dots & x^{\alpha_{2m-1}} & x^{\alpha_{2m}}\\
x & (x_1 & x_1) & \dots & (x_m & x_m)
\end{pmatrix}\geq 0\]
and
\[f^*(x) = -c^*\cdot\det \begin{pmatrix}
x^{\alpha_0} & x^{\alpha_1} & x^{\alpha_2} & x^{\alpha_3} & \dots & x^{\alpha_{2m-2}} & x^{\alpha_{2m-1}} & x^{\alpha_{2m}}\\
x & a & (y_1 & y_1) & \dots & (y_{m-1} & y_{m-1}) & b
\end{pmatrix}\geq 0\]
for all $x\in [a,b]$, or

\item for $n = 2m+1$ there exist points $x_1,\dots,x_m,y_1,\dots,y_m\in [a,b]$ with
\[a < y_1 < x_1 < \dots < y_m < x_m < b\]
and $c_*,c^*>0$ with
\[f_*(x) = -c_*\cdot\det\begin{pmatrix}
x^{\alpha_0} & x^{\alpha_1} & x^{\alpha_2} & x^{\alpha_3} & \dots & x^{\alpha_{2m}} & x^{\alpha_{2m+1}}\\
x & a & (x_1 & x_1) & \dots & (x_m & x_m)
\end{pmatrix}\geq 0\]
and
\[f^*(x) = c^*\cdot\det\begin{pmatrix}
x^{\alpha_0} & x^{\alpha_1} & x^{\alpha_2} & \dots & x^{\alpha_{2m-1}} & x^{\alpha_{2m}} & x^{\alpha_{2m+1}}\\
x & (y_1 & y_1) & \dots & (y_m & y_m) & b
\end{pmatrix}\geq 0\]
for all $x\in [a,b]$.
\end{enumerate}
\end{thm}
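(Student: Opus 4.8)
The plan is to derive this algebraic Positivstellensatz on $[a,b]$ directly from Karlin's Corollary (\Cref{cor:posabDecomp}), specializing to the T-system $\cF = \{x^{\alpha_i}\}_{i=0}^n$ and then using the determinantal structure already worked out in \Cref{exm:algETsystem} to write down $f_*$ and $f^*$ explicitly. The key observation that makes everything go is that, since $0 < a < b$, \Cref{exm:algETsystem} tells us $\cF$ is not merely a T-system but an ET-system on $[a,b]$, so zeros count with multiplicity and the decomposition from \Cref{cor:posabDecomp} inherits a rigid combinatorial shape.

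First I would invoke \Cref{cor:posabDecomp} to obtain the unique decomposition $f = f_* + f^*$ with $f_*,f^* \geq 0$, both vanishing on sets of index $n$, with strictly interlacing zeros and $f^*(b)=0$. The uniqueness in the statement is then inherited directly from the uniqueness in \Cref{cor:posabDecomp}. The heart of the argument is a \emph{counting of zeros by index}. For $n=2m$: a non-negative polynomial vanishing on a set of index $n=2m$ with $f_*(b)>0$ (the ``$f_*$ side'' of \Cref{thm:karlinab}(i), via condition (e)) cannot vanish at either endpoint, since an endpoint zero has index $1$ and forces an odd leftover; hence all its zeros lie in $(a,b)$, each with index $2$, giving exactly $m$ interior double zeros $x_1 < \dots < x_m$. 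Dually, $f^*$ with $f^*(b)=0$ must have $b$ as a zero of index $1$; by the interlacing and the parity constraint $\sum \varepsilon = 2m$, it is then forced to also vanish at $a$ with index $1$, leaving $m-1$ interior double zeros $y_1,\dots,y_{m-1}$. This is exactly the phenomenon flagged in \Cref{rem:kreinError} — for $n=2m$ an endpoint zero drags in the other endpoint. For $n=2m+1$ the odd index forces exactly one endpoint zero on each side, and the parity bookkeeping is the mirror image.

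Once the zero sets and their multiplicities are pinned down, I would apply \Cref{thm:detRepr} together with the double-zero conventions of \Cref{rem:doublezeros} (equations \eqref{eq:doublezeroDfn} and \eqref{eq:doublezeroDfn2}): any polynomial in $\lin\cF$ whose full list of zeros (with multiplicity) is prescribed is, up to a scalar, the confluent determinant obtained by inserting those zeros — and their derivatives, for double zeros — into the Krein matrix of \Cref{dfn:kreinMatrix}. Plugging in the zero data found above yields precisely the determinantal formulas displayed in (i) and (ii). The factorization computed in \eqref{eq:schurPolyRepr} of \Cref{exm:algETsystem} shows each such determinant equals a product of squared factors $\prod (x_i-x)^2$ times a Schur polynomial that does not vanish on $(a,b)$, which simultaneously confirms non-negativity on $[a,b]$ and fixes the \emph{sign} of the normalizing constant. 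The only delicate point is sign tracking: the signs $-c^*$ in (i) and $-c_*$ in (ii) arise because swapping in an endpoint (an odd-index zero with only one row, versus the two rows of a double zero) permutes the determinant's rows and flips the orientation, so one must verify $c_*,c^*>0$ by evaluating the leading behavior as $x\to\infty$ or at a convenient test point, using $a_n>0$ to normalize the $x^{\alpha_n}$-coefficient.

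The main obstacle will be this endpoint-parity argument — establishing rigorously that for $n=2m$ the polynomial $f^*$ must vanish at \emph{both} $a$ and $b$, not at $b$ alone. Naively one might expect $f^*$ to have $b$ plus $m$ interior double zeros (index $1 + 2m = 2m+1 > n$), which is impossible; the resolution is that one interior double zero degenerates to the two simple endpoint zeros at $a$ and $b$ so that the index sums to exactly $n=2m$, and this is forced by the strict interlacing of $f_*$ and $f^*$ in \Cref{cor:posabDecomp}(iii) together with $f_*$ having $m$ interior double zeros. Making this interlacing-plus-parity count airtight — and checking the ET-hypothesis of \Cref{cor:posabDecomp} is met, which it is by \Cref{exm:algETsystem} precisely because $a>0$ — is where the real care is needed; the determinantal formulas themselves are then a transcription.
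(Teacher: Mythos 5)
Your proposal is correct and follows essentially the same route as the paper's proof: establish that $\cF$ is an ET-system via \Cref{exm:algETsystem}, apply \Cref{cor:posabDecomp}, pin down the zero configurations by the index/parity count (including the point that for $n=2m$ the condition $f^*(b)=0$ forces $a\in\cZ(f^*)$ as well), and then read off the determinantal formulas from \Cref{rem:doublezeros}. Your write-up actually spells out the endpoint-parity bookkeeping more explicitly than the paper does, but it is the same argument.
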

\begin{proof}
By \Cref{exm:algETsystem} we have that $\cF$ on $[a,b]$ is an ET-system. Hence, \Cref{cor:posabDecomp} applies. We check both cases $n = 2m$ and $n=2m+1$ separately.

\underline{$n=2m$:}
By \Cref{cor:posabDecomp} we have that the zero set $\cZ(f^*)$ of $f^*$ has index $2m$ and contains $b$ with index $1$, i.e., $a\in\cZ(f^*)$ and all other zeros have index $2$.
Hence, $\cZ(f^*) = \{a=y_0 < y_1 < \dots < y_{m-1} < y_m = b\}$.
By \Cref{cor:posabDecomp} we have that $\cZ(f_*)$ also has index $2m$ and the zeros of $f_*$ and $f^*$ interlace.
Then the determinantal representations of $f_*$ and $f^*$ follow from \Cref{rem:doublezeros}.

\underline{$n=2m+1$:}
By \Cref{cor:posabDecomp} we have that $b\in\cZ(f^*)$ and since the index of $\cZ(f^*)$ is $2m+1$ we have that there are only double zeros $y_1,\dots,y_m\in (a,b)$ in $\cZ(f^*)$.
Similar we find that $a\in\cZ(f_*)$ since its index is odd and only double zeros $x_1,\dots,x_m\in (a,b)$ in $\cZ(f_*)$ remain.
By \Cref{cor:posabDecomp}(iii) the zeros $x_i$ and $y_i$ strictly interlace and the determinantal representation of $f_*$ and $f^*$ follow again from \Cref{rem:doublezeros}.
\end{proof}

Note, if $\alpha_0,\dots,\alpha_n\in\nset_0$ then by \Cref{exm:algETsystem} equation (\ref{eq:schurPolyRepr}) the algebraic polynomials $f_*$ and $f^*$ can also be written down with Schur polynomials.

\Cref{thm:algPosSatzab} does not to hold for $a=0$ and $\alpha_0>0$ or $\alpha_0,\dots,\alpha_k<0$.
In case $\alpha_0>0$ the determinantal representations of $f^*$ for $n=2m$ and $f_*$ for $n=2m+1$ are the zero polynomial.
In fact, in this case $\cF$ is not even a T-system since in \Cref{lem:determinant} the determinant contains a zero column if $x_0 = 0$.
We need to have $\alpha_0=0$ ($x^{\alpha_0} = 1$) to let $a=0$.
For $\alpha_0,\dots,\alpha_k<0$ we have singularities at $x=0$ and hence no T-system.

\begin{cor}\label{cor:azero}
If $\alpha_0=0$ in \Cref{thm:algPosSatzab} then \Cref{thm:algPosSatzab} also holds with $a=0$.
\end{cor}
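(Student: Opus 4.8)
The plan is to rerun the proof of \Cref{thm:algPosSatzab} on the interval $[0,b]$ and to isolate the single place where the hypothesis $a>0$ was actually used. First I would observe that with $0=\alpha_0<\alpha_1<\dots<\alpha_n$ the family $\cF=\{x^{\alpha_i}\}_{i=0}^n$ is a continuous T-system of order $n$ on $[0,b]$: this is exactly Example~\ref{exm:algPoly}(b), whose hypothesis is $\cX\subseteq[0,\infty)$ together with $\alpha_0=0$, and each $x^{\alpha_i}$ is continuous on $[0,b]$. Since Karlin's Corollary \Cref{cor:posabDecomp} (the Positivstellensatz) requires only a \emph{continuous} T-system, not an ET-system, and since $f\in\lin\cF$ with $f>0$ on $[0,b]$ and $a_n>0$, it applies on $[0,b]$ and produces the unique decomposition $f=f_*+f^*$ with $f_*,f^*\geq 0$, with $\cZ(f_*)$ and $\cZ(f^*)$ of index $n$, strictly interlacing, and $f^*(b)=0$.

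Next I would read off the zero configuration exactly as in the proof of \Cref{thm:algPosSatzab}, now with $a=0$. For $n=2m$ the index-$n$ condition together with $f^*(b)=0$ forces $\cZ(f^*)=\{0=y_0<y_1<\dots<y_{m-1}<y_m=b\}$, with $0$ and $b$ simple and the $y_i$ double, while $\cZ(f_*)=\{x_1,\dots,x_m\}\subset(0,b)$ consists of double zeros; for $n=2m+1$ one obtains $0$ as a simple zero of $f_*$ with double zeros $x_i\in(0,b)$, and $b$ as a simple zero of $f^*$ with double zeros $y_i\in(0,b)$. The point to retain is that in every case all double zeros lie in the open interval $(0,b)$, whereas the endpoint $0$ occurs only with multiplicity one.

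I would then supply the determinantal representations through \Cref{rem:doublezeros}. Each double zero is encoded by the confluent determinant \eqref{eq:generalg0construction}, and this is legitimate because on any $[a',b]$ with $0<a'<\min_i x_i$ (respectively $<\min_i y_i$) the system $\cF$ is an ET-system by \Cref{exm:algETsystem}; hence the confluent determinant is well defined, does not degenerate to the zero polynomial, and reproduces $f_*$ (respectively $f^*$). The simple zero at $0$ contributes the single Krein-matrix row $\bigl(x^{\alpha_0},\dots,x^{\alpha_n}\bigr)|_{x=0}=(1,0,\dots,0)$, which is nonzero precisely because $\alpha_0=0$; crucially, since $0$ is not a double zero, no derivative row $\bigl(f_0'(0),\dots,f_n'(0)\bigr)$ is ever needed, so the exponents with $0<\alpha_i<1$ (whose derivatives blow up at $0$) cause no trouble. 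This yields the two determinantal formulas of \Cref{thm:algPosSatzab} verbatim with $a$ replaced by $0$.

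The main obstacle is precisely this last verification: \Cref{thm:algPosSatzab} was stated with $a>0$ because $\cF$ is genuinely \emph{not} an ET-system on $[0,b]$ (\Cref{exm:nonETalg}), so the confluent-determinant machinery of \Cref{rem:doublezeros} cannot be invoked across the endpoint $0$. The argument survives only because the zero-counting coming from Karlin's Corollary places a \emph{single} zero at $0$ in each parity, so one never evaluates $f_i'$ at $0$, and $x^{\alpha_0}=1\neq0$ keeps the relevant determinant nonvanishing there; I would therefore check the endpoint multiplicity carefully rather than appeal to the ET-property globally. An alternative is a limiting argument---apply \Cref{thm:algPosSatzab} on $[a,b]$ for each $a\in(0,b)$ and pass to the limit $a\to 0^+$ using compactness of $(\lin\cF)^e$---but controlling the interlacing zeros so that none collapses or escapes to $0$ is more delicate, so I expect the direct argument above to be cleaner.
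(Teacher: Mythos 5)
Your proposal is correct and follows essentially the same route as the paper: the decisive step in both is to expand the determinant along the row at $x=0$, which equals $(1,0,\dots,0)$ precisely because $\alpha_0=0$, reducing to a determinant for the system $\{x^{\alpha_i}\}_{i=1}^n$ at points in $(0,b]$, where \Cref{exm:algETsystem} gives the ET-property and hence non-degeneracy. The paper phrases the setup as continuous dependence of the representations on $a$ plus a non-triviality check at $a=0$, whereas you invoke \Cref{cor:posabDecomp} directly on $[0,b]$ (legitimate, since only a continuous T-system is needed), but the substance is identical.
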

\begin{proof}
The determinantal representations of $f_*$ for $n=2m+1$ and $f^*$ for $n=2m$ in \Cref{thm:algPosSatzab} continuously depend on $a$.
It is sufficient to show that these representations are non-trivial (not the zero polynomial) for $a=0$.
We show this for $f_*$ in case (ii) $n=2m+1$.
The other cases are equivalent.

For $\varepsilon>0$ small enough we set
\begin{align*}
g_\varepsilon(x) &= -\varepsilon^{-m}\cdot\det\begin{pmatrix}
1 & x^{\alpha_1} & x^{\alpha_2} & x^{\alpha_3} & \dots & x^{\alpha_{2m}} & x^{\alpha_{2m+1}}\\
x & 0 & x_1 & x_1+\varepsilon & \dots & x_m & x_m+\varepsilon
\end{pmatrix}\\
&= -\varepsilon^{-m}\cdot \det\begin{pmatrix}
1 & x^{\alpha_1} & x^{\alpha_2} & \dots & x^{\alpha_{2m+1}}\\
1 & 0 & 0 & \dots & 0\\
1 & x_1^{\alpha_1} & x_1^{\alpha_2} & \dots & x_1^{\alpha_{2m+1}}\\
\vdots & \vdots & \vdots & & \vdots\\
1 & (x_{m}+\varepsilon)^{\alpha_1} & (x_{m}+\varepsilon)^{\alpha_2} & \dots & (x_{m}+\varepsilon)^{\alpha_{2m+1}}
\end{pmatrix}
\intertext{develop with respect to the second row}
&= \varepsilon^{-m}\cdot\det\begin{pmatrix}
x^{\alpha_1} & x^{\alpha_2} & \dots & x^{\alpha_{2m-1}}\\
x_1^{\alpha_1} & x_1^{\alpha_2} & \dots & x_1^{\alpha_{2m-1}}\\
\vdots & \vdots & & \vdots\\
(x_{m}+\varepsilon)^{\alpha_1} & (x_{m}+\varepsilon)^{\alpha_2} & \dots & (x_{m}+\varepsilon)^{\alpha_{2m+1}}
\end{pmatrix}\\
&= \varepsilon^{-m}\cdot\det\begin{pmatrix}
x^{\alpha_1} & x^{\alpha_2} & x^{\alpha_3} & \dots & x^{\alpha_{2m}} & x^{\alpha_{2m+1}}\\
x & x_1 & x_1+\varepsilon & \dots & x_m & x_m+\varepsilon
\end{pmatrix}.
\end{align*}
Then $x_1,x_1+\varepsilon,\dots,x_m,x_m+\varepsilon\in (0,b]$, i.e., $\{x^{\alpha_i}\}_{i=1}^n$ is an ET-system on $[a',b]$ with $0=a < a' < x_1$, see \Cref{exm:algETsystem}.
By \Cref{rem:doublezeros} the representation is not the zero polynomial which ends the proof.
\end{proof}

The \Cref{thm:algPosSatzab} is a complete description of $\inter(\lin\cF)_+$.
Since $\cF$ is continuous on the compact interval $[a,b]$ and $x^{\alpha_0}>0$ on $[a,b]$, we have that the truncated moment cone is closed and hence $(\lin\cF)_+$ and the moment cone are dual to each other.
With \Cref{thm:algPosSatzab} we can now write down the conditions for the sparse truncated Hausdorff moment problem on $[a,b]$ with $a>0$.
We are not aware of a reference for the following result.

\begin{thm}[Sparse Truncated Hausdorff Moment Problem on {$[a,b]$ with $a>0$}]\label{thm:sparseTruncHausd}
Let $n\in\nset$, $\alpha_0,\dots,\alpha_n\in[0,\infty)$ with $\alpha_0 < \dots < \alpha_n$, and $a,b$ with $0 < a < b$. Set $\cF = \{x^{\alpha_i}\}_{i=0}^n$. Then the following are equivalent:
\begin{enumerate}[(i)]
\item $L:\lin\cF\to\rset$ is a truncated $[a,b]$-moment functional.

\item $L(p)\geq 0$ holds for all
\begin{align*}
p(x) &:= \begin{cases}
\det \begin{pmatrix}
x^{\alpha_0} & x^{\alpha_1} & x^{\alpha_2} & \dots & x^{\alpha_{2m-1}} & x^{\alpha_{2m}}\\
x & (x_1 & x_1) & \dots & (x_m & x_m)
\end{pmatrix}\\
-\det\begin{pmatrix}
x^{\alpha_0} & x^{\alpha_1} & x^{\alpha_2} & x^{\alpha_3} & \dots & x^{\alpha_{2m-2}} & x^{\alpha_{2m-1}} & x^{\alpha_{2m}}\\
x & a & (x_1 & x_1) & \dots & (x_{m-1} & x_{m-1}) & b
\end{pmatrix}
\end{cases} \tag*{if $n = 2m$}
\intertext{or}
p(x) &:=\begin{cases}
-\det\begin{pmatrix}
 x^{\alpha_0} & x^{\alpha_1} & x^{\alpha_2} & x^{\alpha_3} & \dots & x^{\alpha_{2m}} & x^{\alpha_{2m+1}}\\
x & a & (x_1 & x_1) & \dots & (x_m & x_m)
\end{pmatrix}\\
\det\begin{pmatrix}
x^{\alpha_0} & x^{\alpha_1} & x^{\alpha_2} & \dots & x^{\alpha_{2m-1}} & x^{\alpha_{2m}} & x^{\alpha_{2m+1}}\\
x & (x_1 & x_1) & \dots & (x_m & x_m) & b
\end{pmatrix}
\end{cases} \tag*{if $n=2m+1$}
\end{align*}
and all $x_1,\dots,x_m$ with $a < x_1 < \dots < x_m < b$.
\end{enumerate}
\end{thm}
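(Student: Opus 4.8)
The plan is to establish the two implications through a Haviland-type duality between the cone $(\lin\cF)_+$ of polynomials in $\lin\cF$ that are non-negative on $[a,b]$ and the truncated moment cone $\cM:=\{L_\mu \mid \mu\ \text{a measure on}\ [a,b]\}\subseteq(\lin\cF)^*$, fed by the explicit decomposition of \Cref{thm:algPosSatzab} (equivalently \Cref{cor:posabDecomp}). I use throughout that $a>0$ and $\alpha_0\geq 0$ make every $x^{\alpha_i}$ continuous and strictly positive on the compact interval $[a,b]$; in particular $\cF$ is an ET-system by \Cref{exm:algETsystem}, the element $x^{\alpha_0}$ lies in $\inter(\lin\cF)_+$, and the moment cone $\cM$ is closed. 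The implication (i)$\Rightarrow$(ii) is then immediate: if $L=L_\mu$ for a measure $\mu$ on $[a,b]$, each polynomial $p$ occurring in (ii) coincides, by \Cref{thm:algPosSatzab}, with one of the non-negative summands $f_*$ or $f^*$ up to a positive factor, hence $p\geq 0$ on $[a,b]$, so that $L(p)=\int_a^b p\,\diff\mu\geq 0$.

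For (ii)$\Rightarrow$(i) I would argue in three steps. First, a separation argument for closed convex cones (the finite-dimensional form of Haviland's Theorem) shows that $L\in\cM$ if and only if $L(g)\geq 0$ for all $g\in(\lin\cF)_+$: the inclusion $\cM\subseteq(\lin\cF)_+^*$ is clear, and conversely, were $L$ non-negative on $(\lin\cF)_+$ but outside the closed cone $\cM$, a separating functional would produce some $g\in\lin\cF$ with $g\geq 0$ on $[a,b]$ (test against point masses $\delta_x$, $x\in[a,b]$) yet $L(g)<0$, contradicting $L\in(\lin\cF)_+^*$. Second, I reduce the condition $L\geq 0$ on $(\lin\cF)_+$ to the hypothesis: for strictly positive $g\in\inter(\lin\cF)_+$, \Cref{cor:posabDecomp} gives $g=f_*+f^*$ with $f_*,f^*\geq 0$, interlacing zero sets of index $n$, and $f^*(b)=0$, and the forced zero structure (as in the proof of \Cref{thm:algPosSatzab}) identifies $f_*=c_*p_1$ and $f^*=c^*p_2$ with $c_*,c^*>0$ and $p_1,p_2$ among the polynomials listed in (ii), evaluated at the interior tuple determined by $g$; hypothesis (ii) then gives $L(g)=c_*L(p_1)+c^*L(p_2)\geq 0$. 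Third, since $(\lin\cF)_+$ is a closed convex cone with non-empty interior it equals $\overline{\inter(\lin\cF)_+}$, and $L$ is continuous because $\lin\cF$ is finite-dimensional, so non-negativity of $L$ on the strictly positive polynomials passes to all of $(\lin\cF)_+$. Combining the three steps yields $L\in\cM$, i.e.\ $L$ is a truncated $[a,b]$-moment functional.

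The step I expect to be the crux is the duality equivalence in the first step of (ii)$\Rightarrow$(i), which rests entirely on the closedness of $\cM$; this is exactly where the compactness of $[a,b]$ and the strict positivity $x^{\alpha_0}>0$, hence the standing hypothesis $a>0$, are indispensable. A secondary technical point is the limiting argument of the third step, which needs $\inter(\lin\cF)_+\neq\emptyset$ (again ensured by $a>0$) so that non-negativity of $L$ can be pushed from the strictly positive polynomials to the boundary of the cone. Finally, invoking \Cref{cor:posabDecomp} rather than \Cref{thm:algPosSatzab} in the second step is deliberate: it lets the argument cover strictly positive $g$ whose leading coefficient is negative, with the decomposition and the positivity of $c_*,c^*$ persisting in that case.
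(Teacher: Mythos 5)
Your proof is correct and follows essentially the same route as the paper's: both establish (ii) $\Rightarrow$ (i) by identifying the truncated $[a,b]$-moment cone with the dual of $(\lin\cF)_+$ (closedness coming from compactness of $[a,b]$ and strict positivity of $x^{\alpha_0}$), decomposing strictly positive elements of $\lin\cF$ via Karlin's Corollary / \Cref{thm:algPosSatzab} into conic combinations of the listed determinantal polynomials, and then passing from the strictly positive polynomials to all of $(\lin\cF)_+$ by a limiting argument. The only cosmetic differences are that the paper uses the explicit perturbation $f_\varepsilon=f+\varepsilon\cdot x^{\alpha_0}$ where you use the abstract identity $(\lin\cF)_+=\overline{\inter(\lin\cF)_+}$ together with continuity of $L$, and that you invoke \Cref{cor:posabDecomp} directly so as to cover strictly positive $g$ with non-positive leading coefficient, a point the paper's appeal to \Cref{thm:algPosSatzab} (which formally assumes $a_n>0$) glosses over.
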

\begin{proof}
The implication (i) $\Rightarrow$ (ii) is clear since all given polynomials $p$ are non-negative on $[a,b]$.
It is therefore sufficient to prove (ii) $\Rightarrow$ (i).

Since $a>0$ we have that $x^{\alpha_0} > 0$ on $[a,b]$ and since $[a,b]$ is compact we have that the moment cone $((\lin\cF)_+)^*$ as the dual of the cone of non-negative (sparse) polynomials $(\lin\cF)_+$ is a closed pointed cone.

To establish $L\in ((\lin\cF)_+)^*$ it is sufficient to have $L(f)\geq 0$ for all $f\in(\lin\cF)_+$.
Let $f\in(\lin\cF)_+$.
Then for all $\varepsilon>0$ we have $f_\varepsilon := f+\varepsilon\cdot x^{\alpha_0} > 0$ on $[a,b]$, i.e., by \Cref{thm:algPosSatzab} $f_\varepsilon$ is a conic combination of the polynomials $p$ in (ii) and hence $L(f) + \varepsilon\cdot L(x^{\alpha_0}) = L(f_\varepsilon) \geq 0$ for all $\varepsilon>0$.
Since $x^{\alpha_0}>0$ on $[a,b]$ we also have that $x^{\alpha_0}$ is a conic combination of the polynomials $p$ in (ii) and therefore $L(x^{\alpha_0}) \geq 0$.
Then $L(f)\geq 0$ follows from $\varepsilon\to 0$ which proves (i).
\end{proof}

\begin{cor}
If $\alpha_0 = 0$ in \Cref{thm:sparseTruncHausd} then \Cref{thm:sparseTruncHausd} also holds with $a=0$, i.e., the following are equivalent:
\begin{enumerate}[(i)]
\item $L:\lin\cF\to\rset$ is a truncated $[0,b]$-moment functional.

\item $L(p)\geq 0$ holds for all
\begin{align*}
p(x) &:= \begin{cases}
\det \begin{pmatrix}
1 & x^{\alpha_1} & x^{\alpha_2} & \dots & x^{\alpha_{2m-1}} & x^{\alpha_{2m}}\\
x & (x_1 & x_1) & \dots & (x_m & x_m)
\end{pmatrix}\\
\det\begin{pmatrix}
x^{\alpha_1} & x^{\alpha_2} & x^{\alpha_3} & \dots & x^{\alpha_{2m-2}} & x^{\alpha_{2m-1}} & x^{\alpha_{2m}}\\
x & (x_1 & x_1) & \dots & (x_{m-1} & x_{m-1}) & b
\end{pmatrix}
\end{cases} \tag*{if $n = 2m$}
\intertext{or}
p(x) &:=\begin{cases}
\det\begin{pmatrix}
x^{\alpha_1} & x^{\alpha_2} & x^{\alpha_3} & \dots & x^{\alpha_{2m}} & x^{\alpha_{2m+1}}\\
x & (x_1 & x_1) & \dots & (x_m & x_m)
\end{pmatrix}\\
\det\begin{pmatrix}
1 & x^{\alpha_1} & x^{\alpha_2} & \dots & x^{\alpha_{2m-1}} & x^{\alpha_{2m}} & x^{\alpha_{2m+1}}\\
x & (x_1 & x_1) & \dots & (x_m & x_m) & b
\end{pmatrix}
\end{cases} \tag*{if $n=2m+1$}
\end{align*}
and all $x_1,\dots,x_m$ with $a < x_1 < \dots < x_m < b$.
\end{enumerate}
\end{cor}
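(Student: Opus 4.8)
The plan is to run the proof of \Cref{thm:sparseTruncHausd} essentially verbatim, with the only changes being that the ambient interval is now $[0,b]$, that the strictly positive generator $x^{\alpha_0}$ is replaced by the constant function $1$ (since $\alpha_0=0$ forces $x^{\alpha_0}=1$), and that the Positivstellensatz \Cref{thm:algPosSatzab} is replaced by its $a=0$ extension \Cref{cor:azero}. As before, the implication (i) $\Rightarrow$ (ii) is immediate: each polynomial $p$ listed in (ii) is, by \Cref{cor:azero}, one of the non-negative summands $f_*$ or $f^*$ of a Karlin decomposition on $[0,b]$, hence $p\geq 0$ on $[0,b]$, and thus $L(p)\geq 0$ for every truncated $[0,b]$-moment functional $L$.

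For (ii) $\Rightarrow$ (i) I would first check that the two structural facts used in the $[a,b]$ proof survive the passage to $a=0$. The interval $[0,b]$ is still compact and, because $\alpha_0=0$, the element $1=x^{\alpha_0}$ is strictly positive on $[0,b]$; this is exactly the input that makes the truncated moment cone closed and pointed and makes $(\lin\cF)_+$ and the moment cone dual to one another. Consequently it again suffices to show $L(f)\geq 0$ for every $f\in(\lin\cF)_+$, i.e.\ for every $f\in\lin\cF$ with $f\geq 0$ on $[0,b]$. Given such an $f$ and any $\varepsilon>0$, the perturbation $f_\varepsilon:=f+\varepsilon\cdot 1$ is strictly positive on $[0,b]$, so \Cref{cor:azero} writes $f_\varepsilon=c_* p_{(1)}+c^* p_{(2)}$ with $c_*,c^*>0$ and $p_{(1)},p_{(2)}$ among the determinantal polynomials of (ii); hence $L(f)+\varepsilon L(1)=L(f_\varepsilon)\geq 0$. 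Applying the same decomposition to the strictly positive constant $1$ shows that $1$ is itself a conic combination of these $p$'s, so $L(1)\geq 0$, and letting $\varepsilon\to 0$ yields $L(f)\geq 0$.

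The only genuinely new point compared with \Cref{thm:sparseTruncHausd} is that the Karlin decomposition must still produce non-degenerate determinantal representations once $a$ is pushed to $0$; for a generic $\alpha_0>0$ the $a=0$ forms collapse to the zero polynomial, which is why the hypothesis $\alpha_0=0$ is essential. This is precisely the content of \Cref{cor:azero}: there the relevant determinant with the node at $0$ is expanded along the constant column $x^{\alpha_0}=1$, reducing it to a nonzero determinant of an ET-system on an interval $[a',b]$ with $a'>0$, and thereby certifying both the simplified shapes of the $p$'s appearing in (ii) and their non-triviality. I therefore expect the main (but by now routine) bookkeeping to be matching the determinantal templates in (ii) to the summands $f_*$ and $f^*$ delivered by \Cref{cor:azero}; everything else is a transcription of the compact-interval argument.
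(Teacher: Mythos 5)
Your proposal is correct and follows exactly the route the paper intends: the paper's own proof is the single line ``Follows from \Cref{cor:azero}'', meaning precisely that the argument of \Cref{thm:sparseTruncHausd} is rerun verbatim on $[0,b]$ with \Cref{cor:azero} supplying the $a=0$ Positivstellensatz, which is what you do. Your additional remarks --- that compactness of $[0,b]$ together with $x^{\alpha_0}=1>0$ still yields the closed pointed moment cone and the duality, and that \Cref{cor:azero} is exactly what guarantees the determinantal forms do not degenerate at the node $0$ --- correctly identify the only points that need checking.
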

\begin{proof}
Follows from \Cref{cor:azero}.
\end{proof}

For the following we want to remind the reader of the M\"untz--Sz\'asz Theorem \cite{muntz14,szasz16}.
It states that for real exponents $\alpha_0=0 < \alpha_1 < \alpha_2 < \dots$ the vector space $\lin\{x^{\alpha_i}\}_{i\in\nset_0}$ of finite linear combinations is dense in $C([0,1])$ if and only if $\sum_{i\in\nset} \frac{1}{\alpha_i} = \infty$.

We state the following only for the classical case of the interval $[0,1]$.
Other cases $[a,b]\subset [0,\infty)$ are equivalent.
We are not aware of a reference for the following result.
Hausdorff required $\alpha_i\to\infty$.
The M\"untz--Sz\'asz Theorem does not require $\alpha_i\to\infty$.
The conditions $\alpha_0=0$ and $\sum_{i\in\nset} \frac{1}{\alpha_i} = \infty$ already appear in \cite[eq.\ (17)]{hausdo21a}.

\begin{thm}[Sparse Hausdorff Moment Problem]\label{thm:sparseHausd}
Let $\{\alpha_i\}_{i\in\nset_0}\subset [0,\infty)$ with $0=\alpha_0 < \alpha_1 < \dots$ and $\sum_{i\in\nset} \frac{1}{\alpha_i} = \infty$. Let $\cF = \{x^{\alpha_i}\}_{i\in\nset_0}$.
The following are equivalent:
\begin{enumerate}[(i)]
\item $L:\lin\cF\to\rset$ is a $[0,1]$-moment functional.

\item $L(p)\geq 0$ holds for all $p\in (\lin\cF)_+$.

\item $L(p)\geq 0$ holds for all $p\in\lin\cF$ with $p>0$.

\item $L(p)\geq 0$ holds for all
\[p(x) = \begin{cases}
\det \begin{pmatrix}
1 & x^{\alpha_1} & x^{\alpha_2} & \dots & x^{\alpha_{2m-1}} & x^{\alpha_{2m}}\\
x & (x_1 & x_1) & \dots & (x_m & x_m)
\end{pmatrix},\\
\det\begin{pmatrix}
x^{\alpha_1} & x^{\alpha_2} & x^{\alpha_3} & \dots & x^{\alpha_{2m-2}} & x^{\alpha_{2m-1}} & x^{\alpha_{2m}}\\
x & (x_1 & x_1) & \dots & (x_{m-1} & x_{m-1}) & 1
\end{pmatrix},\\
\det\begin{pmatrix}
x^{\alpha_1} & x^{\alpha_2} & x^{\alpha_3} & \dots & x^{\alpha_{2m}} & x^{\alpha_{2m+1}}\\
x & (x_1 & x_1) & \dots & (x_m & x_m)
\end{pmatrix},\ \text{and}\\
\det\begin{pmatrix}
1 & x^{\alpha_1} & x^{\alpha_2} & \dots & x^{\alpha_{2m-1}} & x^{\alpha_{2m}} & x^{\alpha_{2m+1}}\\
x & (x_1 & x_1) & \dots & (x_m & x_m) & 1
\end{pmatrix}
\end{cases}\]
for all $m\in\nset$ and all $0 < x_1 < x_2 < \dots < x_m < 1$.
\end{enumerate}
\end{thm}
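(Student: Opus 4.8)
\emph{Plan.} The plan is to establish the trivial links first and then isolate the two substantive implications. The implication (i) $\Rightarrow$ (ii) is immediate from the definition of a moment functional: if $L(p)=\int_0^1 p\,\diff\mu$, then $L(p)\geq 0$ for every $p\in(\lin\cF)_+$. Likewise (ii) $\Rightarrow$ (iii) is trivial since a strictly positive polynomial is non-negative, and (ii) $\Rightarrow$ (iv) is trivial because each determinantal polynomial listed in (iv) is exactly an $f_*$ or $f^*$ arising in the decomposition of \Cref{thm:algPosSatzab} and hence non-negative on $[0,1]$ (cf.\ \Cref{rem:doublezeros}). For (iii) $\Rightarrow$ (ii) I would exploit that $\alpha_0=0$, so $1=x^{\alpha_0}\in\lin\cF$: given $p\in(\lin\cF)_+$ the polynomial $p+\varepsilon\in\lin\cF$ is strictly positive for every $\varepsilon>0$, whence $L(p)+\varepsilon L(1)=L(p+\varepsilon)\geq 0$; letting $\varepsilon\to 0$ and using $L(1)\geq 0$ (as $1>0$, by (iii)) gives $L(p)\geq 0$. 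Thus the only genuine steps are (iv) $\Rightarrow$ (iii) and (ii) $\Rightarrow$ (i).

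For (iv) $\Rightarrow$ (iii), fix $p\in\lin\cF$ with $p>0$ on $[0,1]$; only finitely many exponents occur, say $p\in\lin\{x^{\alpha_i}\}_{i=0}^n$. The obstacle here is that \Cref{thm:algPosSatzab}, and its $a=0$ version \Cref{cor:azero}, require a positive leading coefficient, which a polynomial merely positive on $[0,1]$ need not have. I would bypass this by passing to the next exponent: since $x^{\alpha_{n+1}}\geq 0$ on $[0,1]$, the polynomial $p_\varepsilon:=p+\varepsilon\,x^{\alpha_{n+1}}\in\lin\{x^{\alpha_i}\}_{i=0}^{n+1}$ is still strictly positive on $[0,1]$ and now has positive leading coefficient $\varepsilon$. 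Applying \Cref{cor:azero} at level $n+1$ with $b=1$ decomposes $p_\varepsilon=(p_\varepsilon)_*+(p_\varepsilon)^*$ as a conic combination (with positive constants $c_*,c^*$) of precisely the determinantal polynomials appearing in (iv); regardless of the parity of $n+1$ these occur among the four listed families. Hence (iv) gives $L(p_\varepsilon)\geq 0$, and letting $\varepsilon\to 0$ (with $L(x^{\alpha_{n+1}})$ finite) yields $L(p)\geq 0$, i.e.\ (iii).

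The hard part is (ii) $\Rightarrow$ (i): producing a representing measure on $[0,1]$, which is the Haviland-type analytic core. Here the M\"untz--Sz\'asz hypothesis $\sum_{i}1/\alpha_i=\infty$ enters \cite{muntz14,szasz16}, guaranteeing that $\lin\cF$ is dense in $C([0,1])$. The key estimate is that positivity forces $L$ to be sup-norm bounded: for $p\in\lin\cF$ with $\|p\|_\infty\leq 1$ one has $1\pm p\geq 0$ on $[0,1]$ and $1\pm p\in\lin\cF$, so $L(1\pm p)\geq 0$ gives $|L(p)|\leq L(1)$, hence $|L(p)|\leq L(1)\,\|p\|_\infty$ for all $p$. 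By density $L$ extends uniquely to a bounded functional $\tilde L$ on $C([0,1])$, and $\tilde L$ is positive: for $f\in C([0,1])$ with $f\geq 0$ and any $\delta>0$, uniform approximants of $f+\delta$ are eventually strictly positive, so lie in $(\lin\cF)_+$ with non-negative $L$-value, whence $\tilde L(f+\delta)\geq 0$ and $\tilde L(f)\geq 0$ as $\delta\to 0$. The Riesz--Markov--Kakutani theorem then supplies a positive measure $\mu$ on $[0,1]$ with $\tilde L(f)=\int_0^1 f\,\diff\mu$, and restricting to $f=x^{\alpha_i}$ shows $L$ is a $[0,1]$-moment functional. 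Alternatively, and without invoking density, one may note that $\lin\cF$ is an adapted space on the compact set $[0,1]$ (it contains the strictly positive function $1$ and satisfies $\lin\cF=(\lin\cF)_+-(\lin\cF)_+$), so the adapted-space representation theorem \cite{choquet69,phelpsLectChoquetTheorem,schmudMomentBook} yields the representing measure directly. I expect this representation step to be the main obstacle, the remaining links being essentially bookkeeping.
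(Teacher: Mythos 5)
Your proposal is correct and follows essentially the same route as the paper: the trivial implications, the sparse Positivstellensatz \Cref{thm:algPosSatzab} (via \Cref{cor:azero}) for the equivalence of (iii) and (iv), and M\"untz--Sz\'asz density plus the Riesz--Markov--Kakutani theorem (or, alternatively, adapted spaces) for (ii) $\Rightarrow$ (i). You are in fact more careful than the paper on two points it glosses over: the hypothesis $a_n>0$ in \Cref{thm:algPosSatzab}, which your $\varepsilon\, x^{\alpha_{n+1}}$ trick handles cleanly, and the explicit bound $|L(p)|\leq L(1)\,\|p\|_\infty$ needed to extend $L$ to a positive functional on $C([0,1])$ before representing it by a measure.
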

\begin{proof}
The implications ``(i) $\Rightarrow$ (ii) $\Leftrightarrow$ (iii)'' are clear and ``(iii) $\Leftrightarrow$ (iv)'' follows from \Cref{thm:algPosSatzab}. It is therefore sufficient to show ``(ii) $\Rightarrow$ (i)''.

Let $f\in C([0,1])$ with $f>0$.
Since $\lin\cF$ is dense in $C([0,1])$ by the M\"untz--Sz\'asz Theorem there are sequences $\{g_i\}_{i\in\nset_0}$ and $\{h_i\}_{i\in\nset_0}$ with $0 < g_i < f < h_i$ and $\|g_i-h_i\|_\infty\to 0$ as $i\to\infty$.
Hence, $L(f)\geq 0$.
Since $f\in C([0,1])$ with $f>0$ was arbitrary we have that $L(f)\geq 0$ for all $f\in C([0,1])$ with $f\geq 0$.
Then by the Riesz--Markov--Kakutani Representation Theorem we have that $L$ has a unique representing measure.
\end{proof}

The previous proof can be simplified by using Choquet's theory of adapted spaces, see \cite{choquet69} or for a more modern formulation \cite{phelpsLectChoquetTheorem} or \cite[Ch.\ 1]{schmudMomentBook}.
With that we can even remove the use of the M\"untz--Sz\'asz Theorem and therefore the condition $\sum_{i\in\nset} \frac{1}{\alpha_i} = \infty$.
Additionally, we can allow for negative exponents.
We will use this approach below and also in all other proofs from here on.
The following theorem has to our knowledge not been presented before.

\begin{thm}[General Sparse Hausdorff Moment Problem on {$[a,b]$ with $0\leq a < b$}]\label{thm:generalSparseHausd}
Let $I\subset\nset_0$ be an index set (finite or infinite), let $\{\alpha_i\}_{i\in I}$ be such that $\alpha_i\neq \alpha_j$ for all $i\neq j$ and
\begin{enumerate}[(a)]
\item if $a=0$ then $\{\alpha_i\}_{i\in I}\subset [0,\infty)$ with $\alpha_i=0$ for an $i\in I$, or

\item if $a>0$ then $\{\alpha_i\}_{i\in I}\subset\rset$.
\end{enumerate}
Let $\cF = \{x^{\alpha_i}\}_{i\in I}$.
Then the following are equivalent:
\begin{enumerate}[(i)]
\item $L:\lin\cF\to\rset$ is a Hausdorff moment functional.

\item $L(p)\geq 0$ holds for all $p\in (\lin\cF)_+$.

\item $L(p)\geq 0$ holds for all $p\in\lin\cF$ with $p>0$.

\item $L(p)\geq 0$ holds for all
\[p(x) = \begin{cases}
\det \begin{pmatrix}
x^{\alpha_{i_0}} & x^{\alpha_{i_1}} & x^{\alpha_{i_2}} & \dots & x^{\alpha_{i_{2m-1}}} & x^{\alpha_{2m}}\\
x & (x_1 & x_1) & \dots & (x_m & x_m)
\end{pmatrix}, & \text{if $|I| = 2m$ or $\infty$,}\\
\det\begin{pmatrix}
x^{\alpha_{i_1}} & x^{\alpha_{i_2}} & x^{\alpha_{i_3}} & \dots & x^{\alpha_{i_{2m-2}}} & x^{\alpha_{i_{2m-1}}} & x^{\alpha_{i_{2m}}}\\
x & (x_1 & x_1) & \dots & (x_{m-1} & x_{m-1}) & b
\end{pmatrix}, & \text{if $|I| = 2m$ or $\infty$,}\\
\det\begin{pmatrix}
x^{\alpha_{i_1}} & x^{\alpha_{i_2}} & x^{\alpha_{i_3}} & \dots & x^{\alpha_{i_{2m}}} & x^{\alpha_{i_{2m+1}}}\\
x & (x_1 & x_1) & \dots & (x_m & x_m)
\end{pmatrix},\  & \text{if $|I| = 2m+1$ or $\infty$, and}\\
\det\begin{pmatrix}
x^{\alpha_{i_0}} & x^{\alpha_{i_1}} & x^{\alpha_{i_2}} & \dots & x^{\alpha_{i_{2m-1}}} & x^{\alpha_{i_{2m}}} & x^{\alpha_{i_{2m+1}}}\\
x & (x_1 & x_1) & \dots & (x_m & x_m) & b
\end{pmatrix},  & \text{if $|I| = 2m+1$ or $\infty$,}
\end{cases}\]
for all $m\in\nset$ if $|I|=\infty$, all $0 < x_1 < x_2 < \dots < x_m < b$, and all $\alpha_{i_0} < \alpha_{i_1} < \dots < \alpha_{i_m}$ with $\alpha_{i_0}=0$ if $a=0$.
\end{enumerate}
If additionally $\sum_{i:\alpha_i\neq 0} \frac{1}{|\alpha_i|} = \infty$ then $L$ is determinate.
\end{thm}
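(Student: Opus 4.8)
The plan is to run the cycle (i) $\Rightarrow$ (ii) $\Leftrightarrow$ (iii) $\Leftrightarrow$ (iv) $\Rightarrow$ (i) by combining the finite Positivstellensatz (\Cref{thm:algPosSatzab} together with \Cref{cor:azero}) with Choquet's theory of adapted spaces, which lets us dispense with both the M\"untz--Sz\'asz density hypothesis and the restriction to non-negative exponents that were needed in \Cref{thm:sparseHausd}. The implication (i) $\Rightarrow$ (ii) is immediate, and (ii) $\Leftrightarrow$ (iii) is the standard $\varepsilon$-perturbation: the hypotheses guarantee a strictly positive element $g\in\lin\cF$, namely $g=x^{\alpha_{i_0}}>0$ when $a>0$ and $g=1=x^0$ when $a=0$ (this is exactly why $\alpha_i=0$ is imposed in case (a)). For $f\in(\lin\cF)_+$ one applies (iii) to $f+\varepsilon g>0$ and lets $\varepsilon\downarrow 0$, using $L(f+\varepsilon g)=L(f)+\varepsilon L(g)$.

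For (iii) $\Leftrightarrow$ (iv) I would reduce to finite ET-subsystems. Any $f\in\lin\cF$ involves only finitely many exponents; after adjoining $\alpha=0$ when $a=0$, the relevant finite family $\{x^{\alpha_{i_j}}\}$ is an ET-system on $[a,b]$ by \Cref{exm:algETsystem}. Hence \Cref{thm:algPosSatzab} (respectively \Cref{cor:azero} when $a=0$) writes every such $f>0$ as $f=f_*+f^*$, a conic combination of two of the normalized determinantal generators listed in (iv); this yields (iv) $\Rightarrow$ (iii), while (iii) $\Rightarrow$ (iv) follows because each generator is a limit of strictly positive polynomials, again by the $\varepsilon$-trick. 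The only bookkeeping is to check that, as the finite subsystem ranges over all finite selections and both parities, the four determinantal types in (iv) exhaust the pieces $f_*,f^*$ produced by the Positivstellensatz; the reduced $a=0$ forms arise by expanding the determinant along the row carrying the node $a=0$ and the constant column, exactly as in the proof of \Cref{cor:azero}.

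The decisive step is (ii) $\Rightarrow$ (i). Since $\lin\cF$ is in general neither dense nor finite-codimensional in $C([a,b])$, the Riesz--Markov--Kakutani argument of \Cref{thm:sparseHausd} is unavailable, and I would instead verify that $\lin\cF$ is an adapted space on the compact set $[a,b]$ in the sense of \cite{choquet69,phelpsLectChoquetTheorem,schmudMomentBook}. On a compact space the domination-at-infinity condition is vacuous, so it suffices that $\lin\cF$ contain a strictly positive function and be spanned by its non-negative elements; both follow from the existence of $g$ above, since $f=(f+cg)-cg$ with $c\gg 0$ exhibits any $f\in\lin\cF$ as a difference of non-negative elements. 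Choquet's theorem then converts the positivity in (ii) directly into a representing Radon---hence finite Borel---measure on $[a,b]$, which is precisely (i). This is the main obstacle in the sense that all the genuine content is hidden in the correct choice and verification of the adapted space, and the presence of the strictly positive element (guaranteed by $\alpha_i=0$ when $a=0$) is exactly what makes the machinery applicable.

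Finally, for determinacy under $\sum_{i:\alpha_i\neq 0}\frac{1}{|\alpha_i|}=\infty$ I would show that $\lin\cF$ is then dense in $C([a,b])$, so that any two representing measures agree on a dense subspace and hence coincide by the uniqueness in the Riesz--Markov--Kakutani theorem. Density is the content of the M\"untz--Sz\'asz theorem; the only subtlety is the negative exponents, which I would handle by the inversion $x\mapsto 1/x$ mapping $[a,b]\subset(0,\infty)$ onto a compact interval and sending an exponent $\alpha_i<0$ to $|\alpha_i|>0$. Thus the divergence of $\sum_{\alpha_i>0}\alpha_i^{-1}+\sum_{\alpha_i<0}|\alpha_i|^{-1}=\sum_{\alpha_i\neq 0}|\alpha_i|^{-1}$, together with the presence of the constant function, yields density via the classical M\"untz--Sz\'asz criterion.
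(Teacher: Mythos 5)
Your proposal is correct and follows essentially the same route as the paper: reduction of (iii)$\Leftrightarrow$(iv) to the finite Positivstellensatz (\Cref{thm:algPosSatzab} and \Cref{cor:azero}), the adapted-space version of Choquet/Haviland for (ii)$\Rightarrow$(i) (which is indeed trivial to verify here because $[a,b]$ is compact and $\lin\cF$ contains a strictly positive element), and the split of the exponents into positive and negative groups with the inversion $x\mapsto 1/x$ for the M\"untz--Sz\'asz determinacy argument. The extra detail you supply on why $\lin\cF$ is adapted and on the $\varepsilon$-perturbation is exactly what the paper leaves implicit.
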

\begin{proof}
The case $|I|<\infty$ is \Cref{thm:sparseTruncHausd}.
We therefore prove the case $|I|=\infty$.
The choice $\alpha_{i_0} < \alpha_{i_1} < \dots < \alpha_{i_m}$ with $\alpha_{i_0}=0$ if $a=0$ makes $\{x^{\alpha_{i_j}}\}_{j=0}^m$ a T-system.
The implications ``(i) $\Rightarrow$ (ii) $\Leftrightarrow$ (iii)'' are clear and ``(iii) $\Leftrightarrow$ (iv)'' is \Cref{thm:algPosSatzab}.
It is therefore sufficient to show ``(ii) $\Rightarrow$ (i)''.

The space $\lin\cF$ is an adapted space and the assertion follows therefore from \cite[Thm.\ 1.8]{schmudMomentBook}.

For the determinacy of $L$ split $\{\alpha_i\}_{i\in I}$ into positive and negative exponents. If $\sum_{i:\alpha_i\neq 0} \frac{1}{|\alpha_i|} = \infty$ then the corresponding sum over at least one group is infinite. If the sum over the positive exponents is infinite apply the M\"untz--Sz\'asz Theorem. If the sum over the negative exponents is infinite apply the M\"untz--Sz\'asz Theorem to $\{(x^{-1})^{-\alpha_i}\}_{i\in I: \alpha_i < 0}$ since $a>0$.
\end{proof}

Note, since $[a,b]$ is compact the fact that $\{x^{\alpha_i}\}_{i\in I}$ is an adapted space is trivial.

In the previous results we only needed the description of all strictly positive polynomials.
The non-negative polynomials are described in the following result.
Again, we are not aware of a reference.

\begin{thm}[Sparse Algebraic Nichtnegativstellensatz on {$[a,b]$ with $0<a<b$}]\label{thm:algNichtNegab}
Let $n\in\nset$, $\alpha_0,\dots,\alpha_n\in\rset$ be real numbers with $\alpha_0 < \alpha_1 < \dots < \alpha_n$, and let $\cF = \{x^{\alpha_i}\}_{i=0}^n$. Let $f\in\lin\cF$ with $f\geq 0$ on $[a,b]$. Then there exist points $x_1,\dots,x_n,y_1,\dots,y_n\in [a,b]$ (not necessarily distinct) with $y_n=b$ which include the zeros of $f$ with multiplicities and there exist constants $c_*,c^*\in\rset$ such that
\[f = f_* + f^*\]
with $f_*,f^*\in\lin\cF$, $f_*,f^*\geq 0$ on $[a,b]$, and the polynomials $f_*$ and $f^*$ are given by
\[f_*(x) = c_*\cdot\det\left(\begin{array}{c|ccc}
f_0 & f_1 & \dots & f_n\\
x & x_1 & \dots & x_n
\end{array}\right)
\qquad\text{and}\qquad
f^*(x) = c_*\cdot\det\left(\begin{array}{c|ccc}
f_0 & f_1 & \dots & f_n\\
x & y_1 & \dots & y_n
\end{array}\right)\]
for all $x\in [a,b]$.

Removing the zeros of $f$ from $x_1,\dots,x_n,y_1,\dots,y_n$ we can assume that the remaining $x_i$ and $y_i$ are disjoint and when grouped by size the groups strictly interlace:
\[a\ \leq\ x_{i_1} = \dots = x_{i_k}\ <\ y_{j_1} = \dots = y_{j_l}\ <\ \dots\ < x_{i_p} = \dots = x_{i_q}\ <\ y_{j_r} = \dots = y_{j_s}=b.\]
Each such group in $(a,b)$ has an even number of members.
\end{thm}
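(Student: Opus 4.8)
The plan is to recognize $\cF$ as an ET-system on $[a,b]$ and then specialize the T-system Nichtnegativstellensatz \Cref{cor:nonnegabDecomp}, translating its zero-counting conclusions into the explicit confluent-determinant form of \Cref{rem:doublezeros}. First I would record that, since $0<a<b$ and $\alpha_0<\dots<\alpha_n$ are real, the family $\cF=\{x^{\alpha_i}\}_{i=0}^n$ is an ET-system of order $n$ on $[a,b]$: this is \Cref{exm:algETsystem} together with its observation that \Cref{exm:generalETsystem} permits arbitrary real exponents once $a>0$, because then there are no singularities and the relevant confluent determinants are nonzero (the necessity of $a>0$ is exactly the phenomenon in \Cref{exm:nonETalg}). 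Since $f$ is a nonzero element of $\lin\cF$, \Cref{dfn:etSystem} forces $f$ to have $r\leq n$ zeros counting multiplicity.

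Next, the main case $r<n$ is handled directly by \Cref{cor:nonnegabDecomp}, which yields the unique decomposition $f=f_*+f^*$ with $f_*,f^*\geq 0$ on $[a,b]$, each having exactly $n$ zeros counting multiplicity, these zeros strictly interlacing once the $r$ common zeros of $f$ are deleted, and $f^*(b)=0$. Writing $x_1\leq\dots\leq x_n$ for the zeros of $f_*$ and $y_1\leq\dots\leq y_n$ for those of $f^*$ (each list containing the zeros of $f$ with their multiplicities), the condition $f^*(b)=0$ gives $y_n=b$. Since $f_*$ and $f^*$ are polynomials in the ET-system $\cF$ vanishing to the prescribed orders, the confluent notation (\ref{eq:doublezeroDfn2}) of \Cref{rem:doublezeros} expresses each, up to a constant, as the stated determinant, and the ET-property guarantees these determinants are not the zero polynomial, so $c_*,c^*$ are well defined (with the sign forced by $f_*,f^*\geq 0$). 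The degenerate case $r=n$ is absorbed by allowing $c^*=0$: then $f$ is itself extremal, one sets $f_*=f$ with $x_1,\dots,x_n$ its zeros and $f^*=0$, and the interlacing assertion is vacuous.

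Finally I would establish the parity statement. Because $f_*,f^*\in C^n([a,b])$ are nonnegative, any zero in the open interval $(a,b)$ is a point where the function touches the axis without changing sign, hence is non-nodal and therefore of even algebraic multiplicity; the same holds for $f$. Deleting the (even-multiplicity) interior zeros of $f$ from the (even-multiplicity) interior zeros of $f_*$ and $f^*$ and grouping the remaining points by value, each interior group inherits an even count, while only the endpoints $a$ and $b$ can contribute an odd multiplicity. Combined with the strict interlacing from \Cref{cor:nonnegabDecomp}(ii) and $y_n=b$, this yields the displayed chain with even-sized interior groups.

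The step I expect to be the main obstacle is the multiplicity bookkeeping at shared and boundary zeros: one must verify that subtracting the even interior multiplicities of $f$ from those of $f_*$ and $f^*$ again leaves even-sized groups, and that the confluent determinants (\ref{eq:doublezeroDfn2}) remain nontrivial precisely because $\cF$ is an ET-system and not merely a T-system (which is why $a>0$ is indispensable, cf.\ \Cref{exm:nonETalg}).
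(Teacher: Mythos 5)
Your proposal is correct and follows essentially the same route as the paper, whose entire proof is to invoke \Cref{exm:algETsystem} to see that $\cF$ is an ET-system on $[a,b]$ (using $a>0$ to allow real exponents) and then to apply \Cref{cor:nonnegabDecomp} ``similar to the proof of \Cref{thm:algPosSatzab}'', with \Cref{rem:doublezeros} supplying the confluent determinantal representations. You are in fact more careful than the paper on one point: \Cref{cor:nonnegabDecomp} is stated only for $r<n$ zeros, and your explicit handling of the degenerate case $r=n$ (taking $f_*=f$, $c^*=0$) together with the even-multiplicity bookkeeping for interior zeros fills in details the paper's two-line proof passes over silently.
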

\begin{proof}
By \Cref{exm:algETsystem} we have that $\cF$ on $[a,b]$ is an ET-system. We then apply \Cref{cor:nonnegabDecomp} similar to the proof of \Cref{thm:algPosSatzab}.
\end{proof}

The signs of $c_*$ and $c^*$ are determined by $x_1$ and $y_1$ and their multiplicity.
If $x_1 = \dots = x_k < x_{k+1} \leq \dots \leq x_n$ then $\sign\, c_* = (-1)^k$.
The same holds for $c^*$ from the $y_i$.

\begin{cor}
If $\alpha_0 = 0$ in \Cref{thm:algNichtNegab} then \Cref{thm:algNichtNegab} also holds for $\alpha_0=0$.
\end{cor}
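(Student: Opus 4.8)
The plan is to mirror the continuity argument used for the Positivstellensatz in \Cref{cor:azero}. First I would observe that the hypothesis $\alpha_0=0$ makes $x^{\alpha_0}=1$ continuous and strictly positive on all of $[0,b]$, so by \Cref{exm:algPoly}(b) the family $\cF=\{x^{\alpha_i}\}_{i=0}^n$ is still a T-system on $[0,b]$. The obstruction is that $\cF$ need not be an \emph{ET}-system at the point $x=0$ (compare \Cref{exm:nonETalg}), so \Cref{cor:nonnegabDecomp} cannot be invoked directly on $[0,b]$; it applies only on intervals $[a',b]$ with $a'>0$, where \Cref{exm:algETsystem} guarantees the ET-system property.

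Accordingly, for small $a'>0$ I would apply \Cref{thm:algNichtNegab} on $[a',b]$ to the fixed polynomial $f\in\lin\cF$ (which is non-negative on $[a',b]\subset[0,b]$), obtaining a decomposition $f=f_*^{(a')}+f^{*(a')}$ together with nodes $x_i^{(a')},y_i^{(a')}\in[a',b]$ and constants $c_*^{(a')},c^{*(a')}$. Since all nodes lie in the compact interval $[0,b]$, I would extract a sequence $a'\to 0$ along which every node converges, say $x_i^{(a')}\to x_i$ and $y_i^{(a')}\to y_i$ in $[0,b]$. Because the determinantal representations in \Cref{thm:algNichtNegab} depend continuously on $a'$ and on the nodes (with coincident nodes interpreted via the derivative rows of \Cref{rem:doublezeros}), the polynomials $f_*^{(a')},f^{*(a')}\in\lin\cF$ converge to limits $f_*,f^*\in\lin\cF$ with $f=f_*+f^*$, and the inequalities $f_*,f^*\geq 0$ on $[0,b]$ persist in the limit.

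It then remains to verify two things. First, that the limiting representations are \emph{not} the zero polynomial: I would do this exactly as in \Cref{cor:azero}, by expanding the relevant determinant along the row that carries the endpoint node together with $x^{\alpha_0}=1$, and checking via the rescaling $\varepsilon^{-m}$ of \Cref{rem:doublezeros} that the limit is a nontrivial polynomial of the same type. Second, that the structural conclusions of \Cref{thm:algNichtNegab} survive: non-negativity, the interlacing of the grouped nodes, and the evenness of each interior group all pass to the limit by continuity, while a zero landing on the new boundary point $x=0$ simply acquires boundary index $1$, consistent with the allowed configurations. The signs of $c_*$ and $c^*$ are read off from the multiplicities of $x_1$ and $y_1$ as in the remark following \Cref{thm:algNichtNegab}.

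The hard part is the first verification in the last step: because $\cF$ degenerates from an ET-system to a mere T-system at $x=0$, nodes that collapse onto $0$ as $a'\to 0$ could a priori cause the determinant to degenerate or lose a zero. Controlling this degeneration---showing through the explicit determinant expansion that the limiting polynomial retains the correct number of zeros (now counted with the boundary point $0$) and does not vanish identically---is the delicate step, and it is precisely here that the computation of \Cref{cor:azero} must be adapted.
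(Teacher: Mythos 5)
The paper states this corollary \emph{without any proof}, so the only benchmark is the argument it gives for the companion statement \Cref{cor:azero}, which the text clearly intends to be reused here: fix $a=0$ directly, observe that the determinantal representations of \Cref{thm:algNichtNegab} depend continuously on the nodes, and verify non-triviality by expanding the determinant along the single row produced by a node at $0$ (which, since $x^{\alpha_0}=1$, is $(1,0,\dots,0)$), thereby reducing to the ET-system $\{x^{\alpha_i}\}_{i=1}^n$ on strictly positive nodes. Your proposal is in this spirit and correctly identifies both the obstruction (loss of the ET-property at $x=0$, \Cref{exm:nonETalg}) and the key computation (the row expansion of \Cref{cor:azero}), but the detour through decompositions on $[a',b]$ with $a'\to 0$ creates obligations you do not discharge: you extract convergent subsequences of the \emph{nodes}, but never show that the normalizing constants $c_*^{(a')},c^{*(a')}$ (equivalently, the coefficient vectors of $f_*^{(a')}$) stay bounded, which is what is actually needed for $f_*^{(a')}\to f_*$ in $\lin\cF$. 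This is fixable --- $0\le f_*^{(a')}\le f$ on the fixed interval $[b/2,b]$, and boundedness of a T-system polynomial at $n+1$ fixed points bounds its coefficients via \Cref{lem:determinant} --- but as written it is a gap, and the direct ``set $a=0$'' route of \Cref{cor:azero} avoids it entirely.

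The second issue is one you name but do not resolve, and it is the actual mathematical content of the corollary: if several nodes collapse onto $0$ (in particular if $f$ itself vanishes at $0$ to order at least two, e.g.\ $f=x^2$ in $\lin\{1,x^2,x^3\}$ on $[0,b]$), the representation requires derivative rows at $0$, and these can make the determinant vanish identically, exactly as in \Cref{exm:nonETalg}. Your closing paragraph correctly flags this as ``the delicate step'' but offers no argument for why the limiting polynomial is nontrivial or retains the stated zero structure in that case; the row-expansion trick of \Cref{cor:azero} handles only a \emph{single} (index-one) node at the endpoint. Since the zero set of a non-negative $f$ is prescribed data in \Cref{thm:algNichtNegab} (unlike in the Positivstellensatz, where all interior zeros are double and interior), this degenerate configuration cannot simply be waved away, and a complete proof must either exclude it explicitly or treat it separately.
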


\begin{exm}
Let $\alpha\in (0,\infty)$ and let $\cF = \{1,x^{\alpha}\}$ on $[0,1]$. Then we have $1 = 1_* + 1^*$ with $1_* = x^\alpha$ and $1^* = 1 - x^\alpha$.\exmsymbol
\end{exm}

\subsection{Sparse Positivstellensätze and Nichtnegativstellensätze on $[0,\infty)$}

In \Cref{sec:posabGeneral} we have seen the general Positivstellen- and Nichtnegativitätsstellensätze for T-systems and then applied these to the algebraic cases on $[a,b]$.
We now show how the results from \Cref{sec:posabGeneral} on $[a,b]$ can be transferred to $[0,\infty)$.

\begin{thm}[{\cite{karlin63}} or e.g.\ {\cite[p.\ 168, Thm.\ 8.1]{karlinStuddenTSystemsBook}}]\label{thm:karlin0infty}
Let $n\in\nset$ and $\cF = \{f_i\}_{i=0}^n$ be a continuous T-system of order $n$ on $[0,\infty)$ such that
\begin{enumerate}[(a)]
\item there exists a $C>0$ such that $f_n(x)>0$ for all $x\geq C$,

\item $\displaystyle\lim_{x\to\infty} \frac{f_i(x)}{f_n(x)} = 0$ for all $i=0,\dots, n-1$, and

\item $\{f_i\}_{i=0}^{n-1}$ is a continuous T-system on $[0,\infty)$.
\end{enumerate}
Then for any $f = \sum_{i=0}^n a_i f_i\in\lin\cF$ with $f>0$ on $[0,\infty)$ and $a_n>0$ there exists a unique representation
\[f = f_* + f^*\]
with $f_*,f^*\in\lin\cF$ with $f_*,f^*\geq 0$ on $[0,\infty)$ such that the following hold:
\begin{enumerate}[(i)]
\item If $n = 2m$ the polynomials $f_*$ and $f^*$ each possess $m$ distinct zeros $\{x_i\}_{i=1}^m$ and $\{y_i\}_{i=0}^{m-1}$ satisfying
\[0 = y_0 < x_1 < y_1 < \dots < y_{m-1} < x_m <\infty.\]
All zeros except $y_0$ are double zeros.

\item If $n = 2m+1$ the polynomials $f_*$ and $f^*$ each possess the zeros $\{x_i\}_{i=1}^{m+1}$ and $\{y_i\}_{i=1}^m$ satisfying
\[0 = x_1 < y_1 < x_2 < \dots < y_m < x_{m+1} < \infty.\]
All zeros except $x_1$ are double zeros.

\item The coefficient of $f_n$ in $f_*$ is equal to $a_n$.
\end{enumerate}
\end{thm}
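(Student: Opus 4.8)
The plan is to deduce the half-line decomposition from the compact one already furnished by \Cref{cor:posabDecomp}, by letting the right endpoint tend to infinity and using the growth hypotheses (a)--(c) to show that a single zero of the endpoint piece escapes to $\infty$ while all other zeros stay bounded and converge. First I would fix $b>C$ and apply \Cref{cor:posabDecomp} to $f$ on $[0,b]$, where $\cF$ remains a continuous T-system of order $n$ by \Cref{cor:restriction}. This gives $f = f_*^{(b)} + f^{*(b)}$ with $f_*^{(b)},f^{*(b)}\in\lin\cF$ non-negative on $[0,b]$, interlacing zero sets each of index $n$, and $f^{*(b)}(b)=0$.

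By the index bookkeeping of \Cref{thm:zeros3} the zero configuration on $[0,b]$ is forced: for $n=2m$ the piece $f^{*(b)}$ carries the two endpoints $0$ and $b$ (each of index $1$) together with $m-1$ interior non-nodal zeros, whereas $f_*^{(b)}$ carries $m$ interior non-nodal zeros; for $n=2m+1$ the endpoint $0$ is instead a simple zero of $f_*^{(b)}$, while $f^{*(b)}$ keeps the endpoint $b$ and $m$ interior non-nodal zeros. In either case the endpoint $b$ is a zero of $f^{*(b)}$, and it is exactly this zero that I expect to escape as $b\to\infty$. When $\cF\subset C^1$ the determinantal bookkeeping of \Cref{rem:doublezeros} turns these non-nodal zeros into genuine double zeros and lets me write both pieces explicitly through their nodes.

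The main obstacle is the passage to the limit $b\to\infty$: I must show that the finite zeros neither collide nor escape, and that precisely the node at $b$ runs off to infinity. The decisive input is hypotheses (a) and (b): since $f>0$ and $a_n>0$, dividing by $f_n$ gives $f(x)/f_n(x)\to a_n>0$, so $f$ is asymptotically comparable to $a_n f_n$ and cannot become small at infinity; this confines the finite zeros to a fixed compact set. Normalising the coefficient vectors of $f_*^{(b)}$ and $f^{*(b)}$ on the unit sphere and invoking compactness together with \Cref{lem:determinant}, I would extract a convergent subsequence and obtain limits $f_*,f^*\in\lin\cF$ with $f_*+f^*=f$ and $f_*,f^*\geq 0$ on all of $[0,\infty)$. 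Hypothesis (c) then controls the limit of the endpoint piece: after the node at $b$ is lost, $f^*$ has its coefficient of $f_n$ equal to $0$ and hence lies in $\lin\{f_0,\dots,f_{n-1}\}$, a T-system of order $n-1$, which by definition can carry at most $n-1$ zeros, exactly consistent with the one lost node. This simultaneously pins the coefficient of $f_n$ in $f_*$ to be $a_n$, which is assertion (iii).

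It remains to read off the zero chains and to prove uniqueness. The surviving zeros of the two pieces inherit the strict interlacing of \Cref{cor:posabDecomp}: for $n=2m$ the node at $0$ together with the $m-1$ interior non-nodal zeros of $f^{*(b)}$ passes to $0=y_0<y_1<\dots<y_{m-1}$, the $m$ interior non-nodal zeros of $f_*^{(b)}$ pass to $x_1<\dots<x_m$, and interlacing yields $0=y_0<x_1<y_1<\dots<y_{m-1}<x_m<\infty$; the case $n=2m+1$ gives the symmetric chain with $x_1=0$. For uniqueness I would use the uniqueness already built into the compact case: the pieces $f_*^{(b)},f^{*(b)}$ are uniquely determined by \Cref{cor:posabDecomp} (ultimately by the uniqueness in \Cref{thm:karlinab}), and the confinement of the finite zeros shows that the whole family $f_*^{(b)}$ converges as $b\to\infty$, not merely along a subsequence, so the limit $f_*$ is unique. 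Finally, any decomposition of the asserted form is pinned down by its data, since (iii) forces $f^*\in\lin\{f_0,\dots,f_{n-1}\}$, and by hypothesis (c) together with \Cref{thm:zeros2} a non-negative element of this order-$(n-1)$ system whose zero set has index $n-1$ is extremal and hence determined; thus $f^*$, and with it $f_*=f-f^*$, is the limit just constructed.
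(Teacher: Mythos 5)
Your route differs from the paper's: you exhaust $[0,\infty)$ by intervals $[0,b]$, apply \Cref{cor:posabDecomp} on each, and try to pass to the limit $b\to\infty$, whereas the paper compactifies once and for all --- it chooses a weight $w$ with $f_n/w\to 1$, extends $v_i=f_i/w$ continuously to $[0,\infty]$ by $v_i(\infty)=\delta_{i,n}$ (this is where (a)--(c) enter, via \Cref{exm:scaling}), pulls back to $[0,1]$ by $t(x)=\tan(\pi x/2)$, and applies \Cref{cor:posabDecomp} exactly once. The compactification buys you existence, the zero pattern, assertion (iii) (since $g_i(1)=0$ for $i<n$ and $g^*(1)=0$ kill the $f_n$-component of $f^*$), and uniqueness in one stroke, because the correspondence $f\mapsto (f/w)\circ t$ is a linear bijection preserving non-negativity and zeros. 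Your exhaustion scheme is a legitimate alternative in principle, but as written it has genuine gaps at exactly the points where the limit process needs control.

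Concretely: first, the claim that the interior zeros of $f_*^{(b)}$ and $f^{*(b)}$ stay in a compact set independent of $b$ is asserted, not proved; the observation that $f/f_n\to a_n>0$ says something about $f$, which has no zeros, and does not by itself prevent, say, $x_m^{(b)}$ or $y_{m-1}^{(b)}$ from drifting to infinity with $b$, in which case the limit polynomials would have too few zeros and the stated configuration would fail. Second, even granting confinement, you infer convergence of the \emph{whole} family $f_*^{(b)}$ (not just of a subsequence) from confinement of zeros; that implication does not hold without an additional monotonicity or uniqueness-of-limit argument. Third, your uniqueness argument is circular: you appeal to \Cref{thm:zeros2} to say that a non-negative element of the order-$(n-1)$ system with zero set of index $n-1$ is ``extremal and hence determined,'' but \Cref{thm:zeros2} is an existence statement about admissible zero sets, not a uniqueness statement, and in any case the zero configuration of a competing decomposition is not given in advance --- uniqueness must range over all admissible configurations, not just the one your limit produces. (A determinantal identification as in \Cref{thm:detRepr} also does not apply directly, since the index-$(n-1)$ zero set consists of fewer than $n-1$ distinct points.) Each of these can probably be repaired, but the repairs are the substance of the proof; the paper's change of variables avoids all three issues simultaneously.
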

\begin{proof}
By (a) there exists a function $w\in C([0,\infty))$ such that $w>0$ on $[0,\infty)$ and $\lim_{x\to\infty} \frac{f_n(x)}{w(x)} = 1$.
By (b) we define
\[v_i(x) := \begin{cases}
\frac{f_i(x)}{w(x)} & \text{if}\ x\in [0,\infty),\\
\delta_{i,n} & \text{if}\ x=\infty
\end{cases}\]
for all $i=0,1,\dots,n$.
Then by (c) $\{v_i\}_{i=0}^n$ is a T-system on $[0,\infty]$ by \Cref{exm:scaling}.
With $t(x) := \tan (\pi x/2)$ we define
$g_i(x) := v_i\circ t$
for all $i=0,1,\dots,n$.
Hence, $\cG = \{g_i\}_{i=0}^n$ is a T-system on $[0,1]$.
We now apply \Cref{cor:posabDecomp} to $\cG$. Set $g := (\frac{f}{w})\circ t$.

\underline{(i):} Let $n = 2m$. Then by \Cref{cor:posabDecomp} there exits points
\[0=y_0 < x_1 < y_1 < \dots < x_m < y_m = 1\]
and unique functions $g_*$ and $g^*$ such that $g = g_* + g^*$, $g_*,g^*\geq 0$ on $[0,1]$, $x_1,\dots,x_m$ are the zeros of $g_*$, and $y_0,\dots,y_m$ are the zeros of $g^*$.
Then $f_* := (g_*\circ t^{-1})\cdot w$ and $f^* := (g^*\circ t^{-1})\cdot w$ are the unique components in the decomposition $f = f_* + f^*$.

\underline{(ii):} Similar to (i).

\underline{(iii):} From (i) (and (ii) in a similar way) we have $g_i(1) = 0$ for $i=0,\dots,n-1$ and $g_n(1) = 1$.
Hence, we get with $g^*(y_m=1) = 0$ that $g_n$ is not contained in $g^*$, i.e., $g_*$ has the only $g_n$ contribution because $\cG$ is linearly independent.
This is inherited by $f_*$ and $f^*$ which proves (iii).
\end{proof}

If $\cF$ is an ET-system then the $f_*$ and $f^*$ can be written down explicitly.

\begin{cor}\label{cor:posrepr0infty}
If in \Cref{thm:karlin0infty} we have additionally that $\cF$ is an ET-system on $[0,\infty)$ then the unique $f_*$ and $f^*$ are given
\begin{enumerate}[(i)]
\item for $n= 2m$ by
\[ f_*(x) = a_{2m}\cdot \det \begin{pmatrix}
f_0 & f_1 & f_2 & \dots & f_{2m-1} & f_{2m}\\
x & (x_1 & x_1) & \dots & (x_m & x_m)
\end{pmatrix}\]
and
\[f^*(x) = -c\cdot \det \begin{pmatrix}
f_0 & f_1 & f_2 & f_3 & \dots & f_{2m-2} & f_{2m-1}\\
x & y_0 & (y_1 & y_1) & \dots & (y_{m-1} & y_{m-1})
\end{pmatrix},\]

\item and for $n= 2m+1$ by
\[f_*(x) = -a_{2m+1}\cdot \det \begin{pmatrix}
f_0 & f_1 & f_2 & f_3 & \dots & f_{2m} & f_{2m+1}\\
x & x_1 & (x_2 & x_2) & \dots & (x_{m+1} & x_{m+1})
\end{pmatrix}\]
and
\[f^*(x) = c\cdot \det \begin{pmatrix}
f_0 & f_1 & f_2 & \dots & f_{2m-1} & f_{2m}\\
x & (y_1 & y_1) & \dots & (y_m & y_m)
\end{pmatrix}\]
\end{enumerate}
for some $c > 0$.
\end{cor}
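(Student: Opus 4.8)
The plan is to transcribe the structural information already supplied by \Cref{thm:karlin0infty} into the determinantal calculus for ET-systems developed in \Cref{rem:doublezeros}. First I would invoke \Cref{thm:karlin0infty} itself: it yields the existence and uniqueness of the decomposition $f = f_* + f^*$ with $f_*,f^*\geq 0$, together with the complete list of zeros and their multiplicities. In the case $n=2m$ the polynomial $f_*$ has exactly the $m$ double zeros $x_1,\dots,x_m$, while $f^*$ has the simple zero $y_0=0$ and the $m-1$ double zeros $y_1,\dots,y_{m-1}$; in the case $n=2m+1$ the roles are reversed, $f_*$ carrying the simple zero $x_1=0$ and the double zeros $x_2,\dots,x_{m+1}$, and $f^*$ the $m$ double zeros $y_1,\dots,y_m$. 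Since $\cF$ is an ET-system, a nonzero element of $\lin\cF$ has at most $n$ zeros counting multiplicities, so any polynomial attaining this maximum is determined up to a scalar.

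For $f_*$ this already finishes the identification: in both parities $f_*$ has exactly $n$ zeros counting multiplicities, so by the multiplicity version of \Cref{thm:detRepr} recorded in \Cref{rem:doublezeros} (construction (\ref{eq:doublezeroDfn2})) it is a scalar multiple of the displayed $(n+1)\times(n+1)$ determinant with those zeros inserted. For $f^*$ I would first use \Cref{thm:karlin0infty}(iii): the coefficient of $f_n$ in $f_*$ equals $a_n$, hence---since $f$ itself has $f_n$-coefficient $a_n$---the coefficient of $f_n$ in $f^*=f-f_*$ vanishes. Thus $f^*\in\lin\{f_i\}_{i=0}^{n-1}$, which is a T-system of order $n-1$ by hypothesis (c) of \Cref{thm:karlin0infty} and, in the ET-setting, an ET-system. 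Counting, $f^*$ has $n-1$ zeros counting multiplicities in this smaller system, the maximal number, so it is proportional to the $n\times n$ determinant built from $f_0,\dots,f_{n-1}$; equivalently, the $n-1$ zero conditions together with the single condition ``$f_n$-coefficient $=0$'' cut out a one-dimensional subspace of $\lin\cF$ spanned by that determinant. This explains why the $f^*$-determinant omits the column $f_n$.

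It remains to pin down the constants. Expanding the $f_*$-determinant along its first row exhibits $f_*$ as an element of $\lin\cF$ whose $f_n$-coefficient is the cofactor obtained by deleting the first row and the $f_n$-column; matching this cofactor against the value $a_n$ from \Cref{thm:karlin0infty}(iii) fixes the leading scalar, which is $a_{2m}$ when $n=2m$ and $-a_{2m+1}$ when $n=2m+1$, the minus sign appearing because the boundary zero $x_1=0$ enters as a single row rather than a double block. The remaining constant $c>0$ for $f^*$ is then determined by $f^*=f-f_*$, and both its positivity and the displayed signs follow from the non-negativity $f^*\geq 0$ of \Cref{thm:karlin0infty} together with the alternating sign of the determinant between consecutive zeros, exactly as in the sign analysis in the proof of \Cref{thm:zeros2}.

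The step I expect to be most delicate is this final constant-and-sign bookkeeping. One must track how the single boundary zero at $0$ contributes one row instead of two, how this flips the parity between the $n=2m$ and $n=2m+1$ cases, and how the sign of the first-row cofactor interacts with the requirement $f_*,f^*\geq 0$; a secondary point is to confirm that the $n\times n$ determinant for $f^*$ is not identically zero, which is where the ET-property of $\{f_i\}_{i=0}^{n-1}$ (and, in the intended algebraic applications, \Cref{exm:algETsystem}) is genuinely used. Everything else is a direct translation of \Cref{thm:karlin0infty} into the determinantal language of \Cref{rem:doublezeros}.
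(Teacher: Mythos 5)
Your proposal is correct and takes essentially the same route as the paper, whose entire proof of this corollary is the single line ``Combine \Cref{thm:karlin0infty} with \Cref{rem:doublezeros}'': you simply carry out that combination in detail, including the necessary (and in the paper implicit) observation that part (iii) of \Cref{thm:karlin0infty} forces the $f_n$-coefficient of $f^*=f-f_*$ to vanish, so that $f^*$ lies in $\lin\{f_i\}_{i=0}^{n-1}$ and its determinant may omit the column $f_n$. The only small imprecision is in the leading constant for $f_*$: matching the $f_n$-cofactor against $a_n$ actually yields $a_n$ divided by that (positive) cofactor rather than $a_n$ itself, but this normalization issue is already present in the statement as given and does not affect the argument.
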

\begin{proof}
Combine \Cref{thm:karlin0infty} with \Cref{rem:doublezeros}.
\end{proof}

If we now plug \Cref{exm:algPoly} into \Cref{thm:karlin0infty} we get the following.

\begin{thm}[Sparse Algebraic Positivstellensatz on {$[0,\infty)$}]\label{thm:algPosSatz0infty}
Let $n\in\nset$, $\alpha_0,\dots,\alpha_n\in [0,\infty)$ be real numbers with $\alpha_0 = 0 < \alpha_1 < \dots < \alpha_n$, and let $\cF = \{x^{\alpha_i}\}_{i=0}^n$ on $[0,\infty)$.
Then for any $f = \sum_{i=0}^n a_i f_i\in\lin\cF$ with $f>0$ on $[0,\infty)$ and $a_n>0$ there exists a unique decomposition
\[f = f_* + f^*\]
with $f_*,f^*\in\lin\cF$ and $f_*,f^*\geq 0$ on $[0,\infty)$ such that the following hold:
\begin{enumerate}[(i)]
\item If $n = 2m$ then the polynomials $f_*$ and $f^*$ each possess $m$ distinct zeros $\{x_i\}_{i=1}^m$ and $\{y_i\}_{i=0}^{m-1}$ satisfying
\[0 = y_0 < x_1 < y_1 < \dots < y_{m-1} < x_m < \infty.\]
The polynomials $f_*$ and $f^*$ are given by
\[f_*(x) = a_{2m}\cdot\det\begin{pmatrix}
1 & x^{\alpha_1} & x^{\alpha_2} & \dots & x^{\alpha_{2m-1}} & x^{\alpha_{2m}}\\
x & (x_1 & x_1) & \dots & (x_m & x_m)
\end{pmatrix}\]
and
\begin{align*}
f^*(x) 
%
&= c\cdot \det\begin{pmatrix}
x^{\alpha_1} & x^{\alpha_2} & x^{\alpha_3} &\dots& x^{\alpha_{2m-2}} & x^{\alpha_{2m-1}}\\
x & (y_1 & y_1) & \dots & (y_{m-1} & y_{m-1})
\end{pmatrix}
\end{align*}
for some $c>0$.

\item If $n=2m+1$ then $f_*$ and $f^*$ have zeros $\{x_i\}_{i=1}^{m+1}$ and $\{y_i\}_{i=1}^m$ respectively which satisfy
\[0 = x_1 < y_1 < x_2 < \dots < y_m < x_{m+1}<\infty.\]
The polynomials $f_*$ and $f^*$ are given by
\begin{align*}
f_*(x) 
&= a_{2m+1}\cdot \det\begin{pmatrix}
x^{\alpha_1} & x^{\alpha_2} & x^{\alpha_3} & \dots & x^{\alpha_{2m}} & x^{\alpha_{2m+1}}\\
x & (x_2 & x_2) & \dots & (x_{m+1} & x_{m+1})
\end{pmatrix}
\end{align*}
and
\[f^*(x) = c\cdot\det\begin{pmatrix}
1 & x^{\alpha_1} & x^{\alpha_2} & \dots & x^{\alpha_{2m-1}} & x^{\alpha_{2m}}\\
x & (y_1 & y_1) & \dots & (y_m & y_m)
\end{pmatrix}\]
for some $c>0$.
\end{enumerate}
\end{thm}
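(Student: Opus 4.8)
The plan is to specialize \Cref{thm:karlin0infty} to the algebraic family $\cF=\{x^{\alpha_i}\}_{i=0}^n$, exactly as the sentence preceding the statement announces. First I would record that $\cF$ is a continuous T-system of order $n$ on $[0,\infty)$ by \Cref{exm:algPoly}(b) (using $\alpha_0=0<\alpha_1<\dots<\alpha_n$), and then verify the three extra hypotheses of \Cref{thm:karlin0infty}. Hypothesis (a) holds with any $C>0$ since $f_n(x)=x^{\alpha_n}>0$ for all $x>0$; hypothesis (b) holds because $f_i(x)/f_n(x)=x^{\alpha_i-\alpha_n}\to 0$ as $x\to\infty$ for every $i<n$, using $\alpha_i<\alpha_n$; and hypothesis (c) is again \Cref{exm:algPoly}(b) applied to $0=\alpha_0<\dots<\alpha_{n-1}$, which makes $\{x^{\alpha_i}\}_{i=0}^{n-1}$ a continuous T-system of order $n-1$ on $[0,\infty)$. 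With these facts \Cref{thm:karlin0infty} immediately yields the unique decomposition $f=f_*+f^*$ with $f_*,f^*\geq 0$, the location and interlacing of the zeros in cases (i) and (ii), and the fact that the coefficient of $f_n=x^{\alpha_n}$ in $f_*$ equals $a_n$; this last point is what supplies the leading constants $a_{2m}$ and $a_{2m+1}$ in the explicit formulas.

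It then remains to convert the abstract $f_*$ and $f^*$ into the stated determinants. By \Cref{thm:karlin0infty} every interior zero is a double zero lying in $(0,\infty)$, and there is at most one boundary zero, at the origin, which is simple. On any interval $[a',\infty)$ with $a'>0$ the family $\cF$ is an ET-system by \Cref{exm:algETsystem}, so \Cref{rem:doublezeros} lets me encode each interior double zero $x_i$ (respectively $y_i$) by the value-and-derivative block $(x_i\ x_i)$, producing precisely the determinantal expressions of \Cref{cor:posrepr0infty}. The single simple zero at the origin is handled separately: for $n=2m$ it sits in $f^*$, and for $n=2m+1$ in $f_*$; rather than inserting a row for it, I would drop the constant function $x^{\alpha_0}=1$ from the determinant, so that every remaining basis function $x^{\alpha_i}$ with $i\geq 1$ vanishes automatically at $x=0$. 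This is the same reduction carried out in the proof of \Cref{cor:azero}, and it yields the smaller determinants displayed in the statement.

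The one point that needs genuine care — and the main obstacle — is the origin. Because $\alpha_0=0$, the family $\cF$ need not be an ET-system on the closed half-line $[0,\infty)$; \Cref{exm:nonETalg} shows how a spurious multiplicity can accumulate at $x=0$, so \Cref{cor:posrepr0infty} cannot be quoted verbatim for $[0,\infty)$. The resolution is that \Cref{thm:karlin0infty} guarantees the unique zero at $0$ to be simple, so no derivative (double-zero) row is ever evaluated at the origin; all such rows sit at interior points, where the ET-property genuinely holds. To certify that the displayed determinant is not the zero polynomial, I would start from the full determinant of \Cref{cor:posrepr0infty} (still carrying the constant column and the origin row) and develop it along the origin row, reducing it to a determinant over the interior T-system $\{x^{\alpha_i}\}_{i\geq 1}$ on $(0,\infty)$, exactly as in the computation of \Cref{cor:azero}. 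Nondegeneracy of that reduced determinant shows the displayed expression is a nonzero element of $\lin\cF$ with the prescribed zeros and multiplicities.

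Finally, uniqueness closes the argument: both $f_*$ and the candidate determinant lie in $\lin\cF$ and share the same zero set with the same multiplicities (index $n$), and similarly $f^*$ and its candidate lie in the lower-order system $\lin\{f_0,\dots,f_{n-1}\}$ with matching zeros, so by the dimension count each candidate equals $f_*$ (respectively $f^*$) up to a nonzero scalar. Comparing the $f_n$-coefficient with \Cref{thm:karlin0infty}(iii) pins the constant of $f_*$ to $a_{2m}$ or $a_{2m+1}$, and the requirement $f^*\geq 0$ near its extreme zero fixes the sign of the remaining constant $c>0$, which completes both cases (i) and (ii).
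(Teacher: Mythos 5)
Your proposal is correct and follows essentially the same route as the paper's own proof: specialize \Cref{thm:karlin0infty} to $\cF=\{x^{\alpha_i}\}_{i=0}^n$, read off the determinantal formulas from \Cref{cor:posrepr0infty} on an interval $[a',\infty)$ with $a'>0$ where $\cF$ is an ET-system, and treat the simple zero at the origin by expanding the determinant along the row $(1,0,\dots,0)$ so that the constant column drops out — exactly the $g_\varepsilon$-computation the paper performs. You even flag the same subtlety (failure of the ET-property at $x=0$, cf.\ \Cref{exm:nonETalg}) and resolve it the same way, adding only a slightly more explicit uniqueness/normalization argument at the end.
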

\begin{proof}
We have that $\cF$ clearly fulfill condition (a) and (b) of \Cref{thm:karlin0infty} and by \Cref{exm:algPoly} we known that $\cF$ on $[0,\infty)$ is also a T-system, i.e., (c) in \Cref{thm:karlin0infty} is fulfilled.
We can therefore apply \Cref{thm:karlin0infty}.

\underline{(i) $n=2m$:} By \Cref{thm:karlin0infty}(i) the unique $f_*$ and $f^*$ each possess $m$ distinct zeros $\{x_i\}_{i=1}^m$ and $\{y_i\}_{i=0}^{m-1}$ with $0\leq y_0 < x_1 < \dots < y_{m-1} < x_m < \infty$.
Since $x_1,\dots,x_m\in (0,\infty)$ and $\cF$ on $[x_1/2,\infty)$ is an ET-system we immediately get the determinantal representation of $f_*$ by \Cref{cor:posrepr0infty} (combine \Cref{thm:karlin0infty} with \Cref{rem:doublezeros}).
For $f^*$ we have $y_0=0$ and by \Cref{exm:nonETalg} this is no ET-system.
Hence, we prove the representation of $f^*$ by hand.

Let $\varepsilon>0$ be such that $0=y_0 < y_1 < y_1+\varepsilon < \dots < y_{m-1} < y_{m-1}+\varepsilon$ holds. Then
\begin{align*}
g_\varepsilon(x) &= -\varepsilon^{-m+1}\cdot \det \begin{pmatrix}
1 & x^{\alpha_1} & x^{\alpha_2} & x^{\alpha_3} & \dots & x^{\alpha_{2m-2}} & x^{\alpha_{2m-1}}\\
x & 0 & y_1 & y_1+\varepsilon & \dots & y_{m-1} & y_{m-1}+\varepsilon
\end{pmatrix}\\
&= -\varepsilon^{-m+1}\cdot \det\begin{pmatrix}
1 & x^{\alpha_1} & x^{\alpha_2} & \dots & x^{\alpha_{2m-1}}\\
1 & 0 & 0 & \dots & 0\\
1 & y_1^{\alpha_1} & y_1^{\alpha_2} & \dots & y_1^{\alpha_{2m-1}}\\
\vdots & \vdots & \vdots & & \vdots\\
1 & (y_{m-1}+\varepsilon)^{\alpha_1} & (y_{m-1}+\varepsilon)^{\alpha_2} & \dots & (y_{m-1}+\varepsilon)^{\alpha_{2m-1}}
\end{pmatrix}
\intertext{expand by the second row}
&= \varepsilon^{-m+1}\cdot\det\begin{pmatrix}
x^{\alpha_1} & x^{\alpha_2} & \dots & x^{\alpha_{2m-1}}\\
y_1^{\alpha_1} & y_1^{\alpha_2} & \dots & y_1^{\alpha_{2m-1}}\\
\vdots & \vdots & & \vdots\\
(y_{m-1}+\varepsilon)^{\alpha_1} & (y_{m-1}+\varepsilon)^{\alpha_2} & \dots & (y_{m-1}+\varepsilon)^{\alpha_{2m-1}}
\end{pmatrix}\\
&= \varepsilon^{-m+1}\cdot\det\begin{pmatrix}
x^{\alpha_1} & x^{\alpha_2} & \dots & x^{\alpha_{2m-2}} & x^{\alpha_{2m-1}}\\
x & y_1 & y_1+\varepsilon & \dots & y_{m-1} & y_{m-1}+\varepsilon
\end{pmatrix}
\end{align*}
is non-negative on $[0,y_1]$ and every $[y_i+\varepsilon,y_{i+1}]$.
Now $y_0=0$ is removed and all $y_i,y_i+\varepsilon>0$.
Hence, we can work on $[y_1/2,\infty)$ where $\{x^{\alpha_i}\}_{i=1}^{2m}$ is an ET-system and we can go to the limit $\varepsilon\to 0$ as in \Cref{rem:doublezeros}.
Then \Cref{cor:posrepr0infty} proves the representation of $f^*$.

\underline{(ii) $n=2m+1$:} Similar to the case (i) with $n=2m$.
\end{proof}

The previous result was reproved in \cite{scheider23}.
Additionally, since the authors of \cite{scheider23} were not aware of \cite{karlin63,karlinStuddenTSystemsBook} their statement is much weaker and the proof is unnecessary long and complicated.
In \cite{scheider23} several other results are reproved which already appeared in \cite{karlinStuddenTSystemsBook}.

It is left to the reader to use \Cref{cor:nonnegabDecomp} to gain the corresponding sparse Nichtnegativestellensatz on $[0,\infty)$ for general T-systems and for $\{x^{\alpha_i}\}_{i=0}^n$ with $0= \alpha_0 < \alpha_1 < \dots < \alpha_n$ real numbers.
The proofs follow the same line of thoughts as the proof of \Cref{thm:algNichtNegab}.
If all $\alpha_i\in\nset_0$ then we can express the $f_*$ and $f^*$ in \Cref{thm:algPosSatz0infty} also with Schur polynomials, see (\ref{eq:schurPolyRepr}) in \Cref{exm:algETsystem}.

We have seen that Boas already investigated the sparse Stieltjes moment problem \cite{boas39}.
However, since Boas did not have access to \Cref{thm:karlin0infty} by Karlin and therefore \Cref{thm:algPosSatz0infty} the description was complicated and incomplete.
We get the following complete and simple description.
To our knowledge this result did not appear somewhere else.

\begin{thm}[Sparse Stieltjes Moment Problem]\label{thm:sparseStieltjesMP}
Let $\{\alpha_i\}_{i\in\nset_0}\subset [0,\infty)$ such that $\alpha_0 = 0 < \alpha_1 < \alpha_2 < \dots$ and let $\cF = \{x^{\alpha_i}\}_{i\in\nset_0}$.
Then the following are equivalent:
\begin{enumerate}[(i)]
\item $L:\lin\cF\to\rset$ is a $[0,\infty)$-moment functional.

\item $L(p)\geq 0$ for all $p\in\lin\cF$ with $p\geq 0$.

\item $L(p)\geq 0$ for all $p\in\lin\cF$ with $p>0$.

\item $L(p)\geq 0$ for all
\[p(x) = \begin{cases}
\det\begin{pmatrix}
1 & x^{\alpha_1} & x^{\alpha_2} & \dots & x^{\alpha_{2m-1}} & x^{\alpha_{2m}}\\
x & (x_1 & x_1) & \dots & (x_m & x_m)
\end{pmatrix},\\
\det\begin{pmatrix}
x^{\alpha_1} & x^{\alpha_2} & x^{\alpha_3} &\dots& x^{\alpha_{2m-2}} & x^{\alpha_{2m-1}}\\
x & (x_1 & x_1) & \dots & (x_{m-1} & x_{m-1})
\end{pmatrix},\\
\det\begin{pmatrix}
x^{\alpha_1} & x^{\alpha_2} & x^{\alpha_3} & \dots & x^{\alpha_{2m}} & x^{\alpha_{2m+1}}\\
x & (x_2 & x_2) & \dots & (x_{m+1} & x_{m+1})
\end{pmatrix},\ \text{and}\\
\det\begin{pmatrix}
1 & x^{\alpha_1} & x^{\alpha_2} & \dots & x^{\alpha_{2m-1}} & x^{\alpha_{2m}}\\
x & (x_1 & x_1) & \dots & (x_m & x_m)
\end{pmatrix}
\end{cases}\]
for all $m\in\nset_0$ and $0<x_1<\dots<x_m$.
\end{enumerate}
\end{thm}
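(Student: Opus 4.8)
The plan is to imitate the structure of the Hausdorff results \Cref{thm:sparseHausd} and \Cref{thm:generalSparseHausd}: prove the cheap implications around a cycle and isolate the one substantial step, $(ii)\Rightarrow(i)$, which I would hand to Choquet's adapted-space machinery. Concretely I would run the cycle $(i)\Rightarrow(ii)\Rightarrow(iv)\Rightarrow(iii)\Rightarrow(ii)\Rightarrow(i)$. The implication $(i)\Rightarrow(ii)$ is immediate: if $\mu$ represents $L$ then $L(p)=\int_0^\infty p\,\diff\mu\geq 0$ for every $p\geq 0$ on $[0,\infty)$. The implication $(ii)\Rightarrow(iv)$ is trivial, since each of the four determinantal polynomials listed in $(iv)$ is nonnegative on $[0,\infty)$ and therefore lies in the cone on which $(ii)$ asserts nonnegativity of $L$.

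For $(iv)\Rightarrow(iii)$ I would invoke the Sparse Algebraic Positivstellensatz \Cref{thm:algPosSatz0infty}. Any $p\in\lin\cF$ with $p>0$ is a finite combination $\sum_{i=0}^{N} a_i x^{\alpha_i}$, so I would regard $\{x^{\alpha_0},\dots,x^{\alpha_N}\}$ as a finite T-system of order $N$ and apply \Cref{thm:algPosSatz0infty} to obtain the unique decomposition $p=p_*+p^*$, where $p_*$ and $p^*$ are precisely (positive multiples of) the determinantal generators of $(iv)$, built via the limiting construction of \Cref{rem:doublezeros}. Then $L(p)=L(p_*)+L(p^*)\geq 0$, which is $(iii)$. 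Finally, $(iii)\Rightarrow(ii)$ is the standard $+\varepsilon$ argument: for $p\in\lin\cF$ with $p\geq 0$ one has $p+\varepsilon\cdot 1\geq\varepsilon>0$ on $[0,\infty)$ (here $\alpha_0=0$ gives $1\in\lin\cF$), so $(iii)$ yields $L(p)+\varepsilon L(1)\geq 0$; since $1>0$ forces $L(1)\geq 0$, letting $\varepsilon\downarrow 0$ gives $L(p)\geq 0$.

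The real content is $(ii)\Rightarrow(i)$, and the plan is to show that $E:=\lin\cF$ is an adapted space of continuous functions on the locally compact Hausdorff space $X=[0,\infty)$ and then cite \cite[Thm.\ 1.8]{schmudMomentBook}. I would check the three defining properties. First, $E$ is spanned by $E_+:=\{f\in E\,|\,f\geq 0\ \text{on}\ X\}$, because every generator $x^{\alpha_i}$ is nonnegative on $[0,\infty)$. Second, $E_+$ has no common zero, since $1=x^{\alpha_0}\in E_+$ is strictly positive everywhere. Third, the domination condition: given $f=\sum_{i=0}^{N} a_i x^{\alpha_i}\in E_+$, I would pick any index $j>N$ (which exists because the strictly increasing sequence $\{\alpha_i\}$ is infinite, hence unbounded) and set $g:=C\cdot(1+x^{\alpha_j})\in E_+$ with $C>0$ large enough that $f\leq g$; then $f/g\to 0$ as $x\to\infty$ because the top exponent $\alpha_N$ of $f$ is strictly smaller than $\alpha_j$. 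With $E$ adapted and $L\geq 0$ on $E_+$ (which is exactly $(ii)$), \cite[Thm.\ 1.8]{schmudMomentBook} delivers a representing Radon measure on $[0,\infty)$, giving $(i)$.

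The only genuinely nontrivial point — the part I expect to be the main obstacle — is the adaptedness, and specifically the growth/domination axiom. On the compact intervals of \Cref{thm:generalSparseHausd} this step was vacuous, but on the noncompact half-line it is exactly where the structure of the exponent set must be used: domination at infinity succeeds precisely because $\{\alpha_i\}$ is unbounded, so that every finite combination is dominated by a strictly higher monomial already present in the system. As a fallback in case the adapted-space verification becomes delicate, I would keep in reserve the transport of the problem to $[0,1)$ via the homeomorphism $x\mapsto\tan(\pi x/2)$ used in the proof of \Cref{thm:karlin0infty}, reducing to \Cref{thm:generalSparseHausd}; but the direct adapted-space argument is shorter and avoids bookkeeping about how the exponents transform. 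Note that the statement claims only existence, not determinacy, so I would stop once the representing measure is produced.
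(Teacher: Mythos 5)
Your proposal is correct and follows essentially the same route as the paper: the cheap implications, \Cref{thm:algPosSatz0infty} for the equivalence with the determinantal generators, and Choquet's adapted-space theorem \cite[Thm.\ 1.8]{schmudMomentBook} for (ii) $\Rightarrow$ (i), with the same domination check $f(x)/x^{\alpha_j}\to 0$ for some $\alpha_j$ exceeding the top exponent of $f$. One small correction: an infinite strictly increasing sequence need not be unbounded, and the paper explicitly notes that $\alpha_i\to\infty$ is \emph{not} required here --- the domination axiom needs only that some strictly larger exponent exists in the system, which is what your argument actually uses.
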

\begin{proof}
The implications ``(i) $\Rightarrow$ (ii) $\Leftrightarrow$ (iii)'' are clear and ``(iii) $\Leftrightarrow$ (iv)'' is \Cref{thm:algPosSatz0infty}.
It is therefore sufficient to prove ``(ii) $\Leftarrow$ (i)''.

We have $\lin\cF = (\lin\cF)_+ - (\lin\cF)_+$, we have $1 = x^{\alpha_0}\in\lin\cF$, and for any $g = \sum_{i=0}^m a_i\cdot x^{\alpha_i} \in (\lin\cF)_+$ we have $\lim_{x\to\infty} \frac{g(x)}{x^{\alpha_{m+1}}} = 0$, i.e., there exists an $f\in (\lin\cF)_+$ which dominates $g$.
Hence, $\lin\cF$ is an adapted space and the assertion follows from \cite[Thm.\ 1.8]{schmudMomentBook}.
\end{proof}

Note, in the previous result we did needed $0=\alpha_0 < \alpha_1 < \alpha_2 < \dots$ but we did not need $\alpha_i\to\infty$.
Hence, \Cref{thm:sparseStieltjesMP} also includes the case $\sup_{i\in\nset_0}\alpha_i < \infty$.

\subsection{Sparse Positivstellensätze and Nichtnegativstellensätze on $\rset$}

\begin{thm}[{\cite{karlin63}} or e.g.\ {\cite[p.\ 198, Thm.\ 8.1]{karlinStuddenTSystemsBook}}]\label{thm:karlinR}
Let $m\in\nset_0$ and $\cF = \{f_i\}_{i=0}^{2m}$ be a continuous T-system of order $2m$ on $\rset$ such that
\begin{enumerate}[(a)]
\item there exists a $C>0$ such that $f_{2m}(x)>0$ for all $x\in (-\infty,-C]\cup [C,\infty)$,

\item $\displaystyle\lim_{|x|\to\infty} \frac{f_i(x)}{f_{2m}(x)} = 0$ for all $i=0,\dots,2m-1$, and

\item $\{f_i\}_{i=0}^{2m-1}$ is a continuous T-system of order $2m-1$ on $\rset$.
\end{enumerate}
Let $f = \sum_{i=0}^{2m} a_i f_i$ be such that $f>0$ on $\rset$ and $a_{2m}>0$.
Then there exists a unique representation
\[f = f_* + f^*\]
with $f_*,f^*\in\lin\cF$ and $f_*,f^*\geq 0$ on $\rset$ such that
\begin{enumerate}[(i)]
\item the coefficient of $f_{2m}$ in $f_*$ is $a_{2m}$, and

\item $f_*$ and $f^*$ are non-negative polynomials having zeros $\{x_i\}_{i=1}^m$ and $\{y_i\}_{i=1}^{m-1}$ with
\[-\infty < x_1 < y_1 < x_2 < \dots < y_{m-1} < x_m < \infty.\]
\end{enumerate}
\end{thm}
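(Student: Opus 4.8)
The plan is to transfer the real-line problem to a compact interval and mirror the proof of \Cref{thm:karlin0infty} as closely as possible. First I would use condition (a) to pick a continuous weight $w>0$ on $\rset$ with $f_{2m}(x)/w(x)\to 1$ as $|x|\to\infty$, set $v_i:=f_i/w$, and compose with an increasing homeomorphism $\phi:\rset\to(a,b)$ onto a bounded open interval (e.g.\ $\phi=\arctan$). By \Cref{exm:scaling} and the composition example, $\cG:=\{v_i\circ\phi^{-1}\}_{i=0}^{2m}$ is a continuous T-system of order $2m$ on $(a,b)$; by condition (b) it extends continuously to the closed interval with $g_i(a)=g_i(b)=\delta_{i,2m}$, and $\tilde f:=(f/w)\circ\phi^{-1}=\sum_{i=0}^{2m}a_ig_i\in\lin\cG$ extends to a function strictly positive on all of $[a,b]$ (at both endpoints it equals $\lim_{|x|\to\infty}f/w=a_{2m}>0$). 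Conceptually this is the one-point compactification $\rset\cup\{\infty\}\cong S^1$, so $\cF$ becomes a periodic T-system, and the fact that a periodic T-system must have even order (the corollary to \Cref{thm:msc}) is exactly consistent with the order being $2m$.

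The genuinely new point compared with the $[0,\infty)$ case is that \emph{both} ends of $\rset$ collapse to the single point at infinity, so the two endpoints $a,b$ carry identical data $g_i(a)=g_i(b)$. Consequently $\cG$ is a T-system on every compact subinterval of $(a,b)$ (\Cref{cor:restriction}) but \emph{not} on the full closed $[a,b]$, since a determinant using both endpoints has two equal rows; hence \Cref{cor:posabDecomp} cannot be quoted verbatim. I would therefore exhaust $(a,b)$ by compact subintervals $[a_k,b_k]$ with $a_k\downarrow a$, $b_k\uparrow b$, apply \Cref{cor:posabDecomp} to $\tilde f>0$ on each $[a_k,b_k]$ to obtain decompositions $\tilde f=g_*^{(k)}+g^{*(k)}$ with interlacing zero sets of index $2m$, and pass to a limit. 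Since $0\le g_*^{(k)}\le\tilde f$ is uniformly bounded, the coefficients of $g_*^{(k)}$ stay bounded (non-degeneracy of the T-system on a fixed compact subinterval), so after extracting a subsequence, as in the compactness argument in the proof of \Cref{thm:zeros2}, one gets $g_*^{(k)}\to g_*$ and $g^{*(k)}=\tilde f-g_*^{(k)}\to g^*$ with $g_*,g^*\ge 0$; transporting back by $w$ and $\phi$ yields $f_*,f^*$.

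To pin down the zero structure I would use a parity count rather than the (now unavailable) closed-interval boundary condition. On each $[a_k,b_k]$ the non-negative $g^{*(k)}$ has only non-nodal interior zeros (index $2$) and possibly endpoint zeros (index $1$); since the total index is $2m$, which is even, the number of endpoint zeros is even, and as $g^{*(k)}(b_k)=\tilde f(b_k)-g_*^{(k)}(b_k)=0$ is one of them, $g^{*(k)}$ must also vanish at $a_k$, leaving $m-1$ interior double zeros. Both endpoint zeros $a_k,b_k$ escape to $\infty$ in the limit, so $\tilde f^*(\infty)=0$; because $v_i(\infty)=\delta_{i,2m}$, this says precisely that $f^*$ has no $f_{2m}$-component, i.e.\ the coefficient of $f_{2m}$ in $f_*$ is $a_{2m}$, which is claim (i). Then $f^*\in\lin\{f_i\}_{i=0}^{2m-1}$, a T-system of order $2m-1$ by (c), so a non-negative $f^*\neq 0$ there has at most $m-1$ interior double zeros; the same parity argument applied to $f_*$ (which satisfies $f_*(b_k)>0$) forces zero endpoint zeros and hence exactly $m$ interior double zeros. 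Combined with the strict interlacing inherited from \Cref{cor:posabDecomp}, this gives the ordering $x_1<y_1<\dots<y_{m-1}<x_m$ of (ii), and uniqueness descends from the uniqueness in \Cref{cor:posabDecomp} (equivalently, $f_*$ is the largest non-negative minorant of $f$ in $\lin\cF$ with top coefficient $a_{2m}$).

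The step I expect to be the main obstacle is making the limit rigorous: I must show that as $[a_k,b_k]\nearrow(a,b)$ no interior zero of $g_*^{(k)}$ or $g^{*(k)}$ runs off to an endpoint and no spurious cancellation occurs, so that the limit retains exactly $m$ interior zeros for $f_*$ and exactly $m-1$ for $f^*$ with interlacing preserved, and so that $f_*$ stays strictly positive at $\infty$. In circle language this is the assertion that cutting the periodic system at $\infty$ destroys exactly the two index-$1$ endpoint zeros of $f^*$ (which merge at $\infty$) while leaving those of $f_*$ untouched; this is the precise mechanism behind the asymmetric counts $m$ versus $m-1$, and controlling it — rather than ending with a degenerate or boundary-trapped zero — is where the care is needed.
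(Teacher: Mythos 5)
Your overall route is the one the paper intends: the printed proof of this theorem is nothing more than the instruction to adapt the proof of \Cref{thm:karlin0infty} by sending the two ends of $[a,b]$ to $-\infty$ and $+\infty$, and you carry out exactly that reduction (weight $w$ from (a), $v_i=f_i/w$, homeomorphism onto a bounded interval). You have moreover spotted the genuine obstruction that the paper's one-line proof glosses over: since both endpoints of $[a,b]$ represent the single point $\infty$ of the one-point compactification, the transported family satisfies $g_i(a)=g_i(b)=\delta_{i,2m}$, any determinant using both endpoints has two equal rows, and so $\cG$ is \emph{not} a T-system on the closed interval; hence \Cref{cor:posabDecomp} cannot be quoted verbatim as it can in the $[0,\infty)$ case. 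Your remedy --- exhaust $(a,b)$ by compacts $[a_k,b_k]$, decompose there, and pass to the limit --- is sound, and your parity bookkeeping is correct: the even index $2m$ together with $g^{*(k)}(b_k)=0$ forces $g^{*(k)}(a_k)=0$ and rules out endpoint zeros of $g_*^{(k)}$, and in the limit $g^*(a)=g^*(b)=0$ yields claim (i) because $g^*(a)$ \emph{is} the $f_{2m}$-coefficient of $f^*$.

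The step you flag at the end as the ``main obstacle'' is, however, the actual content of the theorem and has to be closed rather than named; fortunately it closes with an observation you already have in hand. The strict interlacing from \Cref{cor:posabDecomp} together with $\tilde f>0$ on the \emph{closed} interval (note $\tilde f(a)=\tilde f(b)=a_{2m}>0$) prevents both merging and escape of zeros: if $x_i^{(k)}<x_{i+1}^{(k)}$ converged to a common point $\bar x$, the interlacing zero $y_i^{(k)}\in(x_i^{(k)},x_{i+1}^{(k)})$ of $g^{*(k)}$ would converge there too, giving $g_*(\bar x)=g^*(\bar x)=0$ and hence $\tilde f(\bar x)=0$, a contradiction; if $x_1^{(k)}\to a$ then $g_*(a)=0=g^*(a)$ gives the same contradiction. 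Uniform convergence on $[a,b]$ (which follows from the coefficient bound you describe) then shows $g_*$ has at least $m$ distinct interior non-nodal zeros, hence exactly $m$ by \Cref{cor:zeros1}, and likewise $g^*$ exactly $m-1$, with the interlacing surviving the limit. What remains genuinely missing is uniqueness: it does \emph{not} ``descend from the uniqueness in \Cref{cor:posabDecomp}'', because the decompositions on the different $[a_k,b_k]$ are pairwise distinct and a limit of non-unique approximants need not be unique. You need a separate argument --- either prove the extremal characterization of $f_*$ that you mention only in parentheses, or compare two admissible decompositions directly: by part (i) their difference $d=f_*-h_*=h^*-f^*$ lies in $\lin\{f_i\}_{i=0}^{2m-1}$ and satisfies $d\leq 0$ at the $m$ zeros of $f_*$ and $d\geq 0$ at the $m-1$ zeros of $f^*$, and one must then push this sign information (as Karlin--Studden do) to exceed the zero bound of \Cref{cor:zeros1} for the order-$(2m-1)$ system unless $d=0$. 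As written, the proposal is the right adaptation with the right diagnosis, but the limit control and the uniqueness are asserted rather than proved.
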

\begin{proof}
Adapt the proof of \Cref{thm:karlin0infty} such that both interval ends of $[a,b]$ are mapped to $-\infty$ and $+\infty$, respectively.
\end{proof}

We have already seen how from \Cref{thm:karlinab} and \Cref{cor:posabDecomp} we gained \Cref{thm:algPosSatzab} (sparse algebraic Positivstellensatz on $[a,b]$), \Cref{thm:algNichtNegab} (sparse algebraic Nichtnegativstellensatz on $[a,b]$), and Theorems \ref{thm:sparseHausd} and \ref{thm:generalSparseHausd} (sparse Hausdorff moment problems).
We have seen how from \Cref{thm:karlinab} and \Cref{cor:posabDecomp} we gained \Cref{thm:karlin0infty} (sparse Positivstellensatz for T-systems on $[0,\infty)$), \Cref{thm:algPosSatz0infty} (sparse algebraic Positivstellensatz on $[0,\infty)$), and \Cref{thm:sparseStieltjesMP} (sparse Stieltjes moment problem).
We will therefore not repeat this procedure for the case $K = \rset$ from \Cref{thm:karlinR} but summarize the procedure in the following ``cooking receipt''.

\begin{rem}[A General Cooking Receipt]\label{rem:cooking}
We have the following general \emph{cooking receipt} for generating sparse Positivstellensätze, Nichtnegativstellensätze, and to generate and solve sparse moment problems:
\begin{enumerate}[\quad (A)]
\item Use \Cref{thm:karlinab} or \Cref{cor:posabDecomp}, or extend these to sets $K= [a,b]\cup [c,d],\dots$ (for extensions see e.g.\ \cite{karlinStuddenTSystemsBook} and later literature on T-systems we did not discuss here).

\item Prove that your family $\cF = \{f_i\}_{i\in I}$ is a T-system (or even an ET-system).

\item Plug $\cF$ into (A) to get the sparse Positivstellensatz or sparse Nichtnegativestellensatz on $K$.

\item Show that $\lin\cF$ is an adapted space.

\item Combine (C) and (D) to a sparse moment problem (use \cite[Thm.\ 1.8]{schmudMomentBook} for an efficient proof).
\end{enumerate}
With this cooking receipt a large class of (sparse) moment problems, Nichtnegativestellensätze, and Positivstellensätze can be generated, solved, and efficiently proved.
We think this makes it very useful for applications and further theoretical investigations.
\exmsymbol
\end{rem}

\section{Summary}
\label{sec:summary}

In this work we review and deal with univariate sparse moment problems, Positivstellensätzen, and Nichtnegativestellensätzen.
We look at earlier results and then move to the theory of T-systems.
In the center are the works of Karlin \cite{karlin63} and Karlin and Studden \cite{karlinStuddenTSystemsBook}.

From \Cref{thm:karlinab} and \Cref{cor:posabDecomp} on $[a,b]$ we deduce a complete description of all strictly positive sparse algebraic polynomials in \Cref{thm:algPosSatzab}.
We also give the sparse algebraic Nichtnegativestellensatz on $[a,b]$ in \Cref{thm:algNichtNegab}. With these results we completely solve the sparse Hausdorff moment problem in \Cref{thm:sparseTruncHausd}, \Cref{thm:sparseHausd}, and for the most general form in \Cref{thm:generalSparseHausd}.

Following the extension by Karlin and Studden of \Cref{thm:karlinab} and \Cref{cor:posabDecomp} from $[a,b]$ to $[0,\infty)$ we formulate the corresponding sparse algebraic Positivstellensatz on $[0,\infty)$ in \Cref{thm:algPosSatz0infty}.
Only the sparse algebraic Positivstellensatz on $[0,\infty)$ is given since it already solves the sparse Stieltjes moment problem in \Cref{thm:sparseStieltjesMP}.
The sparse algebraic Nichtnegativstellensatz on $[0,\infty)$ can easily be derived like the sparse Nichtnegativestellensatz on $[a,b]$ in \Cref{thm:algNichtNegab}.

We also give the general T-system Positivstellensatz on $\rset$ by Karlin in \Cref{thm:karlinR}.
We give a general ``cooking receipt'' how other results in \cite{karlinStuddenTSystemsBook} (and later literature) can be used to generate additional sparse algebraic Positivstellensätze, Nichtnegativestellensätze, or to formulate and solve sparse moment problems.

In this treatment we see the high value of the results in \cite{karlinStuddenTSystemsBook} which are rarely used today.
Especially the analytic treatment of the algebraic questions seems unusual at first.
However, we hope we convinced the reader that this approach has at least in the univariate case (\Cref{thm:msc}) great value to gain sparse Positivstellensätze, sparse Nichtnegativstellensätze, and solutions to sparse moment problems.



\section*{Funding}

The author and this project are supported by the Deutsche Forschungs\-gemein\-schaft DFG with the grant DI-2780/2-1 and his research fellowship at the Zukunfs\-kolleg of the University of Konstanz, funded as part of the Excellence Strategy of the German Federal and State Government.


\providecommand{\bysame}{\leavevmode\hbox to3em{\hrulefill}\thinspace}
\providecommand{\MR}{\relax\ifhmode\unskip\space\fi MR }
\providecommand{\MRhref}[2]{%
  \href{http://www.ams.org/mathscinet-getitem?mr=#1}{#2}
}
\providecommand{\href}[2]{#2}

\end{document}